\numberwithin{equation}{section}
\def\bysame{\leavevmode\hbox to3em{\hrulefill}\thinspace}
\theoremstyle{plain}
\newtheorem{theorem}{Theorem}[section]
\newtheorem*{theorem*}{Theorem}
\newtheorem{corollary}[theorem]{Corollary}
\newtheorem{lemma}{Lemma}[section]
\newtheorem*{conjecture*}{Conjecture}
\newtheorem*{lemma*}{Lemma}
\theoremstyle{definition}
\newtheorem{definition}{Definition}[section]
\newtheorem*{definition*}{Definition}
\theoremstyle{remark}
\newtheorem{remark}{Remark}[section]
\newtheorem{example}{Example}[section]
\newtheorem{question}{Question}
\begin{document}

\date{} 
\title[A hierarchy for closed $n$-cell-complements]{A Hierarchy for Closed $n$-Cell-Complements}
\author{Robert J. Daverman and Shijie Gu}

\address{Department of Mathematics\\
University of Tennessee, Knoxville\\
TN, 37996}
\email{daverman@math.utk.edu}
\address{Department of Mathematical Sciences\\
University of Wisconsin, Milwaukee\\
WI, 53211}
\email{shijiegu@uwm.edu}

\thanks{2010 Mathematics Subject Classification: Primary 57N15; Secondary 57N16, 57N45, 57N50}

\begin{abstract}
Let $C$ and $D$ be a pair of crumpled $n$-cubes and 
$h$ a homeomorphism of $\text{Bd }C$ to $\text{Bd }D$ for which
there exists a map $f_h: C\to D$ such that $f_h|\text{Bd }C =h$ and
$f_{h}^{-1}(\text{Bd }D)=\text{Bd }C$. In our view the presence of such a triple   $(C,D,h)$ suggests that  $C$  is "at least as wild as" $D$.  The collection $\mathscr{W}_n$ of all such triples  is the subject of this paper. If $(C,D,h)\in \mathscr{W}_n$ but there is no homeomorphism such 
that $D$ is at least as wild as $C$, we say $C$ is "strictly wilder than" $D$. The latter concept imposes a partial order on the collection of
crumpled $n$-cubes.  Here we study features of these wildness comparisons, and we present certain attributes of crumpled cubes that are preserved by the maps arising when $(C,D,h) \in \mathscr{W}_n$.  The effort can be viewed as an initial way of classifying the wildness of crumpled cubes.
 
\end{abstract}
\maketitle


\newcommand\sfrac[2]{{#1/#2}}

\newcommand\cont{\operatorname{cont}}
\newcommand\diff{\operatorname{diff}}
\section{Introduction} 
The existence of wildly embedded spheres in the $n$-sphere $S^n$ has been
recognized since the 1920's, with the publication of the famous Alexander
Horned Sphere \cite{Alexander 1} and a related 2-sphere wildly embedded in $S^3$ presented by Antoine \cite{Antoine 1}. Later in the 20th century, there was an extensive study of
conditions under which an $(n-1)$-sphere in $S^n$ is locally flat and, hence,
standardly embedded.  Little has been done, however, to classify or
organize the rich variety of wildly embedded objects.  This paper strives
to initiate that organizational effort.

To that end, we consider triples $(C,D,h)$ consisting of a pair of crumpled $n$-cubes $C$
and $D$ and a homeomorphism $h$ from the boundary of the first to the boundary
of the second, and we name the subcollection $\mathscr{W}_n$ consisting of all such triples $(C,D,h)$ for
which there exists a map $f_h:C \to D$ extending $h$ such that $f_h^{-1}(\text{Bd }D)
= \text{Bd }C$.  We call $f_h$ a map associated with the triple.  Given $(C,D,h) \in \mathscr{W}_n$, we think of $C$ 
as being at least as wild as $D$. Of course, this wildness measure $\mathscr{W}_n$ depends heavily on the
homeomorphism $h$, so we regard $C$ as being at least as wild as $D$ provided
there is some homeomorphism $h$ for which $(C,D,h) \in \mathscr{W}_n$.

Several results established here offer justification for this measure as a
rating of wildness. For instance, when $(C,D,h) \in \mathscr{W}_n$, $f_h$ must induce an epimorphism
$\pi_1(\text{Int }C) \to \pi_1(\text{Int }D)$; any homotopy taming set for $C$ must be sent
to a homotopy taming set for $D$; as a consequence, the wild set of $D$ 
(that is, the set of points of  at which Bd $D$ fails to be locally collared in $D$) 
must lie in the image under $h$ of the wild
set of $C$.

Given $C$, we describe a standard flattening away from a closed subset $X$ of $\text{Bd }C$ that produces
a new crumpled $n$-cube $C_X$ and a homeomorphism $h_X:\text{Bd } C \to \text{Bd } C_X$ such that
$(C,C_X,h_X) \in \mathscr{W}_n$ and $\text{Bd }C_X$ is locally flat in $C_X$ at all points
of $h_X(\text{Bd }C - X) = h_X(\text{Bd }C) - X$. Moreover, when $X$ is the closure of an open subset of the
wild set for $C$, then $h_X(X) = X$ equals the wild set for $C_X$.  The standard
flattening technique furnishes an efficient method for presenting unusual
examples.

We also introduce a notion of "strictly wilder than", saying that a
crumpled $n$-cube $C$ is strictly wilder than another crumpled cube $D$ if there
exists a homeomorphism $h$ such that $(C,D,h) \in \mathscr{W}_n$ but there is no
homeomorphism $H:\text{Bd }D \to \text{Bd }C$ such that $(D,C,H) \in \mathscr{W}_n$.  We study
this partial order briefly in Section 5.  If the crumpled $n$-cube $C$
contains a spot at which its boundary is locally flat, or if $\text{Bd }C$ has
finitely generated fundamental group, we show that $C$ cannot be a maximal
element in this partial order; we suspect there are no maximal elements
whatsoever, but have been unable to confirm the suspicion. The preservation
of "at least as wild as" under different operations such as suspension and spin operations
is discussed in Section 4. In Section 6, the sewing space of crumpled cubes is shown to have some nice
properties whenever the "at least as wild as" condition prevails. 

Maps like $f_h$ have been used by Wang \cite{Wang 1} and others to impose a partial order on knots in $S^3$.


\section{Definitions and Notation}
The symbol $\text{Cl }A$ is used to denote the closure of $A$; the boundary and interior of $A$ are denoted as 
$\text{Bd }A$ and $\text{Int }A$; the symbol $\mathbbm{1}$ is the identity map.
\begin{definition} 
A $crumpled$ $n$-$cube$ $C$ is a space homeomorphic to the union of an ($n-1$)-sphere $\Sigma$ in $S^n$ and one of its 
complementary domains.  The sphere $\Sigma$ is the \textit{boundary of }$C$,
 written Bd $C$, and $C - \Sigma$ is the \textit{interior of }$C$, written Int $C$.
\end{definition}

\begin{definition} 
A \textit{closed} $n$-\textit{cell-complement} is a crumpled $n$-cube $C$ embedded  in $S^n$ so that $S^n - \text{Int }C$ is an $n$-cell.
\end{definition}

Every crumpled $n$-cube admits such an embedding \cite{Daverman 2}\cite{Daverman 3}\cite{Hosay 1}\cite{Lininger 1}. 
This concept arises because, when dealing with the possible wildness in $S^n$  of a compact subset of $\text{Bd }C$, it is useful to treat $C$ as a closed $n$-cell-complement, in order to preclude any wildness complications arising from the other crumpled cube, $S^n - \text{Int }C$.

\begin{definition} 
A subset $T$  of the boundary of a crumpled cube $C$ is a
\textit{homotopy taming set} for $C$ if every map 
$m:I^2 \to C$  can be approximated by a map  $m':I^2 \to C$  such that
$m'(I^2) \subset T \cup \text{Int }C$.
\end{definition}

Every crumpled $n$-cube has a 1-dimensional homotopy taming 
set \cite{Daverman 7}.
All crumpled 3-cubes have 0-dimensional homotopy taming sets; it is unknown whether the same is true for crumpled $n$-cubes when $n > 3$.

Crumpled cubes with particularly nice homotopy taming sets are referred to as follows:

\begin{definition} 
A crumpled $n$-cube $C$ is \textit{Type 1} if there exists a 0-dimensional
homotopy taming set $T$ in $\text{Bd }C$ such 
that $T$ is a countable union of Cantor sets that are tame relative to $\text{Bd }C$.
\end{definition}

\begin{definition} 
The \textit{inflation} of a crumpled $n$-cube $C$ is 
$$\text{Infl}(C,d)=\{\langle c,t \rangle\ \in C\times \mathbb{R}^1|c\in C \text{ and }
|t|\leq d(c)\},$$
where $d: C \rightarrow [0,1]$ is a map such that $d^{-1}(0)=\text{Bd }C$ 
\cite[P.270]{Daverman 1}.  Neither the homeomorphism type nor the embedding type of 
Infl$(C,d)$ depends on the choice of map $d$, so ordinarily we suppress reference to $d$. 
\end{definition}

\begin{definition} 
Let $C$ be a crumpled $n$-cube.  A point  $p \in \text{Bd }C$ is a \textit{piercing point} of $C$ if there 
exists an embedding $\xi$ of $C$ in the $n$-sphere $S^n$ such that $\xi(\text{Bd }C)$ can be pierced with a tame arc at 
$\xi(p)$.
\end{definition}

All boundary points of crumpled $n$-cubes are piercing points when $n > 3$.  McMillan 
\cite{McMillan 2} has shown that boundary points $p$ of crumpled 3-cubes $C$ are piercing points of $C$ if and only if $C - p$ is locally simply connected at $p$.

\begin{definition} 
A proper map $f: M \rightarrow \tilde{M}$ between connected, orientable $n$-manifolds has \textit{degree one} if $f$ induces an
isomorphisms of (cohomology groups with compact supports) $H^{n}_{c}(\tilde{M}) \to  H^{n}_{c}(M)$.
\end{definition}


\section{Some Basic Properties fof the Collection $\mathscr{W}_n$}
The fundamental aim here is the attempt to measure or compare the wildness of two given crumpled $n$-cubes using $\mathscr{W}_n$. 
Two obvious but basic features are worth noting: 
\begin{itemize}
\item[(1)] $(C,C, \mathbbm{1})\in \mathscr{W}_n$,
\item[(2)] $(C,C',h)$  and $(C',C'',h')$ in $\mathscr{W}_n$ implies $(C,C'',h'h)$ is in $\mathscr{W}_n$.
\end{itemize}

Loosely speaking, we think of $C$ as being wilder than $D$ if there exists 
$(C,D,h) \in \mathscr{W}_n$. This comparison, however, raises the following 
question: for $(C,D,h)$ in $\mathscr{W}_n$, does there ever exist 
$(D,C,H) \in \mathscr{W}_n$?  The basic feature that $(C,C,\mathbbm{1})\in \mathscr{W}_n$ supplies an affirmative answer and indicates that this 
"wilder than" language is misleading.  Accordingly, we phrase the concept more conservatively  as follows: 
 
\begin{definition} 
$C$ is \textit{at least as wild as} $D$ if and only if there exists a homeomorphism $h$ such that $(C,D,h)\in 
\mathscr{W}_n$.
\end{definition}

\begin{theorem} 
For every crumpled $n$-cube $C$ and every homeomorphism $h$ from $\mathrm{Bd }C$ to $\mathrm{Bd }B^n$,
$(C,B^n,h)\in \mathscr{W}_n$.
\end{theorem}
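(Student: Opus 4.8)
The plan is to verify the definition of $\mathscr{W}_n$ directly, by constructing a map $f_h\colon C\to B^n$ with $f_h|\mathrm{Bd}\,C=h$ and $f_h^{-1}(\mathrm{Bd}\,B^n)=\mathrm{Bd}\,C$; the second condition says precisely that $f_h$ extends $h$ while carrying $\mathrm{Int}\,C$ into $\mathrm{Int}\,B^n$. The idea is to exploit that $B^n$ is the cone on its boundary sphere: one spreads a thin neighborhood of $\mathrm{Bd}\,C$ outward across the cone coordinate so as to recover $h$ on $\mathrm{Bd}\,C$, and then collapses the entire remaining bulk of $C$ onto the single cone point. Because the cone point lies in $\mathrm{Int}\,B^n$, this collapse is invisible to the preimage condition.

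Three ingredients are needed. First, I write $B^n$ via $\pi\colon \mathrm{Bd}\,B^n\times[0,1]\to B^n$, the quotient collapsing $\mathrm{Bd}\,B^n\times\{1\}$ to the center $b_0$, so that $\pi(\,\cdot\,,0)$ is the identity on $\mathrm{Bd}\,B^n$ and $\pi(\mathrm{Bd}\,B^n\times(0,1])\subset\mathrm{Int}\,B^n$. Second, I need a retraction $r\colon U\to\mathrm{Bd}\,C$ of some neighborhood $U$ of $\mathrm{Bd}\,C$ in $C$. This is available because $\mathrm{Bd}\,C$ is homeomorphic to $S^{n-1}$, hence is an ANR, and sits as a closed subset of the compact metric space $C$; an ANR embedded as a closed subset of a metric space is always a neighborhood retract. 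I emphasize that I do not claim $C$ itself to be locally nice at its wild boundary points, only that its boundary sphere is an ANR. Third, I take a map $d\colon C\to[0,1]$ with $d^{-1}(0)=\mathrm{Bd}\,C$ and with $d\equiv 1$ off a neighborhood $V$ of $\mathrm{Bd}\,C$ whose closure lies in $U$; a truncated, normalized distance function to $\mathrm{Bd}\,C$ — an inflation-type function — does the job.

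With these in hand I will define
\[
f_h(x)=\pi\bigl(h(r(x)),\,d(x)\bigr)\quad\text{when } d(x)<1,\qquad f_h(x)=b_0\quad\text{when } d(x)=1 .
\]
On the open set $\{d<1\}\subset U$ this is a composition of continuous maps, and continuity across the collapse locus $\{d=1\}$ holds because $\pi(y,t)\to b_0$ as $t\to1$ uniformly in $y$, so the $\mathrm{Bd}\,C$-coordinate is irrelevant there. On $\mathrm{Bd}\,C$ one has $r=\mathbbm{1}$ and $d=0$, whence $f_h|\mathrm{Bd}\,C=\pi(h(\,\cdot\,),0)=h$. Finally $f_h(x)\in\mathrm{Bd}\,B^n=\pi(\mathrm{Bd}\,B^n\times\{0\})$ forces $d(x)=0$, i.e.\ $x\in\mathrm{Bd}\,C$, while every $x\in\mathrm{Int}\,C$ has $d(x)>0$ and so maps into $\mathrm{Int}\,B^n$; thus $f_h^{-1}(\mathrm{Bd}\,B^n)=\mathrm{Bd}\,C$, placing $(C,B^n,h)$ in $\mathscr{W}_n$.

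The main obstacle is the temptation to retract all of $C$ onto $\mathrm{Bd}\,C$ and then cone, which is impossible: $C$ may be homotopy equivalent to a ball, so the no-retraction theorem rules out any global retraction $C\to\mathrm{Bd}\,C$. The essential point is that only a neighborhood retraction is used, the rest of $C$ being discharged onto the interior cone point; the two delicate verifications are continuity across $\{d=1\}$ and the exactness of $f_h^{-1}(\mathrm{Bd}\,B^n)=\mathrm{Bd}\,C$. I note in passing that this theorem exhibits $B^n$ as a least element of the "at least as wild as" ordering, consistent with the independently provable fact that every sewing $C\cup_h B^n$ is homeomorphic to $S^n$ — obtained by realizing $C$ as a closed $n$-cell-complement via some $h_0$ and extending the boundary self-homeomorphism $h\circ h_0^{-1}$ over $B^n$ by the Alexander trick — though that sewing result is not required for the construction above.
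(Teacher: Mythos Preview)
Your proof is correct and follows essentially the same approach as the paper: combine a distance-to-boundary function with the radial (cone) structure of $B^n$ to force $\mathrm{Int}\,C$ into $\mathrm{Int}\,B^n$. The paper's version is marginally shorter, using the AR property of $B^n$ to extend $h$ globally to a map $f_h\colon C\to B^n$ and then replacing $f_h(x)$ by the scalar multiple $\bigl(1-\mathrm{dist}(x,\mathrm{Bd}\,C)\bigr)\cdot f_h(x)$ (with the metric normalized so $\mathrm{diam}\,C\le 1$), whereas you obtain the angular data from an ANR neighborhood retraction onto $\mathrm{Bd}\,C$ and collapse the remainder to the cone point; the underlying idea is the same.
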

\begin{proof}
Since $B^n$ is an Absolute Retract, the homeomorphism  $h$  
extends to a map 
$f_h:C\rightarrow B^n$. Think of $B^n$ as the unit ball. Restrict the metric on $C$ so $C$ has diameter $\leq 1$. Treat 
$f_h(x)$ as a vector from the origin through the image point $f_h(x)$. Modify $f_h$ by sending  any  $x$ in $C$ to the 
vector $\big(1 -\text{dist}(x,\text{Bd }C)\big) \cdot f_h(x)$ (i.e. scalar product).  Now $f_h^{-1}(\text{Bd }B^n) = \text{Bd }C.$
\end{proof}

\begin{theorem}
Suppose $C$ and $D$ are crumpled $n$-cubes such that $\mathrm{Bd}$ $C$ and $\mathrm{Bd}$ $D$ have closed  neighborhoods
$U_C, U_D$ in $C,D$, respectively, that are homeomorphic via $H: U_C \to U_D$, and suppose $\pi_1(\mathrm{Int}$ $D)=1$. Then $(C,D, H|\mathrm{Bd}$ $C)\in \mathscr{W}_n$.
\end{theorem}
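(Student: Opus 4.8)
The plan is to construct an associated map $f\colon C\to D$ by using $H$ verbatim on the given neighborhood $U_C$ of $\mathrm{Bd}\,C$ and then filling in the remaining core of $C$ by a map into $\mathrm{Int}\,D$; the entire argument reduces to showing that $\mathrm{Int}\,D$ is an absolute retract, after which everything else is routine pasting.

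First I would record the topological naturality of the boundary. A point of a crumpled $n$-cube has a neighborhood homeomorphic to $\mathbb{R}^n$ precisely when it lies in the interior, since invariance of domain forbids such a neighborhood at a boundary point (the opposite complementary domain would have to be included). Consequently the homeomorphism $H$ must carry $\mathrm{Bd}\,C$ onto $\mathrm{Bd}\,D$ and $\mathrm{Int}\,C\cap U_C$ onto $\mathrm{Int}\,D\cap U_D$; in particular $h:=H|\mathrm{Bd}\,C$ is a homeomorphism of $\mathrm{Bd}\,C$ onto $\mathrm{Bd}\,D$, and the frontier $\partial U_C$ of $U_C$ in $C$, a compact subset of $\mathrm{Int}\,C$, is sent by $H$ into $\mathrm{Int}\,D$.

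The crucial step is to show $\mathrm{Int}\,D$ is an AR, and I expect this to be the heart of the matter. Since $\mathrm{Int}\,D$ is homeomorphic to a complementary domain of the $(n-1)$-sphere $\mathrm{Bd}\,D$ in $S^n$, Alexander duality applied to $\mathrm{Bd}\,D\cong S^{n-1}$ gives $\tilde H_i(S^n-\mathrm{Bd}\,D)\cong\tilde{\check H}^{\,n-i-1}(S^{n-1})$, which is $\mathbb{Z}$ for $i=0$ and vanishes for $i\geq 1$; as $S^n-\mathrm{Bd}\,D$ is the disjoint union of its two domains, each domain, and in particular $\mathrm{Int}\,D$, is acyclic. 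Combining this with the hypothesis $\pi_1(\mathrm{Int}\,D)=1$, the Hurewicz theorem forces all homotopy groups of $\mathrm{Int}\,D$ to vanish; being an open $n$-manifold, $\mathrm{Int}\,D$ is an ANR with the homotopy type of a CW complex, so Whitehead's theorem renders it contractible, and a contractible ANR is an AR. The point worth stressing is that acyclicity is automatic from duality, so the lone hypothesis doing real work is $\pi_1(\mathrm{Int}\,D)=1$.

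Finally I would assemble $f$. Set $V_C=\mathrm{Cl}(C-U_C)$, a compact metric space in which $\partial U_C$ is closed. Because $\mathrm{Int}\,D$ is an AR, the map $H|\partial U_C\colon \partial U_C\to \mathrm{Int}\,D$ extends to a map $g\colon V_C\to \mathrm{Int}\,D$. Define $f=H$ on $U_C$ and $f=g$ on $V_C$; the two agree on $U_C\cap V_C=\partial U_C$, so the pasting lemma produces a continuous $f\colon C\to D$ with $f|\mathrm{Bd}\,C=h$. Since $H$ is a homeomorphism of $U_C$ onto $U_D$ sending only $\mathrm{Bd}\,C$ to $\mathrm{Bd}\,D$, while $g$ takes $V_C$ into $\mathrm{Int}\,D$, we obtain $f^{-1}(\mathrm{Bd}\,D)=\mathrm{Bd}\,C$, so $f$ is an associated map and $(C,D,h)\in\mathscr{W}_n$. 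The decisive feature is that the AR extension lands automatically in $\mathrm{Int}\,D$, which is exactly what secures the condition $f^{-1}(\mathrm{Bd}\,D)=\mathrm{Bd}\,C$.
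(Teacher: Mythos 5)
Your core construction is the same as the paper's: the paper observes that $\mathrm{Int}\,D$ is homologically and homotopically trivial and extends $H$ over $C-U_C$ by a map into $\mathrm{Int}\,D$; your Alexander duality/Hurewicz/Whitehead computation, the resulting AR property of $\mathrm{Int}\,D$, and the pasting along the frontier of $U_C$ are exactly that argument with the details written out, and those steps are correct.

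The one step that fails is the opening claim that $H$ \emph{must} carry $\mathrm{Bd}\,C$ onto $\mathrm{Bd}\,D$ by invariance of domain. Your argument distinguishes points by whether they have neighborhoods homeomorphic to $\mathbb{R}^n$, but the only neighborhoods available are those inside $U_C$ and $U_D$, and the frontier points of $U_C$ in $C$ typically fail to have Euclidean neighborhoods in $U_C$ just as the points of $\mathrm{Bd}\,C$ do; invariance of domain therefore cannot tell these two kinds of non-manifold points of $U_C$ apart, and $H$ is free to interchange them. Concretely, let $C=D=B^n$, let $U_C=U_D$ be the closed collar $\{x : \tfrac{1}{2}\le |x|\le 1\}\cong S^{n-1}\times[0,1]$, and let $H$ be the end-swapping homeomorphism $(x,t)\mapsto(x,1-t)$; then $H$ sends $\mathrm{Bd}\,C$ onto the frontier sphere $\{x : |x|=\tfrac{1}{2}\}$, which lies in $\mathrm{Int}\,D$. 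So boundary preservation is not a consequence of the stated hypotheses; it must be read as an implicit hypothesis, as the paper tacitly does --- indeed the conclusion $(C,D,H|\mathrm{Bd}\,C)\in\mathscr{W}_n$ does not even parse otherwise, since membership in $\mathscr{W}_n$ requires a homeomorphism $\mathrm{Bd}\,C\to\mathrm{Bd}\,D$. Once you adopt that reading, the fact you actually need --- that $H$ sends the frontier of $U_C$ into $\mathrm{Int}\,D$ --- follows at once from bijectivity: $H(\mathrm{Bd}\,C)=\mathrm{Bd}\,D$ forces $H(U_C-\mathrm{Bd}\,C)=U_D-\mathrm{Bd}\,D\subset\mathrm{Int}\,D$. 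With that repair, the remainder of your proof is complete and coincides with the paper's.
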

\begin{proof}
Here $\text{Int }D$ is homologically and homotopically trivial, implying that $H$ extends to a map $f_H:C \to D$ with $f_H(C - U_C) \subset \text{Int }D.$ Clearly $f_H$ assures that $(C,D,H|\text{Bd }C) \in \mathscr{W}_n$.
\end{proof}

Next we outline an example showing the existence of $(C,C,h) \in \mathscr{W}_n$ where no associated map $f_h:C \to C$ can be a homeomorphism. It is meant to suggest that a reflexivity aspect of the "at least as wild as" relation occasionally holds for complicated reasons.  In the next section we will present an example showing that the relation actually fails to be asymmetric and hence does not determine a partial order on the collection of crumpled $n$-cubes. 

\begin{example} 
A triple $(C,C,h) \in \mathscr{W}_3$ such that every associated map $f_h:C \to C$ induces a homomorphism $\pi_1(\text{Int }C) \to \pi_1(\text{Int }C)$ with nontrivial kernel. Consider the closed 3-cell-complement $C$ bounded by Alexander's horned sphere. Wipe out the wildness in one of the two primary horns, but leave 
the other horn unchanged.  The result is a new crumpled 3-cube $D$, simpler than the original in that some of the 
wildness has been eliminated, but nevertheless (by inspection) embedded exactly like the original.  We continue to differentiate the two using  different names, despite the fact that $C$ and $D$ are equivalent. There is a homeomorphism $h$ from 
$\text{Bd } C$ to $\text{Bd } D$  sending one of the primary horns of $C$ onto the wild horn of $D$ (the entire wild part 
of $D$) and sending the other primary horn of $C$ into the flattened part of $D$. It takes a little work to show that
$h$ extends to the appropriate kind of map from $C$ to $D$, but that is quite like showing that $(C,B^n,h) \in 
\mathscr{W}_n$. Let $J \subset \text{Bd }C$ be a simple closed curve separating the two horns of $C$, and note that any associated map $f_h$ must send loops in $\text{Int }C$ near $J$ to homotopically inessential loops in $\text{Int }D$.  
\end{example}

\begin{lemma} 
Suppose $(C,D,h) \in \mathscr{W}_n$, $f_h:C \to D$ is a map
associated with $h$, $W$ is a connected open subset of $D$  such that $W \cap \mathrm{Bd}$ $D$ is connected, 
$Y$ is the component of $f_h^{-1}(W)$ containing
$f_h^{-1}(W \cap \mathrm{Bd}$ $D) = h^{-1}(W \cap \mathrm{Bd}$ $D)$.  Then $f_h$ induces an
epimorphism $\pi_1(Y \cap \mathrm{Int}$ $C) \to \pi_1(W \cap \mathrm{Int}$ $D)$.
\end{lemma}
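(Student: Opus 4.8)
The plan is to prove that $g := f_h|(Y\cap\mathrm{Int}\,C)\colon Y\cap\mathrm{Int}\,C \to W\cap\mathrm{Int}\,D$ is a proper map of degree $\pm 1$ between orientable open $n$-manifolds, and then to deduce the $\pi_1$-surjectivity from the fact that a proper map of nonzero degree is $\pi_1$-onto. Since $f_h^{-1}(\mathrm{Bd}\,D)=\mathrm{Bd}\,C$, the map $f_h$ carries $\mathrm{Int}\,C$ into $\mathrm{Int}\,D$, so $g$ is well defined; and because $Y$ is open and closed in the open set $f_h^{-1}(W)$, for compact $K\subset W\cap\mathrm{Int}\,D$ the set $g^{-1}(K)=f_h^{-1}(K)\cap Y$ is closed in the compact set $f_h^{-1}(K)$ and lies in $\mathrm{Int}\,C$, so $g$ is proper. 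First I would establish the degree, then localize it to $Y$.

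To see that $f_h$ has degree $\pm1$, I would use that a crumpled $n$-cube satisfies $H_n(C)=H_{n-1}(C)=0$ (a Mayer--Vietoris computation in $S^n$), so that the connecting homomorphism $H_n(C,\mathrm{Bd}\,C)\to H_{n-1}(\mathrm{Bd}\,C)$ is an isomorphism $\mathbb Z\to\mathbb Z$, and similarly for $D$. Naturality of the long exact sequences of the two pairs, together with $f_h|\mathrm{Bd}\,C=h$ inducing $\pm1$ on $H_{n-1}$ of the boundary spheres, then forces the pair degree to be $\pm1$. Transferring this to the proper degree of $f_h|\mathrm{Int}\,C\colon\mathrm{Int}\,C\to\mathrm{Int}\,D$ is where the absence of a boundary collar must be handled with care, since the wildness of $\mathrm{Bd}\,C$ blocks a naive appeal to Lefschetz duality for a manifold with boundary.

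The crucial step is to show that this degree is concentrated on $Y$. By additivity of local degree over the components of $f_h^{-1}(W)$, the degree of $f_h|\mathrm{Int}\,C$ over any component of $W\cap\mathrm{Int}\,D$ equals $\sum_V \deg\!\big(f_h|(V\cap\mathrm{Int}\,C)\big)$, the sum taken over components $V$ of $f_h^{-1}(W)$. Here the connectivity of $W\cap\mathrm{Bd}\,D$ enters decisively: all of $h^{-1}(W\cap\mathrm{Bd}\,D)$ lies in the single component $Y$, so every other component $V$ has $V\cap\mathrm{Bd}\,C=\emptyset$ and meets $\mathrm{Bd}\,C$ only along $f_h^{-1}(\mathrm{Fr}\,W)$, whence $\overline{f_h(V)}$ is disjoint from $W\cap\mathrm{Bd}\,D$. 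If such a $V$ had nonzero degree over some component of $W\cap\mathrm{Int}\,D$, it would map properly onto that component, whose closure meets $W\cap\mathrm{Bd}\,D$ because $W$ is connected; this would force points of $W\cap\mathrm{Bd}\,D$ into $\overline{f_h(V)}$, a contradiction. Therefore $\deg(g)=\pm1$.

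With $g$ proper of degree $\pm1$, I would finish by the standard covering-space argument: were $g_*\colon\pi_1(Y\cap\mathrm{Int}\,C)\to\pi_1(W\cap\mathrm{Int}\,D)$ not onto, $g$ would lift to the connected cover $p\colon\widehat W\to W\cap\mathrm{Int}\,D$ determined by the image subgroup, producing a proper $\widehat g$ with $g=p\circ\widehat g$; comparing signed preimage counts of a value with those of its lifts yields $[\,\pi_1:\,g_*\pi_1\,]\cdot\deg\widehat g=\pm1$, forcing the index to be $1$. I expect the main obstacle to be the degree bookkeeping rather than this endgame: one must make the localization rigorous without a collar, and must accommodate the possibility that $W\cap\mathrm{Int}\,D$ and $Y\cap\mathrm{Int}\,C$ are disconnected (which the hypotheses do permit), passing throughout to the components carrying the base points. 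In every instance it is precisely the connectivity of $W\cap\mathrm{Bd}\,D$ --- trapping the full boundary preimage inside $Y$ --- that channels the entire degree $\pm1$ into $Y$ and makes the comparison go through.
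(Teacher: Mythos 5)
Your overall strategy---degree $\pm 1$, localization of degree to $Y$, then the covering-space criterion that proper degree-one maps are $\pi_1$-surjective---reaches the right conclusion and rests on the same pillars as the paper, but you arrive at degree one by a genuinely different route. The paper dissolves every boundary-regularity worry with one trick: treat $C$ and $D$ as closed $n$-cell-complements, extend $f_h$ across the complementary $n$-cells (cone the homeomorphism $h$) to a map $F_h:S^n\to S^n$ restricting to a homeomorphism of $S^n-\mathrm{Int}\,C$ onto $S^n-\mathrm{Int}\,D$; being a homeomorphism over an open set, $F_h$ has degree one, and the results of \cite{Epstein 1} then deliver exactly the two things you labor over, namely that the degree localizes to $F_h|:Y\cap\mathrm{Int}\,C\to W\cap\mathrm{Int}\,D$ and that proper degree-one maps induce $\pi_1$-epimorphisms. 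Your intrinsic replacement---$H_n(C,\mathrm{Bd}\,C)\cong H_{n-1}(\mathrm{Bd}\,C)\cong\mathbb{Z}$ by acyclicity of crumpled cubes plus naturality of the long exact sequences---is correct (indeed a closed $n$-cell-complement is a deformation retract of $S^n$ minus an exterior point, hence contractible), and your localization step, using connectivity of $W\cap\mathrm{Bd}\,D$ to trap the entire boundary preimage in $Y$ and a limit-point argument to show any other component of $f_h^{-1}(W)$ has image closure missing $W\cap\mathrm{Bd}\,D$, is a correct expansion of what the paper leaves entirely to the citation. What the paper's route buys is brevity and the avoidance of any homological analysis of the non-manifold pair $(C,\mathrm{Bd}\,C)$; what yours buys is independence from the (nontrivial) re-embedding theorems making $C$ and $D$ closed $n$-cell-complements.

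The one step you flag but do not execute---passing from the pair degree on $H_n(C,\mathrm{Bd}\,C)$ to the proper degree of $f_h|\mathrm{Int}\,C$---is, as written, a real hole: $C$ is not a manifold with boundary (a manifold with boundary has collared boundary, which is precisely what wildness denies), so Lefschetz duality is indeed unavailable. It is repairable without the paper's trick: for a compact pair one has $H^n_c(\mathrm{Int}\,C)\cong\check{H}^n(C,\mathrm{Bd}\,C)$ (Alexander--Spanier cohomology with compact supports of the complement of a closed subset), this isomorphism is natural for maps of pairs satisfying $f_h^{-1}(\mathrm{Bd}\,D)=\mathrm{Bd}\,C$---the same hypothesis that makes $f_h|\mathrm{Int}\,C$ proper---and \v{C}ech agrees with singular cohomology here because crumpled cubes and spheres are ANRs. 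One further soft spot, which your proof shares with the paper's own terse one: when $Y\cap\mathrm{Int}\,C$ or $W\cap\mathrm{Int}\,D$ is disconnected, the degree over a component of $W\cap\mathrm{Int}\,D$ is a sum over the components of $Y\cap\mathrm{Int}\,C$ mapping into it, and the covering-space endgame needs the basepoint component alone to carry the full degree $\pm1$; saying ``pass to the components carrying the base points'' does not by itself settle that bookkeeping.
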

\begin{proof}
Treat $C$ and $D$ as closed $n$-cell-complements. 
Extend $f_h$ to a proper map $F_h:S^n \to
S^n$ that restricts to a homeomorphism between $S^n - \text{Int }C$ and $S^n - \text{Int }D$.  Being a homemorphism 
over some open subset, $F_h$ must have geometric degree 1.  Then by \cite{Epstein 1}
$$f_h|Y \cap \text{Int }C = F_h|Y \cap \text{Int }C: Y \cap \text{Int }C \to W \cap \text{Int }D$$
also has degree 1, which implies that it induces an epimorphism of
fundamental groups.
\end{proof}

\begin{corollary} 
If $(C,D,h) \in \mathscr{W}_n$  and  $f_h:C\rightarrow D$ is an associated map
extending $h$  with $f_h(\mathrm{Int}$ $C) \subset \mathrm{Int}$ $D$, then
$f_h$ induces an epimorphism  of $\pi_1(\mathrm{Int}$ $C)$ to $\pi_1(\mathrm{Int}$ $D)$.
\end{corollary}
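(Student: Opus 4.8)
The plan is to obtain this as the special case $W = D$ of the preceding Lemma. First I would check that $W = D$ meets that Lemma's hypotheses. The whole space $D$ is trivially a connected open subset of $D$, and $W \cap \mathrm{Bd}\,D = \mathrm{Bd}\,D$ is an $(n-1)$-sphere, hence connected for $n \geq 2$. With this choice $f_h^{-1}(W) = f_h^{-1}(D) = C$, and since a crumpled cube is connected, the component $Y$ of $f_h^{-1}(W)$ containing $f_h^{-1}(\mathrm{Bd}\,D) = \mathrm{Bd}\,C$ is all of $C$.

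Next I would simply read off the conclusion. Because $Y = C$ gives $Y \cap \mathrm{Int}\,C = \mathrm{Int}\,C$, and $W = D$ gives $W \cap \mathrm{Int}\,D = \mathrm{Int}\,D$, the Lemma produces precisely an epimorphism $\pi_1(\mathrm{Int}\,C) \to \pi_1(\mathrm{Int}\,D)$. I would also observe that the stated hypothesis $f_h(\mathrm{Int}\,C) \subset \mathrm{Int}\,D$ is in fact automatic: the defining condition $f_h^{-1}(\mathrm{Bd}\,D) = \mathrm{Bd}\,C$ forces every point of $\mathrm{Int}\,C = C - \mathrm{Bd}\,C$ to map outside $\mathrm{Bd}\,D$, hence into $\mathrm{Int}\,D$. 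Thus that hypothesis plays no essential role beyond recording that $f_h$ restricts to a map of interiors, and it requires no separate verification.

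Since the corollary is a direct specialization, I do not expect any genuine obstacle. The only points needing a word of care are the connectedness of $\mathrm{Bd}\,D$ (an $(n-1)$-sphere, connected when $n \geq 2$) and of $C$ itself, both of which guarantee that the Lemma applies with $Y = C$ and $W \cap \mathrm{Bd}\,D$ connected; everything else, in particular the degree-one surjectivity on fundamental groups, is inherited from the argument already carried out in the Lemma.
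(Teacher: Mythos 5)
Your proposal is correct and matches the paper's proof exactly: the paper's entire argument is ``Apply Lemma 3.1 with $W = D$ and $Y = C$,'' and your verification of the Lemma's hypotheses (connectedness of $\mathrm{Bd}\,D$ and of $C$) together with the observation that $f_h(\mathrm{Int}\,C) \subset \mathrm{Int}\,D$ is automatic from $f_h^{-1}(\mathrm{Bd}\,D) = \mathrm{Bd}\,C$ simply makes explicit what the paper leaves to the reader.
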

\begin{proof}
Apply Lemma 3.1 with $W = D$ and $Y = C$.
\end{proof}

\begin{corollary} 
If $(C, D, h)\in \mathscr{W}_n$ and $\mathrm{Int}$ $C$ is an open 
$n$-cell, then $\mathrm{Int}$ $D$ is an open $n$-cell.
\end{corollary}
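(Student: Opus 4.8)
The plan is to run the fundamental-group obstruction through Corollary 3.1 and then to recognize the resulting acyclic, simply connected complementary domain as an open cell. First I would observe that the hypothesis $f_h^{-1}(\mathrm{Bd}\,D)=\mathrm{Bd}\,C$ built into membership in $\mathscr{W}_n$ automatically forces $f_h(\mathrm{Int}\,C)\subset\mathrm{Int}\,D$ for any associated map: a point of $\mathrm{Int}\,C$ lies off $\mathrm{Bd}\,C=f_h^{-1}(\mathrm{Bd}\,D)$, so its image misses $\mathrm{Bd}\,D$ while remaining in $D$. Thus Corollary 3.1 applies verbatim, and $f_h$ induces an epimorphism $\pi_1(\mathrm{Int}\,C)\to\pi_1(\mathrm{Int}\,D)$. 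Since $\mathrm{Int}\,C$ is an open $n$-cell, its fundamental group is trivial, and the epimorphism forces $\pi_1(\mathrm{Int}\,D)=1$.

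Next I would upgrade simple connectivity to contractibility. Writing $\Sigma=\mathrm{Bd}\,D$ and regarding $D$ as a closed $n$-cell-complement, $\mathrm{Int}\,D$ is one of the two complementary domains of the $(n-1)$-sphere $\Sigma$ in $S^n$. Alexander duality computes $\tilde H_\ast(S^n-\Sigma)$ from $\tilde H^\ast(\Sigma)$, yielding exactly two components with all reduced homology of $S^n-\Sigma$ concentrated in degree $0$; hence each complementary domain, in particular $\mathrm{Int}\,D$, is acyclic. A simply connected acyclic open manifold is weakly contractible and, having the homotopy type of a CW complex, is contractible.

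The final and genuinely substantive step is to pass from "contractible complementary domain of a sphere" to "open $n$-cell." Here I would exploit the closed $n$-cell-complement picture: $B=S^n-\mathrm{Int}\,D$ is an $n$-cell whose complement $S^n-B=\mathrm{Int}\,D$ is now known to be simply connected, and the goal is to show $B$ is cellular, for then collapsing $B$ to a point gives $S^n/B\cong S^n$ and exhibits $\mathrm{Int}\,D=S^n-B$ as an open $n$-cell. The cellularity would be obtained from McMillan's cellularity criterion together with the established triviality of $\pi_1(S^n-B)$, or, more efficiently, by quoting the recognition theorem that the interior of a crumpled $n$-cube is an open $n$-cell precisely when it is simply connected.

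I expect this last step to be the main obstacle, both because translating global simple connectivity of the complement into the local cellularity criterion (small loops near $\Sigma$ bounding in only slightly larger complements) requires care, and because recognizing a contractible open manifold as Euclidean space is dimension-sensitive; the cleanest write-up would lean on engulfing-type arguments in high dimensions and on the resolved low-dimensional recognition problems, so rather than reprove it I would cite the appropriate characterization from the crumpled-cube literature.
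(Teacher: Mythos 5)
Your first two steps are fine: the condition $f_h^{-1}(\mathrm{Bd}\,D)=\mathrm{Bd}\,C$ does force $f_h(\mathrm{Int}\,C)\subset\mathrm{Int}\,D$, so Corollary 3.1 applies and gives $\pi_1(\mathrm{Int}\,D)=1$, and Alexander duality then makes $\mathrm{Int}\,D$ contractible. The gap is the final step, and it is not merely a matter of "care": global simple connectivity (even contractibility) of the complement of a compactum does \emph{not} imply McMillan's cellularity criterion, and the "recognition theorem" you propose to quote --- that the interior of a crumpled $n$-cube is an open $n$-cell precisely when it is simply connected --- is not the theorem that exists. The theorem the paper actually cites from McMillan characterizes open-cell interiors by simple connectivity \emph{at infinity}: for every neighborhood $U$ of $\mathrm{Bd}\,D$ in $D$ there must be a smaller neighborhood $V$ such that each loop in $V\cap\mathrm{Int}\,D$ is null-homotopic in $U\cap\mathrm{Int}\,D$. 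That is a condition on the end of $\mathrm{Int}\,D$, strictly stronger than $\pi_1(\mathrm{Int}\,D)=1$; the Whitehead continuum $Wh\subset S^3$ (cell-like, with $S^3-Wh$ the contractible Whitehead manifold, yet not cellular) is the standard witness that a contractible complement can fail the cellularity criterion --- loops close to the compactum may bound only by sweeping far away from it, and nothing in your argument rules this out for the cell $B=S^n-\mathrm{Int}\,D$.

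A structural symptom that something is missing: after invoking Corollary 3.1 once, your argument never uses $f_h$ or $C$ again, so if it worked it would prove that \emph{every} crumpled $n$-cube with simply connected interior has an open-cell interior --- a far stronger statement which would make the at-infinity hypothesis in McMillan's theorem, and the paper's own careful proof, pointless. What the paper does instead is verify 1-connectivity at infinity of $\mathrm{Int}\,D$ by pulling back through $f_h$: given a neighborhood $U$ of $\mathrm{Bd}\,D$, the set $f_h^{-1}(U)$ is a neighborhood of $\mathrm{Bd}\,C$ (here the hypothesis $f_h^{-1}(\mathrm{Bd}\,D)=\mathrm{Bd}\,C$ is used again); since $\mathrm{Int}\,C$ is an open $n$-cell, hence 1-connected at infinity, there is a connected neighborhood $V'$ of $\mathrm{Bd}\,C$ inside $f_h^{-1}(U)$ whose loops die in $f_h^{-1}(U)\cap\mathrm{Int}\,C$; one then chooses a connected neighborhood $V$ of $\mathrm{Bd}\,D$ with $f_h^{-1}(V)\subset V'$ and applies the \emph{local} form of Lemma 3.1 (with $W=V$) to see that every loop in $V\cap\mathrm{Int}\,D$ is homotopic there to the $f_h$-image of a loop in $f_h^{-1}(V)\cap\mathrm{Int}\,C$, which contracts in $f_h^{-1}(U)\cap\mathrm{Int}\,C$; applying $f_h$ kills the original loop in $U\cap\mathrm{Int}\,D$. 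If you want to salvage your outline, replace your last step by this pullback argument; the duality/contractibility step can then be discarded, since it is never needed.
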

\begin{proof} 
According to  \cite{McMillan 1}, $\text{Int }D$ is an open $n$-cell if (and only if)  $\text{Int }D$ is simply connected at infinity; in other words, given one neighborhood  $U$ of Bd $D$ in $D$, 
one must be able to produce a smaller neighborhood $V$ of $\text{Bd }D$ there such that each loop in $V \cap \text{Int }D$ is null-homotopic in $U \cap \text{Int }D$.
Pull back to $C$.  First, find a connected neighborhood $V'$ of Bd $C$ in $C$ such that, not only is 
$V' \subset f_h^{-1}(U)$ but also all loops in $V' \cap \text{Int }C$
are null-homotopic in $f_h^{-1}(U) \cap \text{Int }C$.
Next, locate a connected neighborhood  $V$ of Bd $D$ in $D$ with $f_h^{-1}(V) \subset V'$.  By Lemma 3.1 every loop $\gamma$  in $V \cap \text{Int }D$ is homotopic there to the image of a loop $\gamma'$ 
in $f_h^{-1}(V) \cap \text{Int }C \subset V' \cap \text{Int }C$, 
and $\gamma'$ in turn is null-homotopic 
in $f_h^{-1}(U) \cap \text{Int }C$. Finally, apply $f_h$ to see that 
$\gamma$ itself is null-homotopic in $U \cap \text{Int }D$. 
\end{proof}

\begin{theorem} 
If $(C,D,h) \in \mathscr{W}_n$ and $T$ is a homotopy taming set for $C$, then $h(T)$ is a homotopy taming set for $D$.
\end{theorem}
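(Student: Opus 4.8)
The plan is to run the same pull-back/push-forward machinery that powered the open-$n$-cell corollary above, but now in order to slide a singular $2$-disk off the "forbidden" part of $\text{Bd }D$. Set $X=\text{Bd }D-h(T)=h(\text{Bd }C-T)$, and for $y\in X$ write $x=h^{-1}(y)\in\text{Bd }C-T$. Saying that $h(T)$ is a homotopy taming set for $D$ is exactly saying that every $m\colon I^2\to D$ can be approximated by a map into $(\text{Bd }D-X)\cup\text{Int }D=h(T)\cup\text{Int }D$, i.e.\ pushed off the compact set $X$. The first move is to reduce this to a local statement via the standard disk-pushing principle from decomposition theory: if $\text{Int }D$ has the local null-homotopy (locally simply connected, $1$-LC) property along $X$ --- for each $y\in X$ and neighborhood $U$ of $y$ there is a smaller neighborhood $V$ in which every loop of $V\cap\text{Int }D$ is null-homotopic in $U\cap\text{Int }D$ --- then any $m\colon I^2\to D$ is approximable by a map whose image misses $X$. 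It therefore suffices to establish this $1$-LC property for $\text{Int }D$ at each point of $X$, and I would obtain it by transferring the analogous property across $f_h$ from $C$.

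The transfer is modeled verbatim on the open-cell corollary. Because $T$ is a homotopy taming set for $C$ and $x\notin T$, the interior $\text{Int }C$ should carry the local null-homotopy property at $x$: a small loop near $x$ bounds a (generally large) singular disk in $C$ since crumpled cubes are cellular, hence contractible, and taming that disk into $T\cup\text{Int }C$ converts it into a null-homotopy lying in $\text{Int }C$. Feeding this into the earlier argument: given $y$ and $U$, choose a connected neighborhood $W\subset U$ of $y$ with $W\cap\text{Bd }D$ connected; take a neighborhood $V'$ of $x$ whose loops in $V'\cap\text{Int }C$ die in $f_h^{-1}(U)\cap\text{Int }C$; then select $V\subset W$ with $f_h^{-1}(V)\subset V'$. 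By Lemma 3.1, $f_h$ carries $\pi_1$ of the relevant component of $f_h^{-1}(W)\cap\text{Int }C$ onto $\pi_1(W\cap\text{Int }D)$, so each loop $\gamma\subset V\cap\text{Int }D$ is homotopic in $W\cap\text{Int }D$ to $f_h\gamma'$ for some loop $\gamma'\subset f_h^{-1}(V)\cap\text{Int }C\subset V'\cap\text{Int }C$; as $\gamma'$ bounds in $f_h^{-1}(U)\cap\text{Int }C$, applying $f_h$ shows $\gamma$ null-homotopic in $U\cap\text{Int }D$. This is the desired $1$-LC property for $D$ along $X$.

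The step I expect to fight with is the passage between the \emph{global} disk-taming hypothesis and the \emph{local} loop-level information that Lemma 3.1 actually delivers. Taming produces a null-homotopy close to a capping disk that may be large, whereas the transfer above needs null-homotopies confined to prescribed neighborhoods of $x$; reconciling these size and locality constraints --- so that the disk-pushing for $D$ runs using only the controls that $f_h$ can propagate --- is the genuine heart of the matter, and it is complicated when $T$ accumulates at $x$. Two further technical hurdles accompany it: arranging the neighborhoods $U,V,W$ to be connected with connected boundary traces so that the hypotheses of Lemma 3.1 hold, which is delicate near wild points; and invoking the disk-pushing principle in a form valid in every dimension $n$, with the low-dimensional cases $n=3,4$ meriting separate care. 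Once these local ingredients are in place, the push-forward through the degree-one map $f_h$ is immediate and the resulting approximation of $m$ lands in $h(T)\cup\text{Int }D$, as required.
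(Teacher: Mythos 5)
Your proposal contains a genuine gap, and it sits exactly where you predicted you would ``fight'': the reduction to a local $1$-LC statement is a reduction to a \emph{false} statement. You propose to show that $\operatorname{Int} D$ is locally simply connected at each point of $X=\operatorname{Bd} D-h(T)$, and, symmetrically, you assert that $\operatorname{Int} C$ is $1$-LC at each $x\in \operatorname{Bd} C - T$ because ``taming that disk into $T\cup\operatorname{Int} C$ converts it into a null-homotopy lying in $\operatorname{Int} C$.'' It does not: the homotopy taming property only yields disks in $T\cup\operatorname{Int} C$, and when $T$ accumulates at $x$ there is no way to strip off the intersection with $T$. This is not a technical nuisance but a fundamental obstruction: $1$-LC of $\operatorname{Int} C$ at $x$ forces local flatness of $\operatorname{Bd} C$ at $x$ (this is precisely Remark 3.1, via Bing's $1$-ULC taming theorem), so your intermediate claim would prove that $\operatorname{Bd} C$ is locally flat at every point of $\operatorname{Bd} C-T$. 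Take $C=D$, $h=f_h=\mathbbm{1}$, with $\operatorname{Bd} C$ everywhere wild and $T$ a dense ($\sigma$-compact, say $1$-dimensional) homotopy taming set with dense complement: the theorem is trivially true, yet no point of $X=\operatorname{Bd} C-T$ is a $1$-LC point. (Separately, $X$ need not be compact, since $T$ need not be open in $\operatorname{Bd} C$; and $C$ being contractible gives large capping disks, never the small ones that $1$-LC demands.)

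The repair is to aim your transfer machinery --- which is otherwise the right use of Lemma 3.1, and close to what the paper does --- at the correct local statement: small loops in $\operatorname{Int} D$ near $\operatorname{Bd} D$ bound small singular disks in $h(T)\cup\operatorname{Int} D$, \emph{not} in $\operatorname{Int} D$. The paper's proof chooses, for each $x\in\operatorname{Bd} D$, neighborhoods $N_x$ and $M_x$ so that $f_h$ induces an epimorphism $\pi_1(M_x\cap\operatorname{Int} C)\to\pi_1(N_x\cap\operatorname{Int} D)$ with size control; a loop $L$ in $M_x\cap\operatorname{Int} C$ contracts in $C$ within a set of small $f_h$-image, that contraction is pushed into $T\cup\operatorname{Int} C$ by the taming hypothesis, and applying $f_h$ --- which carries $T\cup\operatorname{Int} C$ into $h(T)\cup\operatorname{Int} D$ because $f_h|\operatorname{Bd} C=h$ and $f_h^{-1}(\operatorname{Bd} D)=\operatorname{Bd} C$ --- produces a small disk in $h(T)\cup\operatorname{Int} D$ bounded by $f_h(L)$. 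Then a Lebesgue-number argument plus a fine triangulation of $I^2$ (with the $1$-skeleton pushed off $\operatorname{Bd} D$) lets one surger each $2$-simplex whose image meets $\operatorname{Bd} D$, replacing it by such a small disk. Note also that this surgery must treat \emph{all} simplexes whose images meet $\operatorname{Bd} D$, not only those meeting $X$; points of $h(T)$ are harmless to touch, which is exactly why the weaker local statement suffices and why no $1$-LC claim is ever needed.
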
 
\begin{proof} 
Consider any map $\phi:I^2 \to D$ and any $\epsilon > 0$.  By Lemma 3.1, for any
$x \in \text{Bd }D$ there exist a small connected neighborhood $N_x$ of $x\in D$ and a
small connected neighborhood $M_x$ of $h^{-1}(x)$ in $f_h^{-1}(N_x)$ such that
$f_h$  induces an epimorphism of $\pi_1(M_x \cap \text{Int }C) \to \pi_1(N_x \cap
\text{Int }D)$.  Do this so every loop in an $M_x$ contracts in a subset of $C$ whose
image under $f_h$  in an $(\epsilon/2)$-subset of $D$.  Note that if $L$ is a loop
in an $M_x \cap \text{Int }C$, then its image under $f_h$ contracts in an
$(\epsilon/2)$-subset of $h(T) \cup \text{Int }D$.
Produce a (Lebesgue number) $\delta \in (0,\epsilon/2)$ such that any
$\delta$-subset of $D$ within $\delta$ of $\text{Bd }D$ lies in some $M_x$.  

We define a new map $\phi':I^2 \to D$ such that $\phi'$ is $\epsilon$-close to
$\phi$ and $\phi'(I^2) \subset h(T) \cup \text{Int }D$.  First, impose a
triangulation $\mathscr{T}$ of  $I^2$ with mesh so fine that the diameter of
each $\phi(\Delta)$, where $\Delta$ denotes a 2-simplex of $\mathscr{T}$,  is less than $\delta$. Next, approximate $\phi$ by a
map (still called $\phi$), so that the image of the 1-skeleton of $\mathscr{T}$
avoids $\text{Bd }D$; this can be done without affecting any of the size
controls achieved to this point. Then note that, for those 2-simplexes $\Delta \in
\mathscr{T}$ such that $\phi(\Delta)$ meets $\text{Bd }D$, $\phi(\partial \Delta)$ is
homotopic in some $N_x$ to the image under $f_h$ of a loop $L \subset  M_x$, and
$f_h|L$ bounds a singular disk in an $(\epsilon/2)$-subset of $h(T) \cup \text{Int }D$.  
Of course, $\phi'$ and $\phi$ may as well agree on those $\Delta \in \mathscr{T}$
such that $\phi(\Delta) \cap \text{Bd }D =\emptyset$.
\end{proof}

The following corollaries apply when $(C,D,h) \in \mathscr{W}_n$.
\begin{corollary} 
If $C$ is Type 1, so is $D$.
\end{corollary}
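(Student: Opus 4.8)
The plan is to transport the Type~1 structure of $C$ across $h$ and to lean on the preceding theorem for the only genuinely nontrivial requirement. Since $C$ is Type~1, I would fix a $0$-dimensional homotopy taming set $T\subset\text{Bd }C$, written as a countable union $T=\bigcup_{i}K_i$ of Cantor sets $K_i$, each tame relative to $\text{Bd }C$, and I would propose $h(T)$ as the witnessing set for $D$.

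The substantive input is that $h(T)$ is a homotopy taming set for $D$, and this is precisely the content of Theorem~3.5 (that $h$ carries any homotopy taming set for $C$ to a homotopy taming set for $D$); no additional argument is needed there. It then remains only to check that $h(T)$ carries the three structural features demanded by the definition of Type~1, and each survives because $h$ is a homeomorphism. First, $h(T)$ is $0$-dimensional, since topological dimension is a homeomorphism invariant and $h(T)$ is homeomorphic to $T$. Second, $h(T)=\bigcup_i h(K_i)$ is a countable union, and each $h(K_i)$, being the homeomorphic image of a Cantor set, is again a Cantor set by the standard topological characterization (compact, perfect, totally disconnected, metrizable).

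The one point deserving care --- and what I would regard as the main, if still routine, obstacle --- is the tameness of each $h(K_i)$ relative to $\text{Bd }D$. Here I would use that tameness of a Cantor set in an $(n-1)$-sphere is an invariant of the homeomorphism type of the pair. Concretely, if $\lambda:\text{Bd }C\to S^{n-1}$ is a homeomorphism carrying $K_i$ onto a standard Cantor set, then $\lambda\circ h^{-1}:\text{Bd }D\to S^{n-1}$ is a homeomorphism carrying $h(K_i)$ onto that same standard Cantor set, so $h(K_i)$ is tame relative to $\text{Bd }D$. Combining these observations, $h(T)$ is a $0$-dimensional homotopy taming set that is a countable union of Cantor sets tame relative to $\text{Bd }D$, whence $D$ is Type~1.
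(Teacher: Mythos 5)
Your proposal is correct and matches the paper's (implicit) argument: the paper states this corollary as an immediate consequence of Theorem 3.5, with exactly the witnessing set $h(T)$ you propose. Your additional verifications --- that $0$-dimensionality, the countable-union-of-Cantor-sets structure, and tameness relative to the boundary sphere are all preserved because $h$ is a homeomorphism of $(n-1)$-spheres --- are the routine details the paper leaves to the reader, and you have handled them correctly.
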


\begin{corollary} 
If $C$ has the Disjoint Disks Property, so does $D$.
\end{corollary}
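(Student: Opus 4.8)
Recall that $D$ has the \emph{Disjoint Disks Property} (DDP) if any two maps $\phi_1,\phi_2\colon I^2\to D$ and any $\epsilon>0$ admit $\epsilon$-approximations $\phi_1',\phi_2'\colon I^2\to D$ with $\phi_1'(I^2)\cap\phi_2'(I^2)=\emptyset$. The plan is to recast the pushing construction from the proof of Theorem 3.4 so that it handles two disks simultaneously and returns disjoint images, using the DDP of $C$ to separate the local fillings and a general-position move in the open manifold $\text{Int }D$ to absorb the failure of $f_h$ to be injective. Throughout, let $T$ be a homotopy taming set for $C$, so that by Theorem 3.4 its homeomorphic image $h(T)$ is a homotopy taming set for $D$.

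First I would reproduce the setup of Theorem 3.4: for each $x\in\text{Bd }D$ fix the neighborhoods $N_x\supset M_x$ supplied by Lemma 3.1 together with a Lebesgue number $\delta\in(0,\epsilon/2)$, arranged so that any set of diameter $<\delta$ lying within $\delta$ of $\text{Bd }D$ sits inside some $M_x$, and so that a loop in $M_x\cap\text{Int }C$ bounds a singular disk in $C$ whose $f_h$-image has diameter $<\epsilon/2$. Triangulating $I^2$ so finely that every $\phi_i(\Delta)$ has diameter $<\delta$, and adjusting so that the images of the $1$-skeletons miss $\text{Bd }D$, reduces the problem to the $2$-simplices $\Delta$ whose images meet $\text{Bd }D$. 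For each such $\Delta$, the curve $\phi_i(\partial\Delta)$ is homotopic in some $N_x$ to $f_h(L)$ for a loop $L\subset M_x\cap\text{Int }C$, and a filling of $L$ in $C$ furnishes, via $f_h$, the required small replacement disk lying in an $(\epsilon/2)$-subset of $h(T)\cup\text{Int }D$. This is exactly the mechanism of Theorem 3.4; the new ingredient is that all the fillings in $C$ arising from $\phi_1$ and from $\phi_2$ may be chosen, by invoking the DDP of $C$, to be singular disks in $C$ with pairwise disjoint images (the approximation moving them so slightly that the boundary-matching with $\phi_i(\partial\Delta)$ and the $\epsilon/2$ size controls are preserved).

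The main obstacle is that disjointness of the filling disks in $C$ need not survive application of $f_h$, since $f_h$ is far from injective off $\text{Bd }C$. Here one uses the defining condition $f_h^{-1}(\text{Bd }D)=\text{Bd }C$: the separated $C$-disks meet $\text{Bd }C$ only in $T$, and on $\text{Bd }C$ the map $f_h$ coincides with the homeomorphism $h$, so their images stay disjoint throughout $h(T)$; consequently every intersection newly created by $f_h$ is confined to the open $n$-manifold $\text{Int }D$. A final general-position adjustment there (automatic for $n\ge5$, and in all cases moving only points already interior to $D$) removes these, producing $\phi_1',\phi_2'$ that are $\epsilon$-close to $\phi_1,\phi_2$ with disjoint images; hence $D$ has the DDP. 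The delicate bookkeeping lies in organizing the construction so that all uncontrolled overlap is pushed into $\text{Int }D$, where it can be cleared by a manifold general-position move, rather than left on the wild boundary where $f_h$ offers no room to maneuver.
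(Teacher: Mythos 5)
The paper gives no argument at all for this corollary: it is listed, together with the Type~1 and $0$-dimensional-taming-set corollaries, as an immediate consequence of Theorem~3.5 (the theorem that $h(T)$ is a homotopy taming set for $D$ whenever $T$ is one for $C$). The route the authors evidently have in mind is to transfer homotopy taming sets: for crumpled $n$-cubes the Disjoint Disks Property is equivalent to possessing a \emph{pair of disjoint} homotopy taming sets (an equivalence the paper itself uses implicitly in Theorem~6.5), so one takes disjoint taming sets $T_1,T_2$ for $C$, notes that $h(T_1),h(T_2)$ are disjoint taming sets for $D$ by Theorem~3.5 and injectivity of $h$, pushes two given disks in $D$ into $h(T_1)\cup \text{Int }D$ and $h(T_2)\cup \text{Int }D$ respectively, and clears the residual intersections by general position in $\text{Int }D$. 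Your proof is genuinely different and self-contained: you rerun the construction of the taming-set theorem for two disks simultaneously, use the DDP of $C$ to separate the lifted filling disks upstairs, and observe that since $f_h^{-1}(\text{Bd }D)=\text{Bd }C$ and $f_h|\text{Bd }C=h$ is injective, all intersections created by $f_h$ fall in $\text{Int }D$, where general position removes them. This is sound, and it is in fact the same mechanism the paper deploys later (Step~6 of Theorem~6.3 and Theorem~6.5). What your route buys is independence from the ``DDP $\Leftrightarrow$ disjoint taming sets'' equivalence, which the paper never states or proves; what the implicit route buys is brevity, given that equivalence.

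Three caveats, none fatal. First, your parenthetical ``and in all cases'' is false: general position cannot remove intersections of two singular $2$-disks in a $3$- or $4$-manifold. This does not damage the corollary, because for $n\leq 4$ no crumpled $n$-cube has the DDP ($\text{Int }D$ would inherit it, and open manifolds of dimension $\leq 4$ fail it), so the statement is vacuous there; your argument has content exactly where it works, $n\geq 5$, and you should say so rather than claim more. Second, the appeal to $T$ in your final paragraph is both unnecessary and slightly wrong: after the DDP perturbation the $C$-disks need not lie in $T\cup\text{Int }C$ any longer (one cannot in general achieve ``image in $T\cup\text{Int }C$'' and ``pairwise disjoint'' by the same perturbation), but nothing is lost, since disjointness of the $f_h$-images along $\text{Bd }D$ uses only that $f_h$ is one-to-one over \emph{all} of $\text{Bd }D$, not that the boundary intersections lie in $T$. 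Third, the DDP perturbation necessarily moves the boundaries of your filling disks, so ``boundary-matching is preserved'' is not literally achievable by smallness alone; one must splice in small connecting annuli in $\text{Int }D$ (or $\text{Int }C$), and one must combine the finitely many fillings attached to each $\phi_e$ into a single map (the standard arc-connecting trick) before invoking the DDP, which only treats pairs of maps of $I^2$. These are routine repairs of the sort the paper itself leaves to the reader.
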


\begin{corollary} 
If $C$ has a $0$-dimensional homotopy taming set, so does $D$;
\end{corollary}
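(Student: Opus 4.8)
The plan is to combine Theorem 3.2 with the elementary fact that dimension is a topological invariant. Let $T \subset \text{Bd }C$ be a $0$-dimensional homotopy taming set for $C$, furnished by the hypothesis. Theorem 3.2 immediately gives that $h(T)$ is a homotopy taming set for $D$, so the only issue left to settle is that $h(T)$ is again $0$-dimensional. In other words, once the taming property has been transported by Theorem 3.2, I would only need to check that the transport does not raise dimension.

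This remaining point is where one should be slightly careful, although it presents no real difficulty. Since $h:\text{Bd }C \to \text{Bd }D$ is a homeomorphism, its restriction to $T$ is a homeomorphism of $T$ onto $h(T)$. Because $T$ and $h(T)$ are subsets of $(n-1)$-spheres, they are separable metric spaces, for which the various notions of dimension coincide and are preserved under homeomorphisms; hence $\dim h(T) = \dim T = 0$. Thus $h(T)$ is the desired $0$-dimensional homotopy taming set for $D$.

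The honest assessment is that there is essentially no obstacle here: all of the substantive content has already been absorbed into Theorem 3.2, and the corollary is a one-line deduction combining that theorem with dimensional invariance. The only thing worth stating explicitly is the invocation of topological invariance of dimension, which justifies passing the numerical bound $\dim = 0$ across the homeomorphism $h$.
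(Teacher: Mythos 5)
Your proof is correct and matches the paper's intended argument: the paper states this corollary as an immediate consequence of its theorem that $h(T)$ is a homotopy taming set for $D$ whenever $T$ is one for $C$ (Theorem 3.5 in the paper's numbering, not 3.2), with the $0$-dimensionality of $h(T)$ following from topological invariance of dimension under the homeomorphism $h$. The paper offers no written proof at all, treating exactly this deduction as obvious, so your write-up is if anything more complete than the original.
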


\begin{remark}
If  $T$  is any homotopy taming set for the crumpled cube $C$, then the wild set
of $C$  is contained in the closure of  $T$.  Essentially by definition of 
homotopy taming set, $\text{Int }C$ is locally simply connected at all points
of $\text{Bd }C - T$, which assures local flatness there 
\cite{Bing 2} \cite{FreedmanQuinn} \cite[\S 7.6]{DavermanVenema}.
\end{remark}

\begin{corollary} 
Suppose $W_C$ and $W_D$ are the wild sets in $\mathrm{Bd}$ $C$ and $\mathrm{Bd}$ $D$ respectively. Then $W_D\subset 
h(W_C)$. 
\end{corollary}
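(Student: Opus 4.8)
The plan is to derive the containment directly from Theorem 3.3 together with the Remark immediately preceding the statement. Recall that $h$ is a homeomorphism $\mathrm{Bd}\,C \to \mathrm{Bd}\,D$, so for any $T \subseteq \mathrm{Bd}\,C$ one has $\overline{h(T)} = h(\overline{T})$. If $T$ is a homotopy taming set for $C$, then Theorem 3.3 tells us $h(T)$ is a homotopy taming set for $D$, and the Remark (applied to $D$) then yields $W_D \subseteq \overline{h(T)} = h(\overline{T})$. Thus $W_D \subseteq h(\overline{T})$ for \emph{every} homotopy taming set $T$ of $C$, and the whole problem reduces to producing a homotopy taming set $T$ for $C$ whose closure is no larger than $W_C$; since the Remark already forces $W_C \subseteq \overline{T}$, what I really want is a taming set $T$ with $\overline{T} = W_C$.

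First I would try to show that the wild set $W_C$ itself is a homotopy taming set. Fix a map $m:I^2 \to C$ and $\epsilon > 0$; I must approximate $m$ by a map $m'$ with $m'(I^2) \subseteq W_C \cup \mathrm{Int}\,C$. The point is that at every point of the open set $\mathrm{Bd}\,C - W_C$ the boundary is locally flat, hence locally collared, so near such points a product collar lets me homotope the image off $\mathrm{Bd}\,C$ into $\mathrm{Int}\,C$ by a small push. Carrying this out simultaneously over all of $\mathrm{Bd}\,C - W_C$ --- rather than at a single point --- is exactly the delicate part, and I expect it to be the \textbf{main obstacle}: the collars at different points need not agree, and the image $m(I^2)$ may accumulate onto $W_C$, so the pushes must be tapered to zero as one approaches $W_C$ and patched together (a partition-of-unity / infinite-process argument, most naturally carried out inside $\mathrm{Infl}(C,d)$) while keeping the total motion under $\epsilon$. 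Once this ``locally flat modulo $W_C$'' pushing is arranged, the resulting $m'$ meets $\mathrm{Bd}\,C$ only inside $W_C$, so $W_C$ is a homotopy taming set.

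If isolating the entire flat part in one stroke proves awkward, I would fall back on a safer variant that handles one flat point at a time. For each $x_0 \in \mathrm{Bd}\,C - W_C$ choose a closed collar neighborhood $B$ of $x_0$ over which $C$ is a product; starting from any $1$-dimensional homotopy taming set $T_0$ (which exists by the cited result), the collar over $\mathrm{Int}\,B$ lets me push disks off $\mathrm{Int}\,B$ and thereby verify that a taming set $T$ with $x_0 \notin \overline{T}$ exists. Intersecting over all flat points then gives $\bigcap_T \overline{T} \subseteq W_C$, and since $h$ commutes with intersections this still delivers $W_D \subseteq h\!\left(\bigcap_T \overline{T}\right) \subseteq h(W_C)$.

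Finally, with a homotopy taming set $T$ for $C$ satisfying $\overline{T} = W_C$ in hand, I close the argument as in the first paragraph: $h(T)$ tames $D$ by Theorem 3.3, and the Remark applied to $D$ gives $W_D \subseteq \overline{h(T)} = h(\overline{T}) = h(W_C)$, which is the desired conclusion.
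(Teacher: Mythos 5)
Your proposal is correct and follows essentially the same route as the paper: the paper takes an arbitrary homotopy taming set $T$, argues (by exactly the collar-pushing you describe, and which it likewise asserts rather than details) that $T \cap W_C$ is again a homotopy taming set, and then applies the taming-set theorem together with the Remark to conclude $W_D \subset \mathrm{Cl}\,h(T \cap W_C) \subset h(W_C)$. Your main variant --- showing $W_C$ itself is a homotopy taming set --- is just the special case $T = \mathrm{Bd}\,C$ of that argument, so the step you flag as the main obstacle is precisely the step the paper's proof also rests on.
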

\begin{proof}
Take any homotopy taming set $T$ for $C$.  Note that singular disks in $C$ can be
adjusted, fixing points that are sent to $W_C$, while moving the image off
$\text{Bd }C - W_C$. In other words, $T \cap W_C$ is another homotopy taming set for
$C$.  Since then $h(T \cap W_C)$ is a homotopy taming set for $D$, we have $W_D \subset \text{Cl }h(T \cap W_C) \subset \text{Cl }h(T) \cap h(W_C) \subset h(W_C)$.
\end{proof}

\begin{remark}
Even when $C$ and $D$ are locally flat modulo wild sets $W_C$ and $W_D$ that are 
tame in space, and there is a homeomorphism $h:\text{Bd }C \to \text{Bd } D$  with $h(W_C) = W_D$, 
one cannot infer that 
$C$ is at least as wild as $D$. To see why not,
we consider a pair of different 
crumpled 3-cubes each is locally flat modulo two points, the first might have simply connected interior and second might be non-simply connected. For higher 
dimensional cases, spins or suspensions can be applied to obtain different crumpled $n$-cubes each locally flat modulo 
an $(n-3)$-cell or a pair of $(n-3)$-spheres that are tame in $S^n$.
\end{remark}

\begin{corollary} 
If $(C,D,h)$ and $(D,C,h^{-1})$ both belong to $\mathscr{W}_n$, then $h$ sends the wild set of $C$ onto the wild set 
of $D$.
\end{corollary}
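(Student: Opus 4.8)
The plan is to derive the equality $h(W_C) = W_D$ by establishing the two opposite inclusions separately, each as an immediate instance of the preceding corollary. That corollary asserts that whenever $(C,D,h) \in \mathscr{W}_n$ one has $W_D \subset h(W_C)$, and the present hypothesis supplies membership in $\mathscr{W}_n$ in \emph{both} directions, so the entire argument reduces to two applications of that single fact together with a little bookkeeping.

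First I would apply the preceding corollary to the triple $(C,D,h)$, which lies in $\mathscr{W}_n$ by assumption; this yields $W_D \subset h(W_C)$. Next I would apply the very same corollary to the triple $(D,C,h^{-1})$, which is also in $\mathscr{W}_n$ by assumption. In this second application the roles of $C$ and $D$ are interchanged and $h$ is replaced by $h^{-1}$, so the conclusion reads $W_C \subset h^{-1}(W_D)$. Because $h$ is a homeomorphism of $\text{Bd }C$ onto $\text{Bd }D$, pushing this inclusion forward by $h$ converts it into $h(W_C) \subset W_D$.

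Combining $W_D \subset h(W_C)$ with $h(W_C) \subset W_D$ gives $h(W_C) = W_D$, which is exactly the assertion. No genuine obstacle arises here; the only step meriting attention is the bookkeeping in the second application, namely confirming that swapping $C$ with $D$ and replacing $h$ by $h^{-1}$ really produces the inclusion $W_C \subset h^{-1}(W_D)$ (rather than its reverse), so that after transport by $h$ the two containments face in opposite directions and pinch down to the desired equality.
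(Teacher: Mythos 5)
Your proof is correct and is precisely the argument the paper intends: the corollary is stated without proof because it follows immediately from two applications of the preceding corollary ($W_D \subset h(W_C)$ from $(C,D,h)$, and $W_C \subset h^{-1}(W_D)$ from $(D,C,h^{-1})$, hence $h(W_C) \subset W_D$). Your bookkeeping in the second application is also handled correctly, so nothing is missing.
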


\begin{theorem} 
Suppose $C,D$ are closed n-cell-complements, 
$(C,D,h) \in \mathscr{W}_n$ and $X$ is a compact subset of $\mathrm{Bd}$ $C$ with $\mathrm{dim}$ $X<
n-2$ and $X$ $1$-LCC embedded in $S^n$.  Then $h(X)$ is $1$-LCC embedded in $S^n$.
\end{theorem}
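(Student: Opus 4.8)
The plan is to reuse the extension of $f_h$ built in the proof of Lemma 3.1 and to convert its degree-one machinery into a statement about the punctured spheres $S^n - X$ and $S^n - h(X)$. Treating $C$ and $D$ as closed $n$-cell-complements, I would extend $f_h$ to a proper map $F_h : S^n \to S^n$ of geometric degree one that carries $S^n - \mathrm{Int}\,C$ homeomorphically onto $S^n - \mathrm{Int}\,D$ and satisfies $F_h|\mathrm{Bd}\,C = h$. The first thing to record is that $F_h^{-1}(\mathrm{Bd}\,D) = \mathrm{Bd}\,C$: on the outside this is immediate from the homeomorphism, while on $\mathrm{Int}\,C$ it holds because $f_h^{-1}(\mathrm{Bd}\,D) = \mathrm{Bd}\,C$. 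Since $F_h$ restricts on $\mathrm{Bd}\,C$ to the bijection $h$, it follows that $F_h^{-1}(h(X)) = X$ and, for each $x \in X$ with $y = h(x)$, that $F_h^{-1}(y) = \{x\}$. In particular $F_h$ restricts to a proper surjection $F_h : S^n - X \to S^n - h(X)$; because $\dim X \le n-3$ the sets $S^n - X$ and $S^n - h(X)$ are connected orientable open $n$-manifolds, and since $F_h$ is a homeomorphism over the open set $S^n - D \subset S^n - h(X)$ this restriction is again a proper degree-one map.

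Next I would localize the $\pi_1$-epimorphism of Lemma 3.1 to this punctured setting. Exactly as there, for a connected open $V \subset S^n$ with $V \cap \mathrm{Bd}\,D$ connected, writing $Y_V$ for the component of $F_h^{-1}(V)$ that meets $h^{-1}(V \cap \mathrm{Bd}\,D)$, the Epstein result shows that $F_h : Y_V - X \to V - h(X)$ has degree one and hence induces an epimorphism $\pi_1(Y_V - X) \to \pi_1(V - h(X))$. Here $Y_V - X$ is a connected component of $F_h^{-1}(V - h(X)) = F_h^{-1}(V) - X$ and $V - h(X)$ is connected, both because $X$ and $h(X)$ have codimension at least three. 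This is the engine that pushes loops downstairs back up to the $X$-side, where the hypothesis that $X$ is $1$-LCC can be invoked.

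To verify that $h(X)$ is $1$-LCC at $y = h(x)$, fix an open $U \ni y$. Using continuity of $F_h$ I would choose an open $A \ni x$ with $F_h(A) \subset U$, and then, since $X$ is $1$-LCC at $x$, an open $B$ with $x \in B \subset A$ such that every loop in $B - X$ contracts in $A - X$. Because $F_h$ is a closed map with $F_h^{-1}(y) = \{x\}$, the set $V := S^n - F_h(S^n - B)$ is an open neighborhood of $y$ with $F_h^{-1}(V) \subset B$, and I would shrink it so that in addition $V \subset U$, $V$ is connected, and $V \cap \mathrm{Bd}\,D$ is connected. Now given any loop $\gamma$ in $V - h(X)$, the epimorphism above produces a loop $\gamma'$ in $Y_V - X \subset B - X$ with $F_h \circ \gamma'$ homotopic to $\gamma$ in $V - h(X)$. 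By the choice of $B$, $\gamma'$ contracts in $A - X$; applying $F_h$ and using $F_h(A - X) \subset U - h(X)$ (which holds because $F_h(A) \subset U$ and $F_h^{-1}(h(X)) = X$), the loop $F_h \circ \gamma'$, and hence $\gamma$, contracts in $U - h(X)$. Thus $V$ witnesses the $1$-LCC condition at $y$, and since $y$ was arbitrary $h(X)$ is $1$-LCC embedded in $S^n$.

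The step I expect to be the main obstacle is the second paragraph: faithfully transferring the geometric-degree-one, Epstein-based $\pi_1$-epimorphism of Lemma 3.1 from the crumpled cubes to the punctured spheres $S^n - X$ and $S^n - h(X)$, including the verification that deleting the codimension-$\ge 3$ sets $X$ and $h(X)$ preserves connectivity of the relevant pieces and leaves the degree computation intact. Everything else is either the bookkeeping identities $F_h^{-1}(h(X)) = X$ and $F_h^{-1}(y) = \{x\}$ or a routine pull-back and push-forward of a single null-homotopy; notably, no general position is needed, precisely because the contracting disks are transported through $F_h$ as singular disks rather than embedded ones.
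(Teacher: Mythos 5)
Your proof is correct and follows essentially the same route as the paper's: extend $f_h$ to a degree-one map $F_h:S^n \to S^n$ that is a homeomorphism over the complement of $D$ (so that $F_h^{-1}(h(X)) = X$), transfer the Epstein/degree-one argument of Lemma 3.1 to punctured neighborhoods to obtain an epimorphism $\pi_1(Y - X) \to \pi_1\big(V - h(X)\big)$, lift loops, contract them in the complement of $X$, and push the contraction forward by $F_h$. The only minor difference is where the $1$-LCC hypothesis enters: you invoke it locally at $x$ to contract the lifted loop in $A - X$, whereas the paper first contracts the lifted loop in $F_h^{-1}(V)$ (using only smallness of $F_h^{-1}$ of small sets) and then uses the $1$-LCC embedding of $X$, together with $\dim X < n-2$, to push that singular disk off $X$.
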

\begin{proof}
Extend $f_h: C \to D$ to a map $F_h:S^n \to S^n$,
with $F_h|S^n - C:S^n - C \to S^n - D$ a homeomorphism.

Let $V$ be a neighborhood of $h(x)\in h(X)$.  Find a smaller connected
neighborhood $W$ of $h(x)$
with $W \cap \text{Bd }D$ connected, such that every loop in $F_h^{-1}(W)$ contracts
in $F_h^{-1}(V)$.
Let $Y$ denote the component of $F_h^{-1}(W)$ containing $x$.  Note that, due to
the dimension restriction, $X$ does not separate $Y$.  By the argument given
for Lemma 3.1, $F_h$ induces an epimorphism $\pi_1(Y - X) \to \pi_1\big(W -
h(X)\big)$.

Consider any loop $\alpha$ in $W - h(X)$.  It is the image there of a loop
$\alpha'$ from $Y - X$.  By design, $\alpha$ is null-homotopic in $F_h^{-1}(V)$;
better yet, since $X$ is 1-LCC embedded, $\alpha'$ is null-homotopic in
$F_h^{-1}(V) - X$.  Application of $F_h$ demonstrates that $\alpha$ is
null-homotopic in $V - h(X)$.

\end{proof}

\begin{corollary} 
If $C$ and $D$ are closed  $n$-cell-complements,
$(C,D,h) \in \mathscr{W}_n$ and
$\mathrm{Bd }$ $C$ is locally flat modulo a Cantor set tamely embedded in $S^n$, where $n\geq 5$,  then $\mathrm{Bd}$ $D$ is locally flat modulo 
a Cantor set tamely embedded in $S^n$.
\end{corollary}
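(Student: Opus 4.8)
The plan is to exhibit a single tame Cantor set in $S^n$ modulo which $\mathrm{Bd}\ D$ is locally flat; the natural candidate is $h(W_C)$, where $W_C$ denotes the wild set of $C$. First I would put the hypothesis into the form needed by Theorem 3.8: since $W_C$ is a tame Cantor set, it is $1$-LCC embedded in $S^n$ (that tame $0$-dimensional compacta are $1$-LCC is standard), and $\dim W_C = 0 < n-2$ because $n \geq 5$.

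Then I would apply Theorem 3.8 with $X = W_C$. The two conditions just verified are exactly what that theorem requires, so it yields that $h(W_C)$ is $1$-LCC embedded in $S^n$. Because $h$ carries $\mathrm{Bd}\ C$ homeomorphically onto $\mathrm{Bd}\ D$, the image $h(W_C)$ is again a Cantor set, now lying in $\mathrm{Bd}\ D \subset S^n$.

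The crux is to promote ``$1$-LCC'' to ``tame,'' and here I would invoke the high-dimensional taming theorem for $0$-dimensional compacta: in $S^n$ with $n \geq 5$, a Cantor set is tamely embedded if and only if it is $1$-LCC (see \cite{DavermanVenema}). Applied to $h(W_C)$, this shows that $h(W_C)$ is a tame Cantor set in $S^n$. I expect this to be the main obstacle and the only genuinely nonformal step; it is also the point at which the restriction $n \geq 5$ is actually used, since the engulfing and approximation machinery behind such taming results demands high ambient dimension (the application of Theorem 3.8 alone would only need $n \geq 3$).

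Finally I would assemble the conclusion from Corollary 3.7, which supplies $W_D \subset h(W_C)$ for the wild set $W_D$ of $D$. Consequently $\mathrm{Bd}\ D$ is locally flat at every point of $\mathrm{Bd}\ D - h(W_C)$; that is, $\mathrm{Bd}\ D$ is locally flat modulo the tame Cantor set $h(W_C)$, which is exactly the desired conclusion. Note that one never needs to pin down $W_D$ precisely --- it suffices that it be swallowed by the tame Cantor set $h(W_C)$.
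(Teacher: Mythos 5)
Your proposal is correct and is essentially the argument the paper intends: the corollary appears with no written proof, as an immediate consequence of Theorem 3.8 combined with the standard facts that tame $0$-dimensional compacta in $S^n$ ($n\geq 3$) are $1$-LCC and that, for $n \geq 5$, $1$-LCC $0$-dimensional compacta in $S^n$ are tame \cite{DavermanVenema}, together with Corollary 3.7 to confine the wild set of $D$. One small repair: the hypothesis furnishes a tame Cantor set $X \supseteq W_C$ rather than asserting that $W_C$ itself is a Cantor set (the wild set could, e.g., be a single point or have isolated points), so you should run the argument on $X$ instead of $W_C$; then $h(X)$ is an honest tame Cantor set, and $W_D \subseteq h(W_C) \subseteq h(X)$ yields the stated conclusion verbatim.
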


\begin{corollary} 
Let $C$ and $D$  be closed $n$-cell-complements.
If  $(C,D,h) \in \mathscr{W}_n$  and Bd $C$ is locally flat modulo a codimension $3$ subset $W_C$ of $\mathrm{Bd}$ $C$  that is embedded in space as a tame polyhedron, then  $D$ is also locally flat modulo a tame subset.
\end{corollary}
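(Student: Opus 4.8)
The plan is to exhibit a single tame set modulo which Bd $D$ is locally flat, namely the image $h(W_C)$, and to establish its tameness by feeding the $1$-LCC preservation of Theorem 3.11 into a codimension-$\ge 3$ taming theorem. I assume $n \ge 5$ throughout, the lower-dimensional cases being vacuous or classical.

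First I would record the two properties of $W_C$ that drive the argument. Since $W_C$ has codimension $3$, its dimension satisfies $\dim W_C \le n-3 < n-2$ and it sits in $S^n$ with codimension $\ge 3$; because $W_C$ is moreover a tame polyhedron there, the standard fact that tame polyhedra of codimension $\ge 3$ are $1$-LCC embedded shows that $W_C$ is $1$-LCC in $S^n$. Thus $W_C$ meets the hypotheses of Theorem 3.11, which I invoke to conclude that $h(W_C)$ is $1$-LCC embedded in $S^n$. At the same time, because Bd $C$ is locally flat modulo $W_C$, the wild set $W(C)$ of $C$ lies in $W_C$, so Corollary 3.9 gives $W(D) \subseteq h(W(C)) \subseteq h(W_C)$ for the wild set $W(D)$ of $D$; hence Bd $D$ is locally flat modulo $h(W_C)$, and it remains only to see that $h(W_C)$ is tame.

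For the tameness step, note that $h$ carries $W_C$ homeomorphically onto $h(W_C)$, so $h(W_C)$ is homeomorphic to the polyhedron $W_C$, with $\dim h(W_C) = \dim W_C \le n-3$; in other words the inclusion $h(W_C) \hookrightarrow S^n$ is a topological embedding of a polyhedron in codimension $\ge 3$, and by the preceding paragraph it is $1$-LCC. Appealing to the taming theorem for $1$-LCC embeddings of polyhedra in codimension $\ge 3$ with $n \ge 5$ (see \cite{DavermanVenema}), I conclude that $h(W_C)$ is tamely embedded in $S^n$. Therefore Bd $D$ is locally flat modulo the tame set $h(W_C)$.

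I expect the final taming step, rather than the bookkeeping, to be the crux. The delicate point is that $h$ is only a homeomorphism of boundary spheres and may itself be wild, so tameness of $h(W_C)$ cannot be read off from tameness of $W_C$; it has to be recovered from the $1$-LCC property produced by Theorem 3.11 together with the codimension-$\ge 3$ taming machinery, and one must verify that $h(W_C)$, merely as a space homeomorphic to a polyhedron, legitimately satisfies the hypotheses of that machinery.
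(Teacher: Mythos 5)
Your proof is correct and follows exactly the route the paper intends: the corollary is stated as an immediate consequence of Theorem 3.11, and you supply precisely that derivation --- $W_C$ is $1$-LCC in $S^n$ since it is a tame polyhedron of codimension at least $3$, Theorem 3.11 transfers the $1$-LCC property to $h(W_C)$, Corollary 3.9 puts the wild set of $D$ inside $h(W_C)$, and the codimension-$\geq 3$ taming theorem for $1$-LCC topological embeddings of polyhedra (as in \cite{DavermanVenema}) yields tameness. Your closing remark correctly identifies the one point requiring care, namely that $h(W_C)$ is only a topologically embedded copy of a polyhedron, which is exactly the setting that taming machinery handles.
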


\begin{corollary} 
No closed $n$-cell-complement that is locally flat modulo a tame subset of codimension $3$ or greater can be at least as wild as one which is locally flat modulo a wild set. 
\end{corollary}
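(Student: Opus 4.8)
The plan is to prove the contrapositive and read the result off from Corollary 3.9. Write $C$ for a closed $n$-cell-complement that is locally flat modulo a tame subset $W_C\subset\mathrm{Bd}\,C$ of codimension at least $3$, and let $D$ be a closed $n$-cell-complement whose wild set $W_D$ is wildly embedded (this is what the phrase ``locally flat modulo a wild set'' records). Suppose, toward a contradiction, that $C$ is at least as wild as $D$; by definition there is then a homeomorphism $h$ with $(C,D,h)\in\mathscr{W}_n$, and I fix an associated map $f_h$.

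First I would align the data with the exact hypotheses of Corollary 3.9. Since $W_C$ is tame there is an ambient homeomorphism of the sphere containing $C$ carrying $W_C$ onto a subpolyhedron; this alters neither the abstract crumpled cube $C$ nor its embedding type, and relabelling $h$ by precomposition with the restricted homeomorphism sends the triple to an $\mathscr{W}_n$-equivalent one. Hence I may assume $W_C$ is a tame polyhedron. The standing hypothesis $\mathrm{codim}\,W_C\ge 3$ is precisely the restriction $\dim W_C<n-2$ demanded by Theorem 3.3, on which Corollary 3.9 rests, so that corollary applies without change: its argument uses only the $1$-LCC-ness of a codimension-$\ge 3$ tame set, the preservation of $1$-LCC-ness under $f_h$ supplied by Theorem 3.3, and the inclusion $W_D\subset h(W_C)$ furnished by Corollary 3.7.

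With these hypotheses in force, Corollary 3.9 yields that $D$ is locally flat modulo a tame subset --- concretely, modulo $h(W_C)$, which Theorem 3.3 shows to be $1$-LCC and which is therefore tame in codimension $\ge 3$. This is the sought contradiction: because $W_D$ is the minimal closed subset of $\mathrm{Bd}\,D$ off which $D$ is locally flat, we have $W_D\subset h(W_C)$, exhibiting $W_D$ as a closed subset of a tame codimension-$\ge 3$ set, hence itself tame, in conflict with the assumed wildness of $W_D$. Thus no such $h$ exists and $C$ is not at least as wild as $D$.

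The single point needing care --- and the only place the argument is more than a formal contrapositive --- is this concluding incompatibility: one must know that ``locally flat modulo a tame subset'' genuinely excludes ``locally flat modulo a wild set.'' Under the reading that the latter phrase means $D$ fails to be locally flat modulo \emph{any} tame set, the clash with Corollary 3.9 is immediate. Otherwise one invokes the hereditary-tameness observation that a closed subset of a tamely embedded codimension-$\ge 3$ set is again tame, so that $W_D\subset h(W_C)$ forces $W_D$ to be tame. I expect this hereditary-tameness step to be the main (indeed essentially the only) obstacle; everything preceding it is bookkeeping layered on top of Corollary 3.9.
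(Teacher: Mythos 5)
Your argument is correct and follows essentially the paper's own route: the paper states this corollary without proof, as a direct consequence of the $1$-LCC preservation theorem (Theorem 3.11 in the paper's numbering), the wild-set inclusion $W_D\subset h(W_C)$ (Corollary 3.9), and the standard taming criterion in codimension $\geq 3$ --- exactly the machinery you assemble. Two small points deserve mention. First, your preliminary ``straightening'' step, claiming an ambient homeomorphism carries the tame set $W_C$ onto a subpolyhedron, is false in general (a tame Cantor set, which the hypothesis certainly allows, is not homeomorphic to any polyhedron); but the step is also unnecessary, since, as you yourself observe, the argument consumes only the fact that a tame compactum of codimension $\geq 3$ is $1$-LCC embedded, so $W_C$ can be fed directly into the $1$-LCC preservation theorem rather than routed through the polyhedral corollary. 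Second, the hereditary-tameness step you single out as the crux is genuine and does hold in this codimension: embedding dimension (demension) is monotone on closed subsets, and by \v{S}tan'ko's criterion a compactum $X\subset S^n$, $n\geq 5$, with $\dim X\leq n-3$ is tame exactly when it is $1$-LCC, equivalently $\mathrm{dem}\,X\leq n-3$; hence $W_D\subset h(W_C)$ forces $W_D$ to be tame, giving the contradiction. Alternatively, reading ``locally flat modulo a wild set'' as ``not locally flat modulo any tame set'' makes the clash with the conclusion of the polyhedral corollary immediate, and that is very likely the paper's intended reading.
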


\begin{theorem} 
If $(C,D,h)\in \mathscr{W}_3$ and $p$ is a piercing point of $C$, then $h(p)$ is a piercing point of $D$.
\end{theorem}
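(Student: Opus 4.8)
The plan is to reduce everything to McMillan's intrinsic characterization recorded after the definition of piercing point: a boundary point $q$ of a crumpled $3$-cube is a piercing point if and only if the punctured cube is locally simply connected there, i.e. for every neighborhood $U$ of $q$ there is a neighborhood $V\subset U$ of $q$ such that every loop in $V-\{q\}$ contracts in $U-\{q\}$. Thus it suffices to begin with this property for $C$ at $p$ and transport it, through an associated map $f_h$, to $D$ at $h(p)$. The observation that makes the transport work is that $f_h^{-1}(h(p))=\{p\}$: since $h(p)\in\text{Bd }D$ and $f_h^{-1}(\text{Bd }D)=\text{Bd }C$, the preimage is forced into $\text{Bd }C$, where $f_h$ restricts to the homeomorphism $h$. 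Consequently $f_h^{-1}(U-\{h(p)\})=f_h^{-1}(U)-\{p\}$ for every $U$, which is exactly the bookkeeping needed to match punctured neighborhoods upstairs and downstairs.

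Given a neighborhood $U$ of $h(p)$ in $D$, I would first invoke McMillan's characterization for $C$ at the piercing point $p$ to produce a connected neighborhood $V'$ of $p$ with $V'\subset f_h^{-1}(U)$ such that every loop in $V'-\{p\}$ is null-homotopic in $f_h^{-1}(U)-\{p\}=f_h^{-1}(U-\{h(p)\})$. Because $f_h$ is a degree-one, hence surjective, map with $f_h^{-1}(h(p))=\{p\}$, a routine compactness argument (take $D-f_h(C-V')$ and intersect with $U$) then yields a connected neighborhood $V$ of $h(p)$ in $D$ with $V\cap\text{Bd }D$ connected, $V\subset U$, and $f_h^{-1}(V)\subset V'$. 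Let $Y$ be the component of $f_h^{-1}(V)$ containing $p$; since $f_h^{-1}(V\cap\text{Bd }D)=h^{-1}(V\cap\text{Bd }D)$ is connected and contains $p$, this is the component described in Lemma 3.1.

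The crux is then to run the argument of the $1$-LCC theorem above with the compact set taken to be the single point $\{p\}$. Here $n=3$, so $\dim\{p\}=0<1=n-2$, the point does not separate $Y$, and the degree-one reasoning behind Lemma 3.1 (via the ambient extension $F_h:S^3\to S^3$) upgrades to an epimorphism $\pi_1(Y-\{p\})\to\pi_1(V-\{h(p)\})$ that now records loops possibly running along $\text{Bd }D$. Now take any loop $\gamma$ in $V-\{h(p)\}$. By surjectivity it is homotopic in $V-\{h(p)\}$ to $f_h\circ\gamma'$ for some loop $\gamma'$ in $Y-\{p\}\subset V'-\{p\}$; by the choice of $V'$, $\gamma'$ contracts in $f_h^{-1}(U-\{h(p)\})$, so $f_h\circ\gamma'$ contracts in $U-\{h(p)\}$. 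Since $V\subset U$, the homotopy carrying $\gamma$ to $f_h\circ\gamma'$ also lies in $U-\{h(p)\}$, whence $\gamma$ is null-homotopic there. This verifies that $D-\{h(p)\}$ is locally simply connected at $h(p)$, and McMillan's characterization applied to $D$ finishes the proof.

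I expect the main obstacle to be the epimorphism $\pi_1(Y-\{p\})\to\pi_1(V-\{h(p)\})$: unlike the statement of Lemma 3.1, the relevant loops are not confined to the interiors but may meet $\text{Bd }D$, so one must re-derive the degree-one/epimorphism conclusion on the manifold-with-boundary pair obtained by deleting the low-dimensional sets $\{p\}$ and $\{h(p)\}$. The inequality $\dim\{p\}<n-2$ is precisely what guarantees that deleting the point neither separates $Y$ nor disturbs the geometric-degree-one computation, exactly as in the $1$-LCC theorem; everything else is the neighborhood bookkeeping outlined above.
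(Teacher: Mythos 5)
Your reduction to McMillan's local-simple-connectivity criterion, the key observation that $f_h^{-1}\big(h(p)\big)=\{p\}$, and the neighborhood bookkeeping are all correct. The genuine gap is exactly the step you flag as the crux: the claimed epimorphism $\pi_1(Y-\{p\})\to\pi_1\big(V-\{h(p)\}\big)$. The degree-one argument behind Lemma 3.1 (Epstein's theorem) applies to proper maps between orientable $n$-manifolds, and it applies in the 1-LCC theorem (Theorem 3.11) only because the neighborhoods $W$ and $Y$ there are taken in $S^n$, so that $Y-X$ and $W-h(X)$ are open subsets of $S^n$ and hence honest manifolds. In your setting $V$ and $Y$ are neighborhoods inside the crumpled cubes $D$ and $C$; they contain boundary points, and at wild points a crumpled cube is \emph{not} a manifold with boundary --- that is precisely what wildness means. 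So there is no ``manifold-with-boundary pair'' on which to re-derive degree theory: the $H^n_c$-degree of $f_h\colon Y-\{p\}\to V-\{h(p)\}$ is not even defined, and the covering-space argument underlying Epstein's theorem has no footing. The inequality $\dim\{p\}<n-2$ is beside the point; in Theorem 3.11 that hypothesis handles non-separation and pushing homotopies off the deleted set, not the failure of the spaces to be manifolds.

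The missing ingredient is a way to handle loops in $V-\{h(p)\}$ that run along $\text{Bd }D$. Your argument can be repaired: first homotope such a loop into $\text{Int }D$ by a homotopy whose size at each point is small compared with the distance to $h(p)$ (and to $D-V$), using the facts that crumpled cubes are ANRs and that their boundaries are 0-LCC in them (the paper invokes the latter in Step 4 of Theorem 6.3); then apply Lemma 3.1 verbatim to the resulting interior loop, contract its pullback via the piercing property of $p$, and push forward with $f_h$. Note this naturally produces only the statement that loops in a smaller neighborhood contract in the larger given one, which is all McMillan's criterion requires. But once this boundary-avoidance machinery enters, you are essentially rebuilding what homotopy taming sets encapsulate, and the paper short-circuits all of it: it quotes the other form of McMillan's theorem --- $p$ is a piercing point of $C$ if and only if $C$ has a homotopy taming set $T$ with $p\notin T$ --- after which Theorem 3.5 ($h$ carries homotopy taming sets of $C$ to homotopy taming sets of $D$) finishes the proof in two lines.
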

\begin{proof}
By \cite{McMillan 2} a point $x$ in the boundary of a crumpled cube
$C$  is a piercing point if and only if $C$ has a homotopy
taming set $T$ such that $x \notin T$.  Consequently, the existence
of such a $T$  with $p \notin T$ implies $h(p) \notin h(T)$,
which in turn implies that $h(p)$ is a piercing point of $D$.
\end{proof}

\begin{definition} 
The boundary $\Sigma$ of a crumpled $n$-cube $C$ can be \textit{carefully almost approximated from} Int $C$  provided that, for each $\epsilon > 0$  there exists a locally flat embedding $\theta$ of $\Sigma$ in $S^n$ within $\epsilon$ of the inclusion $\Sigma \to S^n$ such that each component of $\theta(\Sigma) - \text{Int }C$ has diameter less than $\epsilon$ and $\Sigma \cap \theta(\Sigma)$ is covered by the interiors of a finite collection of pairwise disjoint ($n-1$)-cells in $\Sigma$, each of diameter less than $\epsilon$.  
\end{definition}

\begin{theorem} 
Suppose $C,D$ are crumpled $n$-cubes,  
$\mathrm{Bd }$ $C$ can be carefully almost approximated from 
$\mathrm{Int}$ $C$, and
$(C,D,h) \in \mathscr{W}_n$. Suppose also $m:B^2 \to 
\mathrm{Bd }$ $D$ is a map and $\delta$ is a positive number.  Then there exists a map $m':B^2 \to  D$ such that 
$\rho(m',m)<\delta$, $m'|\partial B^2=m|\partial B^2$ and 
$$m'(B^2) \cap \text{Bd } D \subset N_\delta\big(m(\partial B^2)\big).$$
\end{theorem}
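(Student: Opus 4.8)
The plan is to transfer the desired disk-pushing property from $C$ to $D$ by pulling $m$ back across $h$, pushing it off $\text{Bd }C$ using the careful almost approximation hypothesis, and then pushing it forward through an associated map $f_h$. The transfer is the clean part; the push off $\text{Bd }C$ is where the approximation hypothesis does its work.

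First I would set $\hat m = h^{-1}\circ m : B^2 \to \text{Bd }C$ and fix an associated map $f_h : C \to D$. Since $\hat m(B^2) \subset \text{Bd }C$ and $f_h|\text{Bd }C = h$, we have $f_h\circ \hat m = m$. The goal then becomes to produce a map $\hat m' : B^2 \to C$ with $\hat m'|\partial B^2 = \hat m|\partial B^2$, with $\rho(\hat m', \hat m)$ as small as we like (say $<\epsilon$), and with $\hat m'(B^2)\cap \text{Bd }C \subset N_\epsilon\big(\hat m(\partial B^2)\big)$. Granting such an $\hat m'$, set $m' = f_h\circ \hat m'$. Then $m'|\partial B^2 = f_h\circ\hat m|\partial B^2 = m|\partial B^2$; moreover, because $f_h$ is uniformly continuous on the compact set $C$, choosing $\epsilon$ small forces $\rho(m', m) < \delta$. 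Finally, since $f_h^{-1}(\text{Bd }D)=\text{Bd }C$, a point $m'(x)$ lies in $\text{Bd }D$ exactly when $\hat m'(x)\in \text{Bd }C$, hence within $\epsilon$ of some $\hat m(p)$ with $p\in\partial B^2$; applying $f_h$ and uniform continuity once more places $m'(x)$ in $N_\delta\big(m(\partial B^2)\big)$. Thus everything reduces to the construction of $\hat m'$.

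To build $\hat m'$, I would regard $C$ as a closed $n$-cell-complement in $S^n$ and apply the careful almost approximation hypothesis with a small parameter $\epsilon' \ll \epsilon$, obtaining a locally flat sphere $\theta(\text{Bd }C)$ within $\epsilon'$ of $\text{Bd }C$ whose complementary set $\theta(\text{Bd }C) - \text{Int }C$ has only components of small diameter and whose intersection with $\text{Bd }C$ is covered by the interiors of finitely many disjoint $(n-1)$-cells $E_1,\dots,E_k$ of diameter $<\epsilon'$. I would compose with $\theta$ to move $\hat m$ onto this locally flat sphere, then use its bicollar to push the image slightly to the inner side, into the interior of the $n$-cell $A$ bounded by $\theta(\text{Bd }C)$, tapering the push to zero along a thin collar of $\partial B^2$ so that the boundary circle is fixed. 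This produces a map that is $O(\epsilon')$-close to $\hat m$ and whose image meets $\text{Bd }C$ only inside finitely many small balls surrounding the anchor cells $E_i$, together with a controlled neighborhood of $\hat m(\partial B^2)$.

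The main obstacle is the behavior at those anchor cells. Near each $E_i$ the sphere $\theta(\text{Bd }C)$ crosses $\text{Bd }C$ and may poke out of $C$, so a naive inward push can drive the disk into the small regions of $A$ lying in $S^n - C$ rather than into $\text{Int }C$. Because the offending configuration is confined to finitely many disjoint balls of diameter $O(\epsilon')$, I would treat each separately: within such a ball the frontier curves of the image on $\theta(\text{Bd }C)$ lie in $\theta(\text{Bd }C)\cap\text{Int }C$, and, using the local flatness of $\theta(\text{Bd }C)$ to supply a collar on the $\text{Int }C$ side, I would refill the corresponding part of the disk by a small singular disk pushed past the poke into $\text{Int }C$. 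The disjointness, finiteness, and small diameter guaranteed by the word \emph{carefully} are exactly what keep these corrections small (hence $\rho(\hat m', \hat m) < \epsilon$) and confine $\hat m'(B^2)\cap \text{Bd }C$ to $N_\epsilon\big(\hat m(\partial B^2)\big)$. I expect the verification that each refill can be carried out inside $\text{Int }C$, past the poke and without enlarging the image beyond the prescribed neighborhood, to be the delicate step; once it is in place, the reduction above completes the proof.
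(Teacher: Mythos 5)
Your reduction is exactly the one the paper uses, and the part of it you write out is correct. The paper's proof has two ingredients: (i) quote Lemma 5.2 of \cite{Daverman 4}, which asserts precisely the desired conclusion for the map $h^{-1}m:B^2 \to \mathrm{Bd}\,C$ whenever $\mathrm{Bd}\,C$ can be carefully almost approximated from $\mathrm{Int}\,C$; and (ii) a one-line remark that the property transfers to $D$ via $f_h$. Your first two paragraphs carry out (ii) carefully and correctly: the identities $f_h\hat m = m$ and $f_h^{-1}(\mathrm{Bd}\,D)=\mathrm{Bd}\,C$, plus uniform continuity of $f_h$ on the compact set $C$, are all that is needed, and you use exactly these.

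The gap is in your attempted proof of the $C$-side statement, which is the entire technical content of the theorem and which the paper does not prove but cites. Your construction of $\hat m'$ breaks down at the step you yourself flag as delicate, and the breakdown is not merely an omitted verification: the mechanism you propose cannot work as stated. Near an anchor cell $E_i$, a component $P$ of $\theta(\mathrm{Bd}\,C)-\mathrm{Int}\,C$ meets the open set $S^n - C$, so a whole neighborhood of points of $P$ lies outside $C$; consequently \emph{both} sides of the bicollar of $\theta(\mathrm{Bd}\,C)$ contain points of $S^n-C$ there, and "using the local flatness of $\theta(\mathrm{Bd}\,C)$ to supply a collar on the $\mathrm{Int}\,C$ side" is not available near a poke. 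For the same reason, the natural cap for your frontier curve --- the small disk in $\theta(\mathrm{Bd}\,C)$ containing the poke, pushed slightly off the sphere --- can remain outside $C$. A small loop in $\theta(\mathrm{Bd}\,C)\cap\mathrm{Int}\,C$ near a poke has no collar-type reason to bound a small singular disk in $\mathrm{Int}\,C$; producing such fills requires routing around the wild spots using the disjointness and placement of the anchor cells in $\mathrm{Bd}\,C$, which is exactly the argument of Lemma 5.2 of \cite{Daverman 4}. So your proposal is complete only modulo that lemma; as a self-contained proof it is missing its crucial step, and the refill strategy as described would need to be replaced, not just elaborated.
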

\begin{proof}
By Lemma 5.2 of \cite{Daverman 4}, the same conclusion holds for the map 
$h^{-1}m:B^2 \to \text{Bd }C$. The properties in $C$ readily transfer to $D$ via $f_h$.
\end{proof}

\section{Preservation of Wildness Comparisons under Certain Operations}
In this section we shall show the "at least as wild as" property is preserved under suspension, rounded product 
and spin operations but is not preserved under the inflation operation.

\begin{theorem} 
If $(C,D,h) \in \mathscr{W}_n$, then so is $\big(\Sigma(C),\Sigma(D),\Sigma(h)\big)$, where $\Sigma$ denotes the 
suspension operator.
\end{theorem}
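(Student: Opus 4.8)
The plan is to produce the associated map for the suspended triple by simply suspending $f_h$, after first pinning down what the suspensions are as crumpled cubes. Write $\Sigma(X)=(X\times[-1,1])/{\sim}$, where $X\times\{1\}$ and $X\times\{-1\}$ are each collapsed to a point (the two \emph{suspension points}), and let $[x,t]$ denote the class of $(x,t)$. Realizing $C$ as the closure of a complementary domain of $\text{Bd }C$ in $S^n$ and viewing $S^n$ as the equator of $S^{n+1}=\Sigma(S^n)$, one checks that $\Sigma(\text{Bd }C)$ is an embedded $n$-sphere in $S^{n+1}$ and that $\Sigma(C)$ is the closure of one of its complementary domains, namely $\overline{\text{Int }C\times(-1,1)}$; hence $\Sigma(C)$ is a crumpled $(n+1)$-cube with $\text{Bd }\Sigma(C)=\Sigma(\text{Bd }C)$ and $\text{Int }\Sigma(C)=\{[c,t]:c\in\text{Int }C,\ |t|<1\}$. (Note that $C$ need not be a manifold, but that is irrelevant: the definition of crumpled cube only asks for a complementary-domain closure, which is exactly what we have exhibited.) The same applies to $D$, so $\big(\Sigma(C),\Sigma(D),\Sigma(h)\big)$ is a legitimate candidate for membership in $\mathscr{W}_{n+1}$, with $\Sigma(h)$ the (homeomorphic) suspension of $h$.

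Next I would define the candidate associated map by $\Sigma(f_h)([c,t])=[f_h(c),t]$. Continuity is the one point deserving care, precisely at the suspension points where all of $C$ degenerates: the product map $(c,t)\mapsto(f_h(c),t)$ on $C\times[-1,1]$ is continuous and carries each collapsed level $C\times\{\pm1\}$ into $D\times\{\pm1\}$, so it respects the identifications and the universal property of the quotient yields continuity of $\Sigma(f_h)$. Restricting to the boundary, for $c\in\text{Bd }C$ we have $f_h(c)=h(c)$, whence $\Sigma(f_h)([c,t])=[h(c),t]=\Sigma(h)([c,t])$; thus $\Sigma(f_h)$ extends $\Sigma(h)$.

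Finally I would verify $\Sigma(f_h)^{-1}\big(\text{Bd }\Sigma(D)\big)=\text{Bd }\Sigma(C)$ by a short case analysis on $t$. At a suspension point ($|t|=1$) the map sends a pole of $\Sigma(C)$ to a pole of $\Sigma(D)$, and both poles lie in their respective boundaries, so these classes cause no trouble. For $|t|<1$, the class $[c,t]$ lands in $\text{Bd }\Sigma(D)=\Sigma(\text{Bd }D)$ exactly when $f_h(c)\in\text{Bd }D$, and the defining hypothesis $f_h^{-1}(\text{Bd }D)=\text{Bd }C$ turns this into $c\in\text{Bd }C$, i.e.\ $[c,t]\in\Sigma(\text{Bd }C)=\text{Bd }\Sigma(C)$; combining the cases gives the equality. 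I do not expect a serious obstacle here: once $\Sigma(C)$ and $\Sigma(D)$ are recognized as crumpled cubes, the argument is purely formal, and the only genuinely delicate issue is the behavior at the two suspension points, which is settled by observing that they are forced into the boundary and matched to one another by $\Sigma(f_h)$.
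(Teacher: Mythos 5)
Your proposal is correct and is exactly the paper's argument: the paper's entire proof reads ``The proof is elementary: suspend an associated map $f_h$,'' which is precisely your construction $\Sigma(f_h)([c,t])=[f_h(c),t]$. Your additional verifications (that $\Sigma(C)$ is a crumpled $(n+1)$-cube with $\mathrm{Bd}\,\Sigma(C)=\Sigma(\mathrm{Bd}\,C)$, continuity at the suspension points, and the preimage condition) simply supply the routine details the paper leaves to the reader.
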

\noindent The proof is elementary: suspend an associated map $f_h$.

The following example shows that the inflation operator does not preserve the "at least as wild as" property.
\begin{example} 
If $C$ is at least as wild as  $D$  and  $\text{Infl}(C)$ is a crumpled cube
(equivalently for $n > 4$, $C$ has the Disjoint Disks Property), then 
$\mathrm{Infl}(C)$ might not be at 
least as wild as $\mathrm{Infl}(D)$. Suppose $D$  is a crumpled cube whose boundary is everywhere wild, and its inflation is also a crumpled cube. Then the only crumpled cube $C$ for which $\text{Infl}(C)$ is at least as wild as  
$\text{Infl}(D)$ is  $D$ itself. If  $H: \text{Bd } \text{Infl}(C)\rightarrow \text{Bd } \text{Infl}(D)$ is a 
homeomorphism, $H$ must send the wild set of $\text{Infl}(C)$ to cover the wild set of $\text{Infl}(D)$; that is, 
$H(\text{Bd } C \times \{0\}) \supset \text{Bd }D \times \{0\}$. Since no proper subset of an $(n-1)$-sphere
can cover another $(n-1)$-sphere, it follows that 
$H(\text{Bd } C \times \{0\}) = \text{Bd }D \times \{0\}$. As a result,
$H$ must send either of the obvious copies of $C$ in 
the boundary of the first inflation onto a copy of $D$ in the second.
\end{example}

\begin{definition} 
Given a crumpled $n$-cube $C$, we define its \textit{rounded product} with $I$,
denoted $\text{Round}(C \times I)$, as the crumpled $(n+1)$-cube in $\mathbb{R}^{n+1}$ bounded by
$\lambda\big(\text{Bd }(C \times I)\big)$ where $\lambda$ is an embedding that agrees with inclusion
on $(\text{Bd } C) \times [\frac{1}{3},\frac{2}{3}]$, is locally flat elsewhere, and where the image of $\lambda$ 
misses $\text{Int }C \times [\frac{1}{3},\frac{2}{3}]$.  Equivalently, $\text{Round}(C \times I)$ is obtained by 
attaching $(n+1)$-cells $B_+$ and $B_-$ to $C \times [\frac{1}{3},\frac{2}{3}]$ along $C\times \{\frac{2}{3}\}$ and $C \times \{\frac{1}{3}\}$, respectively, with both $\text{Bd }B_+ - (\text{Int }C \times \{\frac{2}{3}\})$ and $\text{Bd }B_- - (\text{Int }C \times \{\frac{1}{3}\})$ required to be $n$-cells.
\end{definition}

\begin{theorem} 
If $(C,D,h) \in \mathscr{W}_n$, then  $$\big(\mathrm{Round}(C \times I),\mathrm{Round}(D \times I),\mathrm{Round}(h \times \mathbbm{1})\big) \in \mathscr{W}_{n+1},$$ where $\mathrm{Round}(h \times \mathbbm{1})$ denotes any homeomorphisms between the
boundaries that extend $h \times \mathbbm{1}:\mathrm{Bd}$ $C \times [\frac{1}{3},\frac{2}{3}] \to \mathrm{Bd}$ $D 
\times [\frac{1}{3},\frac{2}{3}]$.
\end{theorem}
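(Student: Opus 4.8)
The plan is to build an associated map $F$ for the rounded products piece-by-piece, mirroring the three-piece description of $\mathrm{Round}$ given above. Write $\mathrm{Round}(C\times I) = M_C \cup B_+ \cup B_-$, where $M_C = C\times[\tfrac13,\tfrac23]$ is the middle block and $B_\pm$ are the two attached $(n+1)$-cells, and let $E_+ = \mathrm{Bd}\,B_+ - \mathrm{Int}(C\times\{\tfrac23\})$ and $E_- = \mathrm{Bd}\,B_- - \mathrm{Int}(C\times\{\tfrac13\})$ be the two free $n$-cell faces. Then the boundary sphere decomposes as
$$\mathrm{Bd}\,\mathrm{Round}(C\times I) = \big(\mathrm{Bd}\,C\times[\tfrac13,\tfrac23]\big)\cup E_+\cup E_-,$$
the first summand carrying all the wildness and meeting $E_+$ along $\mathrm{Bd}\,C\times\{\tfrac23\}=\mathrm{Bd}\,E_+$ and $E_-$ along $\mathrm{Bd}\,C\times\{\tfrac13\}=\mathrm{Bd}\,E_-$. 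Adopt the analogous notation $M_D, B_\pm', E_\pm'$ for $D$. Since each $E_\pm$ and $E_\pm'$ is an $n$-cell and $h\times\mathbbm{1}$ already matches their boundary spheres, a homeomorphism $\mathrm{Round}(h\times\mathbbm{1})$ of the boundary spheres extending $h\times\mathbbm{1}$ certainly exists; fix any such one, and write $\Phi_\pm$ for its (homeomorphic) restrictions $E_\pm\to E_\pm'$.

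On the middle block I would simply set $F|M_C = f_h\times\mathbbm{1}:C\times[\tfrac13,\tfrac23]\to D\times[\tfrac13,\tfrac23]$. This restricts to $h\times\mathbbm{1}$ on the wild part of the boundary, restricts to $f_h$ on each of $C\times\{\tfrac13\}$ and $C\times\{\tfrac23\}$, and — because $f_h^{-1}(\mathrm{Bd}\,D)=\mathrm{Bd}\,C$ — sends $\mathrm{Int}\,C\times[\tfrac13,\tfrac23]$ into $\mathrm{Int}\,\mathrm{Round}(D\times I)$ with $(f_h\times\mathbbm{1})^{-1}(\mathrm{Bd}\,D\times[\tfrac13,\tfrac23]) = \mathrm{Bd}\,C\times[\tfrac13,\tfrac23]$. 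To extend over the cap $B_+$, note that the two prescriptions already in hand, namely $f_h$ on the attaching face $C\times\{\tfrac23\}$ and $\Phi_+$ on the free face $E_+$, agree on the shared $(n-1)$-sphere $\mathrm{Bd}\,C\times\{\tfrac23\}$ (both equal $h$ there), so they assemble into a continuous map $g_+ : \mathrm{Bd}\,B_+\to\mathrm{Bd}\,B_+'$; similarly one gets $g_-$.

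To fill in the caps I would cone. Since $B_+$ and $B_+'$ are $(n+1)$-cells, identify each with the cone on its boundary sphere and let $F|B_+$ be the cone of $g_+$ (cone point to cone point, radially); define $F|B_-$ as the cone of $g_-$. Such a map carries $\mathrm{Int}\,B_+$ into $\mathrm{Int}\,B_+'$ and restricts to $g_+$ on $\mathrm{Bd}\,B_+$, and the three pieces agree on the overlaps $C\times\{\tfrac13\}$ and $C\times\{\tfrac23\}$ by construction, so $F=(f_h\times\mathbbm{1})\cup(\mathrm{cone}\,g_+)\cup(\mathrm{cone}\,g_-)$ is a well-defined continuous (hence, between compacta, proper) map extending $\mathrm{Round}(h\times\mathbbm{1})$ on the boundary.

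The one point that needs genuine care — and the only place where the hypothesis $f_h^{-1}(\mathrm{Bd}\,D)=\mathrm{Bd}\,C$ does real work — is verifying the preimage condition $F^{-1}\big(\mathrm{Bd}\,\mathrm{Round}(D\times I)\big)=\mathrm{Bd}\,\mathrm{Round}(C\times I)$, specifically on the caps. The subtlety is that the attaching face $C\times\{\tfrac23\}$ of $B_+$ is \emph{not} part of $\mathrm{Bd}\,\mathrm{Round}(C\times I)$ (save for its rim), so the cone must not pull extra boundary back across it. Here one checks that within the cap $\mathrm{Bd}\,\mathrm{Round}(D\times I)\cap B_+' = E_+'$, whence $F^{-1}(E_+')\cap B_+ = g_+^{-1}(E_+')$ because the cone sends interior to interior; and $g_+^{-1}(E_+') = E_+$, since $\Phi_+$ carries $E_+$ homeomorphically onto $E_+'$ while $f_h^{-1}\big(E_+'\cap(D\times\{\tfrac23\})\big) = f_h^{-1}(\mathrm{Bd}\,D\times\{\tfrac23\}) = \mathrm{Bd}\,C\times\{\tfrac23\}\subset E_+$. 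Combining the cap computations with the middle-block computation above (and noting the two caps are disjoint and each meets the side only along its rim) yields $F^{-1}\big(\mathrm{Bd}\,\mathrm{Round}(D\times I)\big)=\mathrm{Bd}\,\mathrm{Round}(C\times I)$ exactly, so $F$ is an associated map and the triple lies in $\mathscr{W}_{n+1}$.
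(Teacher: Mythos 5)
Your proposal is correct and takes essentially the same approach as the paper: the paper's proof likewise defines the associated map as $f_h\times\mathbbm{1}$ on the middle block $C\times[\tfrac13,\tfrac23]$ and then extends over the two attached $(n+1)$-cells so that no interior point of a cap is sent to a boundary point. Your coning construction and preimage verification simply make explicit the cap extension that the paper asserts without detail.
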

\begin{proof}
Since $(C,D,h) \in \mathscr{W}_n$, we have a typical associated map $f_h:C \to D$ extending $h$. Define 
$$F:\mathrm{Round}(C \times I) \to \mathrm{Round}(D \times I)$$ 
as $f_h \times \mathbbm{1}: C \times [\frac{1}{3},\frac{2}{3}] \to D \times [\frac{1}{3},\frac{2}{3}]$; extend to the $(n+1)$-cells 
$B_+$ and $\beta_+$ attached
along $C \times \{\frac{2}{3}\}$ and $D \times \{\frac{2}{3}\}$, respectively, so that no point of $\text{Int }B_+$ is sent to a boundary point of $B_+$, and do the same for the $(n+1)$-cells attached at the $\frac{1}{3}$-levels.	
\end{proof}

We introduce a method for spinning a crumpled $n$-cube $C$ that sometimes,
but not always, produces a crumpled $(n+k)$-cube.  It is closely related to the method of spinning a decomposition described in Section 28 of \cite{Daverman 1},
and we will make use of results from that section. The procedure depends
on a choice of an $(n-1)$-cell $\beta$ in $\text{Bd }C$.  For simplicity we will
tolerate using only those cells $\beta$ that are standardly embedded in $\text{Bd }
C$.  For $k>0$ the $k$-\textit{spin} of $C$ relative to $\beta$ is the decomposition
space $\text{Sp}^k(C,\beta) = C \times S^k /\mathscr{G}_\beta$, where $\mathscr{G}_\beta$
is the decomposition whose nondegenerate elements are $\{c \times S^k | c
\in \beta\}$.  This is a generalized $(n+k)$-manifold with boundary, and its
boundary is the image of $(\text{Bd }C - \text{Int }\beta) \times S^k$, the $k$-spin of the
$(n-1)$-cell $\text{Bd }C - \text{Int }\beta$, which is an $(n+k-1)$-sphere.  As a result,
$\text{Sp}^k(C,\beta)$ is a crumpled $(n+k)$-cube if and only if it embeds in
$S^{n+k}$.

Given crumpled $n$-cubes $C$ and $D$ plus a homeomorphism $h$ of $\text{Bd }C$ to $\text{Bd }D$, we
have a naturally defined homeomorphism $\text{Sp}^k(h)$ between the boundaries of certain $k$-spins.  To spell this out, let $q_C:(\text{Bd }C - \text{Int }\beta)  \times S^k
\to \text{Bd }\text{Sp}^k(C,\beta)$ and $q_D:\big(\text{Bd }D - \text{Int }h(\beta)\big) \times S^k 
\to \text{Bd }\text{Sp}^k(D,h\big(\beta)\big)$ denote the decomposition maps, appropriately
restricted. Define 
$$\text{Sp}^k(h): \text{Bd }\text{Sp}^k(C,\beta) \to \text{Bd }\text{Sp}^k(D,h(\beta))$$
as $q_D (h \times \mathbbm{1}) (q_C)^{-1}$, where
$h \times \mathbbm{1}: (\text{Bd }C - \text{Int }\beta)  \times S^k \to \big(\text{Bd }D - \text{Int }h(\beta)\big)
\times S^k$.

There is another effective way of studying $\text{Sp}^k(C,\beta)$.  Attach an exterior collar
$\lambda(\text{Bd }C \times [0,1])$ to $C$, with $\lambda(c,0) = c$ for all $c \in \text{Bd }C$.
 The union of $C$ and the collar is an $n$-cell $B^n$.  Let $G$ be the decomposition
of $B^n$ consisting of points and the arcs $\lambda(c \times [0,1])$, $c \in
\beta$. (The admissibility is satisfied.)   The $k$-spin of $B^n$ is topologically $S^{n+k}$
and $G$ gives rise to a cell-like decomposition $\text{Sp}^k(G)$.  The decomposition
space $\text{Sp}^k(B^n)/\text{Sp}^k(G)$ contains $\text{Sp}^k(C,\beta)$.  Hence, if $\text{Sp}^k(B^n)/\text{Sp}^k(G)$
is the $(n+k)$-sphere, then $\text{Sp}^k(C,\beta)$, being bounded by an
$(n+k-1)$-sphere there, must be a crumpled $(n+k)$-cube.  According to \cite[Theorem 28.9]{Daverman 1}, when $n+k \geq 5$,  
$\text{Sp}^k(B^n)/\text{Sp}^k(G)$ is topologically $S^{n+k}$
if and only if every pair of maps $\mu_1,\mu_2:I^2 \to B^n/G$ can be
approximated, arbitrarily closely,  by maps $\mu'_1,\mu'_2:I^2 \to B^n/G$ such that
\begin{equation}
\mu'_1(I^2) \cap \mu'_2(I^2) \cap \pi_G(\partial B^n) = \emptyset,
\tag{*}
\end{equation}
where $\pi_G$ denotes the decomposition map $\pi_G:B^n \to B^n/G$.  In our case,
(*) can be replaced with
\begin{equation}
\mu'_1(I^2) \cap \mu'_2(I^2) \cap \pi_G(\beta) = \emptyset,
\tag{**}
\end{equation}
since all nondegenerate elements of $G$ meet $\beta$, which means that both
singular disks $\mu'_1(I^2)$ and $\mu'_2(I^2)$ can be adjusted to avoid
$\pi_G(\partial B^n) - \pi_G(\beta)$.

In summary, $\text{Sp}^k(C,\beta)$ is a crumpled $(n+k)$-cube if and only if it
satisfies the Disjoint Disks Property at $\beta$, a property defined by
Condition (**).  Of course, the usual Disjoint Disks Property is more than
enough to assure that (**) holds.

\begin{lemma}  
Suppose $(C,D,h) \in \mathscr{W}_n$ and $\beta$ is an $(n-1)$-cell
standardly embedded in $\mathrm{Bd}$ $C$  such that $\mathrm{Sp}^k(C,\beta)$ is a crumpled
$(n+k)$-cube.  Then $\mathrm{Sp}^k\big(D,h(\beta)\big)$ is also a crumpled $(n+k)$-cube.
\end{lemma}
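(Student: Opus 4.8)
The plan is to verify the criterion established just above: since $n+k\ge 5$, the space $\mathrm{Sp}^k(D,h(\beta))$ is a crumpled $(n+k)$-cube if and only if it satisfies the Disjoint Disks Property at $h(\beta)$, i.e.\ Condition (**) for the pair $(D,h(\beta))$. Its boundary is automatically an $(n+k-1)$-sphere, being the $k$-spin of the $(n-1)$-cell $\mathrm{Bd}\,D-\mathrm{Int}\,h(\beta)=h(\mathrm{Bd}\,C-\mathrm{Int}\,\beta)$. Under the identification of $B^n/G$ with $C$ (the decomposition collapses the collar over $\beta$ back onto $\beta$), with $\pi_G(\beta)$ corresponding to $\beta\subset\mathrm{Bd}\,C$, Condition (**) for $(C,\beta)$ says exactly that any two singular disks in $C$ may be approximated so that their images meet $\beta$ in the empty set; this is precisely what the hypothesis that $\mathrm{Sp}^k(C,\beta)$ is a crumpled cube supplies. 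So I must promote ``$C$ has the Disjoint Disks Property at $\beta$'' to ``$D$ has it at $h(\beta)$,'' using an associated map $f_h$.

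The engine of the transfer is a point-preimage identity. Because $f_h^{-1}(\mathrm{Bd}\,D)=\mathrm{Bd}\,C$ and $f_h|\mathrm{Bd}\,C=h$ is a homeomorphism, every $y\in\mathrm{Bd}\,D$ has the single preimage $f_h^{-1}(y)=\{h^{-1}(y)\}$; in particular $f_h^{-1}\big(h(\beta)\big)=\beta$. Consequently, for any subsets $A_1,A_2\subset C$,
\[ f_h(A_1)\cap f_h(A_2)\cap h(\beta)=h\big(A_1\cap A_2\cap\beta\big). \]
Thus if $A_1\cap A_2\cap\beta=\emptyset$ then $f_h(A_1)\cap f_h(A_2)\cap h(\beta)=\emptyset$: pushing a $\beta$-separated pair forward by $f_h$ produces an $h(\beta)$-separated pair, and the non-injectivity of $f_h$ is harmless because no new coincidences can occur over $h(\beta)$. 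This reduces the problem to realizing a given pair of disks in $D$, near $\mathrm{Bd}\,D$, as $f_h$-images of disks in $C$ clustered near $\beta$.

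To carry this out I would argue as in the proof of Theorem 3.3. Given $\nu_1,\nu_2\colon I^2\to D$ and $\delta>0$, fix a homotopy taming set $T$ for $C$ (every crumpled cube has one, \cite{Daverman 7}), triangulate each $I^2$ so finely that each $\nu_i(\Delta)$ is tiny, and adjust the $\nu_i$ so that the images of the $1$-skeleta avoid $\mathrm{Bd}\,D$. For every $2$-simplex $\Delta$ with $\nu_i(\Delta)$ meeting $\mathrm{Bd}\,D$, Lemma 3.1 lets me pull the loop $\nu_i(\partial\Delta)$ back, up to a small homotopy, to a loop $L_\Delta$ in $\mathrm{Int}\,C$ near $\beta$, and $T$ lets me cap $L_\Delta$ with a singular disk $e_\Delta$ whose $f_h$-image is small and lies in $h(T)\cup\mathrm{Int}\,D$. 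Replacing $\nu_i|\Delta$ by this cap produces maps $\hat\nu_i$ that are $(\delta/2)$-close to $\nu_i$, agree with $\nu_i$ on the simplices sent into $\mathrm{Int}\,D$, and near $\mathrm{Bd}\,D$ equal $f_h$ composed with maps $\mu_i$ of the boundary-meeting subcomplexes of $I^2$ into $C$, with $\mu_i$ supported near $\beta$.

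Finally I would separate over $\beta$ and push forward. The maps $\mu_1,\mu_2$ carry singular $2$-complexes into $C$; applying the relative, two-complex form of Condition (**) for $(C,\beta)$---fixing the interface loops $\nu_i(\partial\Delta)$, which already lie off $\beta$---yields arbitrarily small approximations $\mu_1',\mu_2'$ whose images meet $\beta$ in the empty set. Defining $\nu_i'$ to equal $\nu_i$ on the interior simplices and $f_h\circ\mu_i'$ near $\mathrm{Bd}\,D$ gives maps $\delta$-close to $\nu_i$, and by the displayed identity their images meet $h(\beta)$ precisely in the $h$-image of the intersection of the two caps over $\beta$, which is empty. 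This is Condition (**) for $(D,h(\beta))$, so $\mathrm{Sp}^k(D,h(\beta))$ is a crumpled $(n+k)$-cube. I expect the main obstacle to be this last realization together with the relative $2$-complex upgrade of Condition (**): one must keep the capping disks $e_\Delta$ small enough (in $f_h$-image) to preserve $\delta$-closeness while retaining enough room near the wild set $\beta$ to push the two caps apart. The point-preimage identity is what guarantees that, once this separation is achieved in $C$, nothing is spoiled upon pushing forward to $D$.
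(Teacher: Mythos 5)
Your proposal is correct in substance, but it takes a genuinely different route from the paper's, which is only three lines long: from the hypothesis that $\mathrm{Sp}^k(C,\beta)$ is a crumpled cube, the paper extracts two homotopy taming sets $T_1,T_2$ for $C$ with $T_1\cap T_2\cap\beta=\emptyset$, pushes them forward via Theorem 3.5 to homotopy taming sets $h(T_1),h(T_2)$ for $D$, and notes that injectivity of $h$ gives $h(T_1)\cap h(T_2)\cap h(\beta)=\emptyset$, which yields Condition (**) at $h(\beta)$ immediately (approximate one disk into $h(T_1)\cup\mathrm{Int}\,D$ and the other into $h(T_2)\cup\mathrm{Int}\,D$). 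You instead transfer Condition (**) itself through an associated map $f_h$, using the point-preimage identity $f_h(A_1)\cap f_h(A_2)\cap h(\beta)=h(A_1\cap A_2\cap\beta)$ --- which is correct, since $f_h^{-1}(\mathrm{Bd}\,D)=\mathrm{Bd}\,C$ and $f_h|\mathrm{Bd}\,C=h$ --- together with the pull-back machinery of Lemma 3.1 and the proof of Theorem 3.5 (that is the result whose proof you are imitating, not ``Theorem 3.3''). Your route buys something real: the paper's opening assertion, that crumpledness of the spin produces a \emph{mismatched pair} of taming sets, is the nontrivial direction of that characterization and is stated without proof, whereas you need only one homotopy taming set (which every crumpled cube has) plus the stated equivalence between crumpledness of $\mathrm{Sp}^k(C,\beta)$ and the DDP at $\beta$; the paper's route buys brevity and reuses Theorem 3.5 as a black box. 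Two points in yours need repair, both fixable. First, $B^n/G$ is not ``identified with $C$'': the decomposition collapses the collar arcs \emph{over} $\beta$, so $B^n/G$ is $C$ with a partial exterior collar (equivalently, an $n$-cell attached along $\mathrm{Cl}(\mathrm{Bd}\,C-\beta)$); it deformation retracts to $C$ but need not be homeomorphic to it --- harmless, since the paper already recasts (**) as the DDP at $\beta$. Second, the ``relative, two-complex form of (**)'' that you flag as the main obstacle does follow from (**) as stated: amalgamate the finitely many caps on each side into a single singular disk by connecting tubes, apply (**), then restore the original interface circles by small annular homotopies; since those circles lie in the open set $\mathrm{Int}\,C$, the restoring annuli stay in $\mathrm{Int}\,C$ and can never contribute intersection points in $\beta\subset\mathrm{Bd}\,C$, so the rel-boundary, small, multi-disk version holds and your push-forward step closes the argument.
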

\begin{proof}
Since $\text{Sp}^k(C,\beta)$ is a crumpled cube, $C$ contains homotopy taming
sets $T_1,T_2$ such that
$T_1 \cap T_2 \cap \beta = \emptyset$.
Then $h(T_1), h(T_2)$ are homotopy taming sets for $D$ and
$h(T_1) \cap h(T_2) \cap h(\beta) = h(T_1 \cap T_2 \cap \beta) = \emptyset$,
which assures that $\text{Sp}^k\big(D,h(\beta)\big)$ is a crumpled $(n+k)$-cube.
\end{proof}

\begin{theorem} 
Suppose $(C,D,h) \in \mathscr{W}_n$ and $\beta$ is an $(n-1)$-cell
standardly embedded in $\text{Bd }C$  such that $\mathrm{Sp}^{n+k}(C,\beta)$ is a crumpled
$(n+k)$-cube.  Then $\big(\mathrm{Sp}^k(C,\beta),\mathrm{Sp}^k\big(D,h(\beta)\big),\mathrm{Sp}^k(h)\big)\in \mathscr{W}_{n+k}$.
\end{theorem}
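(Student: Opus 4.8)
The plan is to produce an associated map for the triple by spinning the map $f_h$. The preceding lemma already guarantees that $\text{Sp}^k\big(D,h(\beta)\big)$ is a crumpled $(n+k)$-cube, and $\text{Sp}^k(h)$ has been defined as a homeomorphism of the two spin boundaries, so the entire burden is to exhibit a map
$$F:\text{Sp}^k(C,\beta) \to \text{Sp}^k\big(D,h(\beta)\big)$$
that extends $\text{Sp}^k(h)$ and satisfies $F^{-1}\big(\text{Bd }\text{Sp}^k(D,h(\beta))\big) = \text{Bd }\text{Sp}^k(C,\beta)$.

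Let $f_h:C \to D$ be an associated map for $(C,D,h)$, so that $f_h|\text{Bd }C = h$ and $f_h^{-1}(\text{Bd }D) = \text{Bd }C$. Consider $f_h \times \mathbbm{1}:C \times S^k \to D \times S^k$ together with the decomposition maps $\pi_C:C \times S^k \to \text{Sp}^k(C,\beta)$ and $\pi_D:D \times S^k \to \text{Sp}^k(D,h(\beta))$, whose boundary restrictions are the maps $q_C, q_D$ used to define $\text{Sp}^k(h)$. Because $f_h$ extends $h$, for every $c \in \beta$ we have $(f_h \times \mathbbm{1})(c \times S^k) = h(c) \times S^k$, which is a nondegenerate element of $\mathscr{G}_{h(\beta)}$. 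Thus $f_h \times \mathbbm{1}$ carries each element of $\mathscr{G}_\beta$ into an element of $\mathscr{G}_{h(\beta)}$, so $\pi_D (f_h \times \mathbbm{1})$ is constant on the fibers of $\pi_C$ and descends to a map $F$ with $F\pi_C = \pi_D(f_h \times \mathbbm{1})$. Since $\pi_C$ is a quotient map, $F$ is continuous, and evaluating on a boundary point $q_C(c,s)$ with $c \in \text{Bd }C - \text{Int }\beta$ gives $F(q_C(c,s)) = \pi_D(h(c),s) = q_D(h \times \mathbbm{1})(c,s)$, so $F$ restricts to $\text{Sp}^k(h)$ on $\text{Bd }\text{Sp}^k(C,\beta)$.

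The crux is the preimage condition, and here the defining property $f_h^{-1}(\text{Bd }D) = \text{Bd }C$ does the work. Recall that $\text{Bd }\text{Sp}^k(C,\beta) = \pi_C\big((\text{Bd }C - \text{Int }\beta) \times S^k\big)$, so the interior of the spin is the image of $\text{Int }C \times S^k$ together with the spine points $\pi_C(c \times S^k)$ for $c \in \text{Int }\beta$, and analogously for $D$. I would then check the identity according to the location of the source point relative to $\beta$. A product point $\pi_C(c,s)$ with $c \in \text{Int }C$ has $f_h(c) \in \text{Int }D$, so its image lies in the interior; a point with $c \in \text{Bd }C - \beta$ has $f_h(c) = h(c) \in \text{Bd }D - h(\beta)$, so its image lies in the boundary. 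A spine point $\pi_C(c \times S^k)$ with $c \in \text{Int }\beta$ maps to the spine point over $h(c) \in \text{Int }h(\beta)$, hence into the interior, while one with $c \in \partial \beta$ maps to the spine point over $h(c) \in \partial h(\beta) \subset \text{Bd }D - \text{Int }h(\beta)$, hence into the boundary. In every case $F$ sends a point into $\text{Bd }\text{Sp}^k(D,h(\beta))$ exactly when that point lies in $\text{Bd }\text{Sp}^k(C,\beta)$, which is the required identity.

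The main obstacle I anticipate is the bookkeeping for the spine: one must confirm that the collapsed points $\pi_C(c \times S^k)$ land in the interior precisely for $c \in \text{Int }\beta$ and on the boundary precisely for $c \in \partial \beta$, and that $f_h \times \mathbbm{1}$ respects this split. This follows from $h(\text{Int }\beta) = \text{Int }h(\beta)$ and from $f_h^{-1}(\text{Bd }D) = \text{Bd }C$ keeping interior product points interior. Everything else is formal quotient-map reasoning, so once the spine analysis is in place the triple lies in $\mathscr{W}_{n+k}$.
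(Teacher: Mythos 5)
Your proposal is correct and takes essentially the same approach as the paper: the paper's proof consists precisely of defining the associated map as $p_D (f_h \times \mathbbm{1})(p_C)^{-1}$, i.e., the spin of $f_h$ descended through the decomposition maps, exactly as you do. Your case analysis of product points versus spine points (over $\mathrm{Int}\,\beta$ and over $\partial\beta$) fills in the well-definedness and preimage verifications that the paper leaves implicit.
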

\begin{proof}
Let $p_C:C \times S^k  \to \text{Sp}^k(C,\beta)$ and $p_D:D
\times S^k \to  \text{Sp}^k\big(D,h(\beta)\big)$ denote the decomposition maps. 
Let $f_h:C \to D$ be a map associated with $(C,D,h)$.  Define $F: \text{Sp}^k(C,\beta)
\to \text{Sp}^k(D,h(\beta))$ as $p_D (f_h \times \mathbbm{1})(p_C)^{-1}$,
where $f_h \times \mathbbm{1}: C \times S^k \to D \times S^k$.
\end{proof}


\section{Strict Wildness Considerations}
As mentioned in Section 3, the definition of "at least as wild as" does not provide a partial order on the collection of crumpled $n$-cubes. To show that the relation fails to be antisymmetric, we present a pair of crumpled $n$-cubes with non-homeomorphic wild sets --- therefore assuring the two are topologically distinct --- yet where each is at least as wild as the other. 

For greater clarity, we shall introduce a standard flattening technique to construct such an example.
\begin{definition} 
Given a crumpled $n$-cube $C$  and a compact set $X\subset \text{Bd }C$, the crumpled $n$-cube $C_X$ is a 
\textit{standard flattening} relative to $X$ if there exists a homeomorphism $h_X$ of $\text{Bd }C$ to 
$\text{Bd }C_X$ such that $(C,C_X,h_X)\in \mathscr{W}_n$, 
$h_X|X = \mathbbm{1}|X$, Bd $C_X - X$ is locally collared 
in $C_X$, $C \subset C_X,$  and $C_X$ admits a 
strong deformation retraction 
$r:C_X \to C$ such that $rh_X = \mathbbm{1}|\mathrm{Bd }C$
and all nontrivial point preimages under $r$ are sent to 
$\mathrm{Bd }C - X$. 
\end{definition}

Over the course of the next three paragraphs we show the existence of 
a standard flattening for  every crumpled cube $C$ and closed subset $X$ of $\text{Bd }C$.
Treat $C$ as a closed $n$-cell- 
complement. Name an embedding $\lambda$ of $\text{Bd }C \times I$  giving a collar Bd $C$ in that 
$n$-cell $S^n-\text{Int }C$, with $\lambda(s,0)=s$ for all $s\in \text{Bd }C$. Find a continuous function 
$\mu:\text{Bd }C\rightarrow [0,1]$ such that $X=\mu^{-1}(0)$. Define $C_X$ as 
$$C\cup \{\lambda(s\times [0,\mu(s)]), \text{ where }s\in \text{Bd }C\}.$$

The retraction $r:C_X \to C$ each 
$\lambda(s\times [0,\mu(s)])$ to $\lambda(s\times 0)$;
it should be obvious why $r$ is a strong deformation 
retraction.  For later convenience, further restrict 
$\mu$ so that $\text{diam } \lambda(s' \times [0,\mu(s')]) < \text{dist}(s',X)$ for all $s' \in \text{Bd } C - X$.

Next, we will apply the (proof of the) Homotopy Extension Theorem to the pair $(C - X, \text{Bd }C - X)$ and the ANR $C_X - X$.
Consider the inclusion of $C - X$  in the target $C_X - X$ and the homotopy of $\text{Bd }C - X$ beginning with 
inclusion and ending with the homeomorphism $h_X:\text{Bd }C - X \rightarrow \text{Bd }C_X -X$ sending
$s = \lambda(s,0)$ to $\lambda\big(s,\mu(s)\big)$.
The track of this homotopy at $s$ has diameter less than 
$\text{dist}(s,X)$. There is a neighborhood $N$  of $\big((C - X) \times \{0\}\big) \cup \big((\text{Bd }C - X) \times 
I\big)$ over which our partial homotopy extends (into  $C_X - X$).  Name this extension
as  $\psi$.  Each  $s \in  \text{Bd }C - X$  has a neighborhood $O_s$ there such that $O_s \times I \subset N$ and
$\text{diam }\psi(s \times I)$ has diameter which is less than $\text{dist}(s,X)$. Let  $O$ be the union of all these 
$O_s$. Find a Urysohn function $u:C_X \rightarrow [0,1]$ with $u(\text{Bd }C-X) = \{1\}$ and
$u(C - O) = \{0\}$.  Then define  $\Psi:(C - X)  \times I \rightarrow C_X - X$ as $\Psi(s,t)=\psi(s,t \cdot u(s))$.
The claim is that  $\Psi$  extends via projection to  $X$  on $X \times I$ to give a map  $C \times I \rightarrow 
C_X$.  This function $\Psi$ is continuous at $X$: for points $y$ within $\epsilon$ of $X$, either
$\Psi(y \times I)=y$ or $\text{diam }\Psi(y \times I) < \text{dist}(y,X) < \epsilon$, so 
$\text{dist}\big(\Psi(y,t),x\big) < 2 \epsilon$, assuring continuity.

Thus, the function $h_X$ extends via the identity on $X$
to a homeomorphism $h_X:\mathrm{Bd }C \to \mathrm{Bd }C_X$.
Note that $rh_X = \mathbbm{1}|\mathrm{Bd }C$.  
The desired map  $f:C \rightarrow C_X$ is almost $f(c) = \Psi(c,1)$, where $c\in C$.
A map like this does extend the homeomorphism  $h_X$  between the boundaries. The only problem is that $f$  could send some 
point of  $\text{Int }C$  to a point of $\text{Bd }C_X - X$.  This can be fixed as we
did when improving a map  $C\rightarrow B^n$ to assure no point of $\text{Int }C$ gets sent to $\text{Bd }B^n$.
Note that with any standard flattening $C_X$ of $C$, one can regard  $X$ as a
subset of $\text{Bd }C_X$.

\begin{remark}
A standard flattening $(C,C_X,h_X)$ can easily result in a
relatively uninteresting example in which $C_X$ turns out to be an $n$-cell.  That happens
whenever $C$ has a homotopy taming set $T$ that misses $X$.  Consequently, when
$C$ is the sort of crumpled cube for which any countable dense subset $J$ of
$\text{Bd }C$ is a homotopy taming set, then $\dim X \leq n-2$ implies $C_X$ is an $n$-cell.
However, if $X$ is the closure of some open subset of $C$'s wild set, then $C_X$
is truly wild at each point of $X$.
\end{remark}

\begin{lemma} 
Suppose $C$ is a crumpled $n$-cube, $\{X,Y\}$
a pair of compact sets with $Y \subset X \subset \mathrm{Bd}$ $C$,
$(C,C_X,h_X)$ and $(C,C_Y,h_Y)$ are standard flattenings
of $C$ with respect to $X$ and $Y$, respectively, and
$\big(C_X,(C_{X})_Y,(h_{X})_Y\big)$ is the standard flattening of $C_X$ with
respect to $Y$.  Then it can be arranged that $(C_{X})_Y = C_Y$
and $(h_{X})_Y h_X = h_Y$.
\end{lemma}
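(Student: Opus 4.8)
The plan is to perform all three standard flattenings with one and the same exterior collar $\lambda$ of $\text{Bd } C$ in the $n$-cell $S^n - \text{Int } C$, and to coordinate the three height functions so that the collar of $C_X$ is literally a reparametrized piece of $\lambda$ and the flattening of $C_X$ over $Y$ merely fills in the portion of $\lambda$ that $C_Y$ already occupies. Recall that, in the notation of the construction preceding Remark 5.1, a standard flattening relative to $X$ has the form $C_X = C \cup \{\lambda(s,t) : s \in \text{Bd } C,\ 0 \le t \le \mu(s)\}$ for a continuous $\mu$ with $\mu^{-1}(0) = X$, with $h_X(s) = \lambda(s,\mu(s))$; similarly $C_Y = C \cup \{\lambda(s,t) : 0 \le t \le \nu(s)\}$ with $\nu^{-1}(0) = Y$ and $h_Y(s) = \lambda(s,\nu(s))$.

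First I would choose the two height functions compatibly. Pick continuous $\mu,\nu : \text{Bd } C \to [0,1)$ with $\mu^{-1}(0) = X$, $\nu^{-1}(0) = Y$, $\mu \le \nu$, and $\mu(s) < \nu(s)$ for every $s \notin Y$. Such a pair exists: since $Y \subset X$ we automatically have $\mu = 0 = \nu$ on $Y$, while off $Y$ one may take $\nu = \mu + \rho$ for a continuous $\rho \ge 0$ vanishing exactly on $Y$ (rescaled to keep $\nu < 1$). Build $C_X$ and $C_Y$ from this single collar. Because $h_X|X = \mathbbm{1}|X$, the set $Y \subset X$ is carried identically into $\text{Bd } C_X$, so $(C_X)_Y$ is defined.

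Next I would exhibit the collar of $\text{Bd } C_X$ as a reparametrized piece of $\lambda$. Since each $w \in \text{Bd } C_X$ is uniquely $\lambda(s,\mu(s))$, set $\lambda_X(w,\tau) = \lambda\big(s,\ \mu(s) + \tau(1-\mu(s))\big)$; as $\mu < 1$ this is an embedding of $\text{Bd } C_X \times [0,1]$ onto the part of $\lambda$ at heights $\ge \mu(s)$, hence a legitimate exterior collar of $\text{Bd } C_X$ in $S^n - \text{Int } C_X$. To flatten $C_X$ over $Y$ I would use the height function $\mu_X(w) = \tfrac{\nu(s)-\mu(s)}{1-\mu(s)}$ for $w = \lambda(s,\mu(s))$, which is continuous, takes values in $[0,1)$, and vanishes exactly when $\nu(s) = \mu(s)$, i.e. exactly when $s \in Y$, so $\mu_X^{-1}(0) = Y$ as required. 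The collar segment added over $w$ is then precisely $\{\lambda(s,t) : \mu(s) \le t \le \nu(s)\}$, whence
\[
(C_X)_Y = C_X \cup \{\lambda(s,t) : \mu(s) \le t \le \nu(s)\} = C \cup \{\lambda(s,t) : 0 \le t \le \nu(s)\} = C_Y .
\]
For the boundary homeomorphisms, $h_X(s) = \lambda(s,\mu(s)) = w$ and $(h_X)_Y(w) = \lambda_X(w,\mu_X(w)) = \lambda(s,\nu(s)) = h_Y(s)$, so $(h_X)_Y h_X = h_Y$.

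I expect the genuinely delicate points — and the ones I would verify most carefully — to be the two arrangements that make the single-collar picture work: that $\lambda_X$ is an honest collar (which is why I restrict to $\mu < 1$), and that $\mu_X^{-1}(0) = Y$, which is exactly what forces the strict inequality $\mu < \nu$ off $Y$ and is the real reason for choosing $\nu$ as above. Everything else is bookkeeping: one must also confirm that the diameter/smallness restrictions imposed in each separate flattening can be met at once, but these are merely "smallness" requirements on $\mu$, $\nu$, and $\lambda_X$ and are satisfiable simultaneously by taking the height functions small enough, so they present no obstruction. With the sets and boundary maps matched, $(C_X)_Y$ is then a standard flattening of $C_X$ over $Y$ in the sense of Definition 5.1 by construction, completing the argument.
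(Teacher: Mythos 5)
Your proposal is correct and takes essentially the same route as the paper: the paper's proof likewise restricts $\mu$ so that $\mu(\mathrm{Bd}\,C) \subset [0,1)$ and then produces a collar $\lambda_X$ on $\mathrm{Bd}\,C_X$ with $\lambda_X\big(h_X(s) \times [0,1]\big) \subset \lambda\big(s \times [\mu(s),1]\big)$, which is exactly your reparametrized piece of the single collar $\lambda$. Your explicit formulas for $\lambda_X$ and $\mu_X$, and the compatible choice of $\nu$ with $\mu < \nu$ off $Y$, simply spell out the details the paper compresses into ``the conclusion follows.''
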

\begin{proof}
The map $\mu:\text{Bd } C \to [0,1]$ producing the standard flattening  $C_X$
should be chosen so $\mu(\text{Bd }C) \subset [0,1)$.  Then there exists a
collar $\lambda_X(\text{Bd } C_X \times [0,1])$ on $\text{Bd }C_X$  in $C_X$ such that
$$\lambda_X(h_X(s) \times [0,1]) \subset \lambda(s \times [\mu(s),1]),$$
whenever $s\in \text{Bd }C$. The conclusion follows.
\end{proof}

\begin{theorem} 
Let $C$ and $D$ be crumpled $n$-cubes with wild sets $W_C, W_D$, 
respectively, and let $h:\mathrm{Bd}$ $C \to \mathrm{Bd}$ $D$ be a homeomorphism such
that $W_D \subset h(W_C)$.  Set $X = h^{-1}(W_D)$.  Then 
$(C,D,h) \in \mathscr{W}_n$ if and only if
$(C_X,D,hh^{-1}_X) \in \mathscr{W}_n$.
\end{theorem}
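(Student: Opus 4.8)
The plan is to prove the two implications separately; only the forward one requires any construction.

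For the reverse implication, suppose $(C_X,D,hh_X^{-1})\in\mathscr{W}_n$. The standard flattening already supplies $(C,C_X,h_X)\in\mathscr{W}_n$, so I would simply invoke the transitivity feature (2) of Section 3: composing $(C,C_X,h_X)$ with $(C_X,D,hh_X^{-1})$ yields $\big(C,D,(hh_X^{-1})h_X\big)=(C,D,h)\in\mathscr{W}_n$. Nothing further is needed.

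For the forward implication I cannot run transitivity backwards: since $C_X$ is $C$ with its wildness flattened away off $X$, one does not expect $(C_X,C,h_X^{-1})\in\mathscr{W}_n$ (Corollary 3.3 would otherwise force $W_C=X$), so an associated map must be built by hand. Starting from an associated map $f_h:C\to D$ and the strong deformation retraction $r:C_X\to C$ provided by the flattening, I would set $g_0=f_h\circ r:C_X\to D$. Because $rh_X=\mathbbm{1}|\text{Bd }C$, one has $r|\text{Bd }C_X=h_X^{-1}$, hence $g_0|\text{Bd }C_X=f_h h_X^{-1}=hh_X^{-1}$, so $g_0$ extends the required homeomorphism. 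Its only defect is that $g_0^{-1}(\text{Bd }D)=r^{-1}(\text{Bd }C)$ overshoots $\text{Bd }C_X$: it also contains $\text{Bd }C-X$ and the interiors of the collar arcs $\lambda(s\times[0,\mu(s)])$ over $s\in\text{Bd }C-X$. Since $X=h^{-1}(W_D)$, this excess is carried by $g_0$ into $f_h(\text{Bd }C-X)=h(\text{Bd }C-X)=\text{Bd }D-W_D$, that is, entirely into the locally flat part of $\text{Bd }D$.

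The remaining task is to push this excess off $\text{Bd }D$ while fixing $g_0$ on $\text{Bd }C_X$. I would choose a collar $\kappa$ of the flat part $\text{Bd }D-W_D$ in $D$ (available since that part is locally collared) and replace $g_0$ by a map $g$ that slides the image inward along $\kappa$ by a depth $\theta(z)$, where $\theta:C_X\to[0,1]$ is continuous with $\theta^{-1}(0)\supset\text{Bd }C_X$ and $\theta\equiv 1$ on $\text{Bd }C-X$; thus the push is trivial at the outer collar level $t=\mu(s)$ and maximal at $t=0$, matching the push imposed on $\text{Int }C$ as $c\to\text{Bd }C-X$, so the seam between $C$ and the collar is continuous automatically. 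This is exactly the adjustment used to upgrade a map $C\to B^n$ in the proof of Theorem 3.1 and to repair the flattening map at the end of the standard-flattening construction. A direct check then gives $g|\text{Bd }C_X=hh_X^{-1}$ and $g^{-1}(\text{Bd }D)=\text{Bd }C_X$, so $g$ witnesses $(C_X,D,hh_X^{-1})\in\mathscr{W}_n$.

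The main obstacle is continuity of $g$ at $X$: as $s\to x\in X$ the arcs $\kappa\big(h(s)\times[0,1]\big)$ sit over points $h(s)$ of the flat part that limit onto the wild set $W_D$, where a collar need not behave well. I would arrange the collar so that its fibers shrink toward $W_D$ (for instance $\text{diam }\kappa\big(y\times[0,1]\big)<\text{dist}(y,W_D)$) and combine this with the diameter controls already imposed in the flattening, namely $\text{diam }\lambda\big(s'\times[0,\mu(s')]\big)<\text{dist}(s',X)$; together these force $g(z)\to h(x)=g(x)$, securing continuity at $X$ and completing the construction.
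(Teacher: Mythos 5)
Your skeleton is exactly the paper's: the reverse implication is obtained from $(C,C_X,h_X)\in\mathscr{W}_n$ together with the transitivity property (2) of Section 3, and the forward implication takes the composite $f_h r:C_X\to D$, notes that $rh_X=\mathbbm{1}|\mathrm{Bd}\,C$ makes its boundary restriction equal to $hh_X^{-1}$, and notes that the only defect --- interior points of $C_X$ (namely $\mathrm{Bd}\,C-X$ and the open collar arcs) landing on $\mathrm{Bd}\,D$ --- has image in $\mathrm{Bd}\,D-W_D$ because $X=h^{-1}(W_D)$, so those images can be pushed off $\mathrm{Bd}\,D$. The paper stops at ``modifications of a now familiar sort''; you attempt to spell the modification out, and that added detail contains one concrete error. (A side point: your parenthetical about why transitivity cannot be run backwards should cite Corollary 3.9, not Corollary 3.3.)

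The depth function $\theta$ you prescribe cannot exist: you require $\theta:C_X\to[0,1]$ continuous, $\theta=0$ on $\mathrm{Bd}\,C_X\supset X$, and $\theta\equiv 1$ on $\mathrm{Bd}\,C-X$. Since $\mathrm{Bd}\,C$ is a connected $(n-1)$-sphere, whenever $\emptyset\neq X\neq \mathrm{Bd}\,C$ the set $X$ contains limit points of $\mathrm{Bd}\,C-X$, and at any such point continuity would force $1=0$. The repair is easy because $\theta\equiv 1$ on $\mathrm{Bd}\,C-X$ is more than you need: the argument only requires $\theta>0$ at every troublesome point, so take, say, $\theta(z)=\min\bigl\{1,\mathrm{dist}(z,\mathrm{Bd}\,C_X)\bigr\}$, which vanishes exactly on $\mathrm{Bd}\,C_X$; the ``seam matching'' you were engineering then takes care of itself, since $g(z)=P_{\theta(z)}\bigl(g_0(z)\bigr)$ is defined globally by one formula. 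Relatedly, your collar control $\mathrm{diam}\,\kappa\bigl(y\times[0,1]\bigr)<\mathrm{dist}(y,W_D)$ is slightly too weak: a fiber of diameter just under $\mathrm{dist}(y,W_D)$ can still carry points arbitrarily close to $W_D$ while displacing them by a definite amount, breaking continuity of the push at $W_D$. Strengthening it to $\mathrm{diam}\,\kappa\bigl(y\times[0,1]\bigr)<\tfrac{1}{2}\,\mathrm{dist}(y,W_D)$ gives, for every point $p$ in the fiber over $y$, $\mathrm{dist}(p,W_D)>\mathrm{diam}\,\kappa\bigl(y\times[0,1]\bigr)\geq$ (displacement of $p$), which secures continuity of the push at $W_D$ and hence of $g$ at $X$. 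With these two repairs, your construction is a complete and correct rendering of the step the paper leaves implicit.
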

\begin{proof}

The reverse implication follows from Definition 5.1,
Lemma 5.1 and the transitivity property for appropriate 
pairs of triples belonging to $\mathscr{W}_n$.

Turning to the forward implication, we consider
$(C,D,h) \in \mathscr{W}_n$ and name a map
$f_h:C \to D$ associated with $h$.  There is a retraction
$r:C_X \to C \subset C_X$  that restricts to $h^{-1}$ on 
Bd $C_X$ and otherwise satisfies  
$r(C_X - C) \subset \mathrm{Bd }C - X$.
Then $f_hr:C_X \to D$
restricts to $hh^{-1}_X $ on Bd $C_X$.
The only problem with $f_hr:C_X \to D$ is that it sends some points of
Int $C_X - \mathrm{Int }C$ to Bd $D$.  However, the images of those troublesome points miss $W_D$, so modifications of a now familiar sort
can be used to push those images off Bd $D$, as required.
\end{proof}

\begin{example} 
A pair of crumpled $n$-cubes with non-homeomorphic wild sets such that each is at least as wild as the other.

In $\mathbb{R}^{n-1}$, identify a countable collection of round $(n-2)$-spheres, no two of which intersect, plus a point to which these spheres 
converge. Let $Z$ be the union, and let $Z^*$ be $Z$  with one of the spheres removed.  One still can show that there 
exists a homeomorphism  $h$ of  $\mathbb{R}^{n-1}$ to itself such that  $h(Z) = Z^*$.

Next, label the $(n-1)$-balls bounded by the spheres in  $Z$ as 
$$B_1,B_2,B_3,\dots,B_i,\dots,$$
with the understanding that $B_1$  misses $Z^*$, and that homeomorphism $h$ of $\mathbb{R}^{n-1}$ to itself
sends $B_i$ to $B_{i+1}$.  Extend $h$ to a homeomorphism  $H$ of  $\mathbb{R}^{n-1} \times [0,\infty)$  to itself  
that takes each  $p\times [0,\infty)$ to  $h(p)\times [0,\infty)$ and carries  $B_i \times [0,\frac{1}{i}]$ onto  
$B_{i+1} \times [0,\frac{1}{i+1}]$. 

Find a crumpled cube $C$ whose boundary is locally flat modulo a
simple closed curve $J$ standardly embedded in Bd $C$.  Replace each $n$-cell $B_i 
\times [0,\frac{1}{i}]$ with a copy of  $C \subset B_i \times [0,\frac{1}{i}]$, i.e. embedding a copy of $C$  in $B_i 
\times [0,\frac{1}{i}]$ so that the image of $C$ contains all of this $n$-cell's boundary not in
$\mathbb{R}^{n-1} \times \{0\}$. For later reference, denote $\big(\text{Bd }B_i \times [0,\frac{1}{i}]\big) \cup (B_i 
\times \{\frac{1}{i}\})$ as $\beta_i$.  Make sure the image of $J$ misses $\beta_i$.
Once we do this replacement for the first  $B_1 \times [0,1]$, do it in a way that is compatible with  $H$ in the 
remainder. That is, do it so the homeomorphism  $H$  restricted to  $\beta_i$ extends to a homeomorphism from the copy 
of $C$ in  $B_i \times [0,\frac{1}{i}]$ to the copy of $C$ in  $B_{i+1} \times [0,\frac{1}{i+1}]$.

Let $K$ be the subset of  $\mathbb{R}^{n-1} \times [0,\infty)$ obtained by deleting the cells $B_i\times 
[0,\frac{1}{i}]$ from $Z$ and replacing with the copies of $C$, and let $K^*$  be the space obtained when we leave 
$B_1\times I$ as it is and replace all the others.  $H$  extends to give a homeomorphism of $K$ onto $K^*$,
The one point compactifications of $K$  and  $K^*$  are crumpled cubes, and  $H$ extends to give a homeomorphism $H^*$ 
of those crumpled cubes. Let's use $\overline{K}$ and $\overline{K^*}$  as the names for
these crumpled cubes, the compactifications.

Let  $W$  denote all of the wild set of  $\overline{K}$  except the interior of an open subarc $A$ of the copy of $J$ 
in the first replacement $C$. So $W$ consists of a point, an arc, and a sequence of simple closed curves whose 
diameters go to 0. The standard flattening gives  $(\overline{K},\overline{K}_W,h_W) \in \mathscr{W}_n$. Let $W'$ denote 
all of the wild set of $\overline{K}$  that misses
$B_1 \times I$.  Looking at $J$ in that first replacement in another way, $W'$ is the portion of the wild set of 
$\overline{K}_W$  
except for the component that is an arc.  We have a standard flattening of $\overline{K}_W$ relative to $W'$,
which means $\big(\overline{K}_W,(\overline{K}_W)_{W'},h_{W'}\big) \in \mathscr{W}_n$. Here $\overline{K^*}$  can be regarded 
as a standard flattening of $\overline{K}$
relative to all of the wild set outside that copy of $J$ in the first replacement crumpled cube.
Standard flattenings with respect to the same subset are homeomorphic, so $(\overline{K}_W)_{W'} $, which  is flattening first 
relative to $W$ and then relative to $W' \subset  W$,   is the same as flattening relative to $W'$.   As a result, 
$(\overline{K}_W)_{W'}$ is homeomorphic to $\overline{K^*}$ and we know  $\overline{K^*}$ is homeomorphic to 
$\overline{K}$.  This certifies that $\overline{K}$ is at least as wild as $\overline{K}_W$
and $\overline{K}_W$ is at least as wild as $(\overline{K}_W)_{W'} \cong \overline{K}$.
\end{example}

\begin{definition} 
$C$ is \textit{strictly wilder than} $D$ if and only there exists a homeomorphism $h$ such that $(C,D,h)\in \mathscr{W}_n$ but there is no homeomorphism $H$ such that $(D,C,H)\in \mathscr{W}_n$.
\end{definition}
Definition 5.2 imposes a strict partial order on the collection of all
crumpled $n$-cubes (up to homeomorphism).

\begin{theorem} 
Suppose $D$ is a closed $n$-cell-complement whose wild set $W$ is a
proper subset of $\mathrm{Bd}$ $D$.  Then there exists a closed $n$-cell-complement $C$ strictly
wilder than $D$ such that $\mathrm{Bd}$ $C$ is wild at each of its points.
\end{theorem}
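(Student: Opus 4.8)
The plan is to build $C$ by denting $D$ inward along a dense null sequence of wild caps, collapse each dent back to its flat base to get the associated map, and then read off strictness for free from the wild-set containment of Corollary 3.9. The point to exploit is that once $C$ is both \emph{everywhere} wild and at least as wild as $D$, strictness is automatic: if some homeomorphism $H$ gave $(D,C,H)\in\mathscr{W}_n$, then Corollary 3.9 (applied with the roles of the two cubes reversed) would force $W_C\subset H(W_D)=H(W)$; but $W_C=\text{Bd }C$ while $H(W)$ is a \emph{proper} subset of $\text{Bd }C$ (as $W\subsetneq\text{Bd }D$ and $H$ is a homeomorphism), which is impossible. Since every crumpled $n$-cube can be re-embedded as a closed $n$-cell-complement, it therefore suffices to produce a crumpled $n$-cube $C$, wild at each boundary point, together with a homeomorphism $h$ for which $(C,D,h)\in\mathscr{W}_n$.

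For the construction, set $U=\text{Bd }D-W$, a nonempty open set along which $\text{Bd }D$ is locally collared. Fix a model $(n-1)$-cell $S_0$ with tame round boundary that is wild at each of its interior points (for $n\ge 3$ one obtains such a cell by attaching a dense null family of Fox--Artin/Alexander bumps to a flat disk, keeping the bounding $(n-2)$-sphere round). Choose pairwise disjoint round $(n-1)$-cells $\{B_i\}\subset U$ with $\text{diam }B_i\to0$ that are dense in $U$, and over each $B_i$ a flat collar $P_i=B_i\times[0,\varepsilon_i]\subset D$. Replace the flat face $B_i$ by a scaled copy $S_i\subset P_i$ of $S_0$ with $\partial S_i=\partial B_i$, let $R_i$ be the open region of $P_i$ caught between $B_i$ and $S_i$, and put
$$C=D-\bigcup_i R_i,\qquad \text{Bd }C=\Big(\text{Bd }D-\bigcup_i B_i\Big)\cup\bigcup_i S_i.$$
Then $\text{Bd }C$ is an embedded $(n-1)$-sphere and $C$ is the closure of one of its complementary domains, so $C$ is a crumpled $n$-cube. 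Since the wild set is closed, and $\bigcup_i S_i$ (every point of which is wild) is dense in $\overline U$ while $W$ itself is wild, the wild set of $C$ contains $\overline U\cup W=\text{Bd }D$; thus $\text{Bd }C$ is wild at each of its points.

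It remains to exhibit $h$ and an associated map. Let $h:\text{Bd }C\to\text{Bd }D$ be the identity off $\bigcup_i S_i$ and, on each $S_i$, a homeomorphism $S_i\to B_i$ fixing $\partial B_i$; this is a homeomorphism. Extend $h$ to $f:C\to D$ by the identity outside the $P_i$. Inside $P_i$, the part $Q_i'$ of $C$ lying beyond $S_i$ is a crumpled $n$-cube whose frontier consists of the wild cap $S_i$ together with the (flat) top and upper walls of $P_i$; applying the scaling map from the proof of Theorem~3.1 to $Q_i'$ and the flat cell $P_i$ produces a map that carries $S_i$ onto $B_i$ via $h$, fixes the top and walls, and sends every interior point inward. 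Pasting these along the shared faces (where both pieces equal the identity) yields $f:C\to D$ with $f|\text{Bd }C=h$ and $f^{-1}(\text{Bd }D)=\text{Bd }C$, so $(C,D,h)\in\mathscr{W}_n$.

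The main obstacle is the analytic bookkeeping along the accumulation set. The dents pile up near $W$ and $\partial U$, so continuity of $f$ there, and the requirement that no limit of interior points be sent into $\text{Bd }D$, are not free. These are secured exactly as in the standard-flattening construction: imposing $\text{diam }R_i\to0$ and requiring the $i$-th local map to move points less than $\text{dist}\big(B_i,\,W\cup\partial U\big)$ forces the tracks of $f$ to shrink as one approaches the accumulation set, which delivers both continuity and the preimage condition $f^{-1}(\text{Bd }D)=\text{Bd }C$ at limit points. With these estimates in hand, the construction is complete, and the strictness argument of the first paragraph shows $C$ is strictly wilder than $D$.
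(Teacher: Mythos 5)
Your proposal is correct and follows essentially the same route as the paper's own proof: both arguments dent the locally flat part of $\text{Bd }D$ along a null family of wild $(n-1)$-cells pushed into $\text{Int }D$ (so that $C$ is $D$ with the intervening collar regions deleted), both obtain the associated map by collapsing those dents (the paper packages this as the standard flattening $D = C_W$, while you paste Theorem 3.1-type scaling maps piece by piece, which amounts to the same thing), and both derive strictness exactly as you do, from everywhere-wildness of $\text{Bd }C$ together with the wild-set inclusion of Corollary 3.9. The only differences are cosmetic: the paper triangulates $\text{Bd }D - W$ rather than using a dense disjoint family of round cells, and it specifies the model wild cell via Bing's construction plus repeated suspension, where your Fox--Artin/Alexander-bump sketch is vaguer in dimensions $n>3$ but refers to the same standard objects.
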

\begin{proof}
Triangulate $\text{Bd }D - W$ and list the $(n-1)$-simplexes
$$\Delta_1, \Delta_2,\dots,\Delta_k,\dots$$
of this triangulation.  (If $W$ is topologically a polyhedron tamely
embedded in $\text{Bd }D$, this can be a finite list and the $\Delta_i$ can be allowed to touch $W$ in
their boundaries; otherwise, however, require the diameters of the
$\Delta_k \to 0$  as $k \to \infty$.)

Since $\text{Bd }D$ is locally flat at all points of $\text{Int }\Delta_i$, $\Delta_i$
can be thickened to an $n$-cell $B_i$ in $D$ such that $\Delta_i$ is a standardly 
embedded subset of $\text{Bd }B_i$ and $B_i \cap B_j \subset \Delta_i \cap \Delta_j$ for all $i \neq j$.

Let $K$ be a crumpled $n$-cube whose boundary is locally flat modulo an
$(n-1)$-cell $A$ standardly embedded in $\text{Bd }K$. Also require that $\text{Bd }A$ be tame in space
(i.e., some homotopy taming set for $K$ misses $\text{Bd }A$).  (To get such a $K$,
modify the Bing's construction of \cite{Bing 1} to
generate a crumpled 3-cube wild at the points of a 2-cell with tame
boundary, and suspend as often as needed.) Then addition to $K$ of a tapered
(exterior) collar on $\text{Int }A$ produces an $n$-cell $B^n$ containing $A$ with $\text{Cl }(B^n -K)
- \text{Bd }A$ equal to that tapered collar. Equate each $B_i$ with a copy of $B^n$ so
as to identify a copy  $K_i$ of $K$ in each $B_i$; do this so
$K_i \cap \text{Cl }(D - B_i)$ corresponds to $\text{Cl }(\text{Bd }K - A)$.  It follows that $(D -
\cup_i B_i) \cup K_i$ is a crumpled $n$-cube $C$; in other words, $C$ results
from $D$ by deleting all the tapered collars and taking the closure of what
remains.  It should be immediately obvious that $\text{Bd }C$ is everywhere wild
and that $\text{Bd }C$ contains $W$.

Build an exterior collar on $C$ by first appending the tapered collars in
$B_i$ on the various $K_i$.  The union equals $D$.  When it is combined with an
exterior collar on $\text{Bd }D$, we have a collar on $C$.  A standard flattening $C_W$
of $C$ then equals $D$; thus, we have ($C,D =C_W,h_W) \in \mathscr{W}_n$.

Since $\text{Bd }C$ is everywhere wild and $\text{Bd }D$ is not, $C$ is strictly wilder than $D$.
\end{proof}

\begin{theorem} 
For any crumpled $n$-cube $D$, $n>3$, there exists another crumpled $n$-cube $C$ and homeomorphism $h$ 
with $(C,D,h) \in \mathscr{W}_n$ such that every associated map $f_h:C \to D$ extending $h$ restricts to an epimorphism 
$(f_h)_\#:\pi_1(\mathrm{Int}$ $C) \to \pi_1(\mathrm{Int}$ $D)$ having non-trivial kernel.
\end{theorem}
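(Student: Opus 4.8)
The plan is to imitate, in a form valid for an arbitrary $D$, the mechanism behind Example 3.1: manufacture $C$ so that $\text{Int }C$ carries a loop that is essential there but is \emph{forced} to die in $\text{Int }D$ under every associated map. The hypothesis $n>3$ enters because it guarantees that \emph{every} point of $\text{Bd }D$ is a piercing point, so a purely local modification can be performed even when $D$ is wild everywhere and offers no flat spot to exploit (in contrast with the construction in Theorem 5.2, which triangulated a flat part of $\text{Bd }D$).

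First I would fix a piercing point $p\in\text{Bd }D$ and realize $D$ in $S^n$ together with a tame arc $\alpha$ piercing $\text{Bd }D$ at $p$, whose interior half lies in $\text{Int }D$. Using the tameness of $\alpha$ to get local control, I would graft near $p$ a small \emph{linked wild finger}, namely a copy of a crumpled $n$-cube block $K$ with non-simply-connected interior that is locally flat modulo a tame subset of codimension $\ge 3$ (such $K$ exist for $n\ge 4$ by spinning or suspending a Bing-type crumpled $3$-cube, as indicated in Remark 3.2). I would arrange the grafting so that the resulting space $C$ is again a crumpled $n$-cube with $\text{Bd }C\cong\text{Bd }D$; so that the only wildness of $\text{Bd }C$ not already present in the grafted copy of $W_D$ is the finger's wild tip $X'$ (chosen disjoint from the copy of $W_D$); and so that a meridional loop $\gamma$ of the finger sits in $\text{Int }C$ arbitrarily close to $\text{Bd }C$. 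The essentiality of $\gamma$ in $\text{Int }C$ is a local consequence of the finger's wildness, exactly as a loop linking the two horns of the Alexander sphere is essential.

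Next I would establish $(C,D,h)\in\mathscr{W}_n$ through the standard flattening of Section 5. Taking $X$ to be the complement in $\text{Bd }C$ of a small neighborhood of $X'$ (so that the finger is flattened while the copy of $W_D\subset X$ is retained), the standard flattening $C_X$ is, by the explicit construction together with the uniqueness of standard flattenings with respect to a fixed subset, homeomorphic to $D$; thus $(C,C_X,h_X)=(C,D,h)\in\mathscr{W}_n$ with $h=h_X$. After the routine adjustment pushing $f_h(\text{Int }C)$ off $\text{Bd }D$, Corollary 3.1 shows that every associated $f_h$ induces an \emph{epimorphism} $(f_h)_\#:\pi_1(\text{Int }C)\to\pi_1(\text{Int }D)$, so the remaining point is to exhibit a nontrivial element of its kernel.

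For the kernel, observe that $h=h_X$ carries the finger's boundary region onto the \emph{flattened} (locally collared) part of $\text{Bd }D$, where $\text{Int }D$ is locally simply connected, so every loop lying close enough to that part contracts in $\text{Int }D$. Since $\gamma$ may be taken as near $\text{Bd }C$ as desired and $f_h|\text{Bd }C=h$, continuity forces $f_h(\gamma)$ to lie near $h(\gamma)$ and hence to be null-homotopic in $\text{Int }D$, while $\gamma$ is essential in $\text{Int }C$; therefore $(f_h)_\#$ has nontrivial kernel. This last conclusion depends only on the boundary behavior of $f_h$ and the local argument of Lemma 3.1, so it holds for \emph{every} associated map, not merely one. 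The hard part, I expect, is the construction itself: producing the block $K$ with non-simply-connected interior and codimension-$\ge 3$ tame wild set, grafting it at a piercing point of a possibly everywhere-wild $D$ so that $C$ is genuinely a crumpled $n$-cube whose flattening returns $D$, and verifying both that $\gamma$ remains essential in $\text{Int }C$ and that the near-boundary estimate really kills $f_h(\gamma)$ uniformly in $f_h$.
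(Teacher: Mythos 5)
Your approach is genuinely different from the paper's, but it has a gap that is fatal as stated: it cannot handle an everywhere-wild $D$, and Theorem 5.3 quantifies over \emph{all} crumpled $n$-cubes $D$, including those whose boundary is wild at every point. Your construction needs the finger's wild tip $X'$ to be disjoint from the copy of $W_D$ inside $\text{Bd }C\cong \text{Bd }D$, and your final kernel step needs $h$ to carry the finger region onto a part of $\text{Bd }D$ at which $\text{Int }D$ is locally simply connected (a locally collared spot). When $W_D=\text{Bd }D$ both requirements fail outright: there is no room for $X'$, and there are no collared points of $\text{Bd }D$ near which $f_h(\gamma)$ could be forced to die. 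Relatedly, your identification $C_X\cong D$ is unsupported: the uniqueness you invoke (used in Example 5.1) compares two standard flattenings of the \emph{same} crumpled cube relative to the \emph{same} set; it does not say that flattening the grafted cube $C$ returns the original $D$. That identification is exactly what Theorem 5.2 arranges by hand, and its construction explicitly requires $W_D$ to be a proper subset of $\text{Bd }D$ (it triangulates $\text{Bd }D-W_D$ and thickens the simplexes); Theorem 5.3 is precisely the statement that must escape that hypothesis.

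The paper escapes it by inserting wildness through a Cantor set rather than grafting at a boundary spot: choose a Cantor set $X\subset \text{Bd }D$ missing some homotopy taming set for $D$ (such tame-in-space Cantor sets exist even when $\text{Bd }D$ is everywhere wild), span it by an $(n-1)$-sphere $S\subset X\cup \text{Int }D$ that is locally flat modulo $X$; by Kirby's theorem $S$ is standard, so it bounds an $n$-cell $B\subset D$, which is then replaced by a crumpled cube $K$ locally flat modulo a Cantor set $Z$ that is wild in space, glued so $Z$ corresponds to $X$. Because $S-X$ is simply connected, van Kampen gives $\pi_1(\text{Int }C)\cong \pi_1(\text{Int }C-K)\ast\pi_1(\text{Int }K)$, and this free-product structure does the two jobs your sketch leaves open. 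First, it certifies that a loop $\gamma\subset \text{Int }K$ is essential in all of $\text{Int }C$ --- a point you assert by analogy with the horned sphere but never establish; essentiality near the wild tip does not by itself give essentiality in $\text{Int }C$. Second, it drives the kernel argument: using Lemma 3.1 and a disk-with-holes, $(f_h)_\#[\gamma]$ lies in the normal closure of elements coming from $\pi_1(\text{Int }C-K)$, so if $(f_h)_\#$ were injective, $[\gamma]$ would lie in the normal closure of the wrong free factor, which is impossible. That argument is purely algebraic once Lemma 3.1 is in hand, so it applies uniformly to every associated map, with no need for the metric near-boundary estimates whose uniformity in $f_h$ you yourself flag as the doubtful point.
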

\begin{proof}
Given any crumpled cube $D$ we can find a Cantor set $X$ which misses some homotopy taming set for $D$ (i.e. $X$ is tame
in space when  $D$  is embedded in $S^n$ as a closed $n$-cell-complement). We claim that there exists $(C,D,h) \in \mathscr{W}_n$ such that, for every homotopy taming set $T$ for $C$,
$T \cap h^{-1}(X)$ is nonempty.  In other words, $h^{-1}(X)$ fails to be tame in space. The key is to produce an 
$(n-1)$-sphere $S$ in  $X \cup \text{Int }D$ that is locally flat modulo $X$ and which 
contains $X$ as a standard Cantor set in $S$.  By \cite{Kirby 1} $S$ is standardly embedded in $S^n$, so it bounds two $n$-cells, $B$ and $B'$, with $B \subset D$.  
Remove $B$ from $D$ and replace it with a crumpled cube $K$ locally flat modulo a Cantor set $Z$ wild in space but tame in $\text{Bd }K$; specifically, attach  $K$ to $D - \text{Int }B$ via a homeomorphism $\theta:\text{Bd }B \to \text{Bd }K$ such that $\theta(X) = Z$.  Let $C$ be the result of the replacement. Keep in mind that  $K$  can be put into $S^n$ 
as a closed $n$-cell-complement, so $S^n = B' \cup K \supset C$; in short,  
$C$  is a crumpled cube.

Note that $S - X$ is simply connected.  Hence, 
$$\pi_1(\text{Int }C) \cong \pi_1(\text{Int }C- K)\ast \pi_1(\text{Int }K).$$ 
Consider any loop $\gamma:\partial I^2 \to \text{Int }C$ with
image in $\text{Int }K$ and any map  $f_h:C \to D$ associated with the obvious
homeomorphism between $\text{Bd }C$ and $\text{Bd }D$.  Then $f_h \gamma$ extends
to a map $\Gamma:I^2 \to D$; since $T$ is a homotopy taming set for $D$, $\Gamma$ can
be approximated by a map $\Gamma'$ agreeing with $\Gamma$ on $\text{Bd }I^2$, with $\Gamma'(I^2)
\subset T \cup \text{Int }D$.  Set $U = D - f_h(K)$ and let $V$ denote the component
of $U - \text{Bd }D$ whose closure contains $\text{Bd }D - X$.  Find a disk with holes $P$ in
$I^2$ with $\partial I^2 \subset P$, $\Gamma(P) \subset \text{Int }D$, and
$\partial P - \partial I^2 \subset V$. This means that the subgroup of
$\pi_1(\text{Int } D)$ carried by $f_h \gamma(\partial I^2)$ is in the normal closure of
$\pi_1(V)$, and elements $\alpha_1, \dots, \alpha_m$ determined by the components of
$\partial P - \partial I^2$. Let $V'$ denote the component of
$f_h^{-1}(V)$ whose closure contains  $\text{Bd }C - Z$.  By Lemma 3.1, the
restriction of $f_h$ induces an epimorphism $\pi_1(V') \to \pi_1(V)$, so 
$\pi_1(V')$ contains elements $\alpha'_i$ sent by $f_h$ to $\alpha_i$ $(i=1,\dots,m)$. Observe that $V' \subset 
\text{Int }C - K$.  If $f_h$ also restricted to give an
isomorphism $\pi_1(\text{Int }C) \to \pi_1(\text{Int }D)$, the subgroup of $\pi_1(\text{Int }C)$
carried by $\gamma(\partial I^2)$ would be in the normal closure of the
$\alpha_i$, and hence on the normal closure of $\pi_1(\text{Int }C - K)$ with respect
to $\pi_1(\text{Int }C)$, an impossibility.
\end{proof}

Given one crumpled  cube $D$, Theorem 5.2 presents a method for constructing another crumpled cube $C$  at least as wild as $D$; in most circumstances 
$C$ will be strictly wilder than $D$ by virtue of having a larger wild set, topologically distinct from that of  $D$.  Theorem 5.3 accomplishes a similar purpose without changing the topological type of the wild set but instead increasing the wildness of the boundary sphere. 
Decomposition theory affords a very general technique for creating additional examples.

\begin{example}
A decomposition theory technique for producing a crumpled cube at least as wild a given crumpled cube $D$  and having topologically equivalent wild set.
Given $D$, locate an $n$-cell $B$ in $D$ with $B \cap \text{Bd }D = X$, and consider any cell-like decomposition  $G$ of $S^n$ whose nondegenerate elements are subsets of $B$,
each of which meets $\text{Bd }B$ in a single point of $X$.  Let $N_G$ denote the union of those nondegenerate elements. To be truly effective,
assume there exists at least one loop $\gamma$ in $\text{Int }B - N_G$
which is homotopically essential in $S^n - N_G$.  Let $\pi_G: S^n \to S^n/G$
denote the decomposition map.  Since $\pi_G(N_G) \subset \pi_G(\text{Bd }B)$ and $\pi_G|\text{Bd }B$ is 1-1, $S^n/G$ is finite dimensional.  Often $G$ will be shrinkable and $S^n/G$ will be homeomorphic to $S^n$;
this can be assured by imposing additional restrictions on $G$. If 
shrinkability fails, one can suspend all the relevant data and obtain examples in higher dimensions.  We continue by describing how to proceed when $G$ is shrinkable.  Set $C = \pi_G(D)$. It is a crumpled cube since $\pi_G|\text{Bd }C$ is 1-1; moreover, the function $(\pi_G)^{-1}$ restricts to a homeomorphism of $\text{Bd }C$ onto $\text{Bd }D$.  To see that $(C,D,h) \in \mathscr{W}_n$, note that $(\pi_G)^{-1}|\pi_G(C - \text{Int }B)$
extends to a map $f_h:C \to D$ such that $f_h\big(\pi_G(\text{Int }B)\big) \subset \text{Int }B$, just as in the proof of Theorem 3.1. The special loop $\gamma$ has image under $\pi_G$ that cannot be shrunk in $\text{Int }C$, but $f_h \pi_G(\gamma) \subset \text{Int }B$ must be contractible in $\text{Int }D$.  
\end{example}

Theorem 5.3 suggests there is no maximal element in the 
"strictly wilder than" partial ordering.  We have not established that fact.
However, we do have the following corollaries.

\begin{corollary} 
Let  $C$ denote a crumpled $n$-cube such that $\pi_1(\mathrm{Int}$ $C)$ is
finitely generated.  Then $C$ is not a maximal element in the "strictly
wilder than" partial ordering.
\end{corollary}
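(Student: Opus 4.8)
The plan is to exhibit a crumpled $n$-cube that is strictly wilder than $C$; by Definition 5.2 this is precisely what it means for $C$ to fail to be maximal. The source of such a cube is Theorem 5.3, so I take $n>3$ in order to invoke it. Applying Theorem 5.3 with its given cube taken to be our $C$, I obtain a crumpled $n$-cube $E$ and a homeomorphism $h$ with $(E,C,h)\in\mathscr{W}_n$, so that $E$ is at least as wild as $C$. The whole argument then reduces to showing that the reverse relation fails: that no homeomorphism $H$ can satisfy $(C,E,H)\in\mathscr{W}_n$.

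The leverage is the free product decomposition recorded in the proof of Theorem 5.3. Writing $K$ for the crumpled cube glued into $C$ to build $E$, that proof produces
$$\pi_1(\mathrm{Int}\,E)\cong\pi_1(\mathrm{Int}\,E-K)\ast\pi_1(\mathrm{Int}\,K),$$
and it exhibits a loop $\gamma\subset\mathrm{Int}\,K$ that is essential in $\mathrm{Int}\,E$, whence the free factor $N:=\pi_1(\mathrm{Int}\,K)$ is nontrivial. Since $E-K$ is nothing but $C$ with an open tame $n$-cell deleted and $n\geq 3$, a van Kampen argument identifies $\pi_1(\mathrm{Int}\,E-K)\cong\pi_1(\mathrm{Int}\,C)$. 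Hence $\pi_1(\mathrm{Int}\,E)\cong\pi_1(\mathrm{Int}\,C)\ast N$ with $N\neq 1$.

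Now I argue by contradiction. Suppose $(C,E,H)\in\mathscr{W}_n$ with an associated map $g_H\colon C\to E$. Membership in $\mathscr{W}_n$ forces $g_H^{-1}(\mathrm{Bd}\,E)=\mathrm{Bd}\,C$, so $g_H(\mathrm{Int}\,C)\subset\mathrm{Int}\,E$; applying Lemma 3.1 to the triple $(C,E,H)$ with $W=E$ and $Y=C$ then produces an epimorphism $\pi_1(\mathrm{Int}\,C)\twoheadrightarrow\pi_1(\mathrm{Int}\,E)$. But $\pi_1(\mathrm{Int}\,C)$ is finitely generated, while Grushko's theorem gives
$$\mathrm{rank}\,\pi_1(\mathrm{Int}\,E)=\mathrm{rank}\,\pi_1(\mathrm{Int}\,C)+\mathrm{rank}\,N>\mathrm{rank}\,\pi_1(\mathrm{Int}\,C),$$
since $N\neq 1$. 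An epimorphism cannot raise the minimal number of generators, so no such $g_H$ can exist. Therefore $C$ is not at least as wild as $E$, so $E$ is strictly wilder than $C$ and $C$ is not maximal.

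I expect the main obstacle to be pinning down the two structural inputs cleanly enough that Grushko's theorem genuinely applies: checking that the second free factor $N=\pi_1(\mathrm{Int}\,K)$ is truly nontrivial (which is exactly what the essential loop $\gamma$ from Theorem 5.3 supplies) and that the first factor is correctly identified with $\pi_1(\mathrm{Int}\,C)$, so that finite generation of $\pi_1(\mathrm{Int}\,C)$ becomes a real obstruction to surjectivity. Should $N$ happen not to be finitely generated, the conclusion is only easier, for then $\pi_1(\mathrm{Int}\,E)$ is itself not finitely generated and hence cannot be a quotient of the finitely generated group $\pi_1(\mathrm{Int}\,C)$.
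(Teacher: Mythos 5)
Your proposal is correct and takes essentially the same route as the paper's own proof: invoke Theorem 5.3 to obtain a crumpled cube $\tilde{C}$ with $(\tilde{C},C,h)\in\mathscr{W}_n$ whose interior has fundamental group a nontrivial free product $G\ast\pi_1(\mathrm{Int}\,C)$, then use Grushko's Theorem to rule out the epimorphism $\pi_1(\mathrm{Int}\,C)\to\pi_1(\mathrm{Int}\,\tilde{C})$ that Lemma 3.1 (via Corollary 3.1) would force if some $(C,\tilde{C},H)$ lay in $\mathscr{W}_n$. You simply make explicit several points the paper leaves implicit --- the van Kampen identification of the first free factor with $\pi_1(\mathrm{Int}\,C)$, the role of the associated map in producing the forbidden epimorphism, and the (even easier) case where the new free factor fails to be finitely generated.
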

\begin{proof}
By Theorem 5.3, there exist a crumpled $n$-cube $\tilde{C}$ and a
homeomorphism $h$ such that $(\tilde{C},C,h) \in \mathscr{W}_n$ and $\pi_1(\text{Int }
\tilde{C})$ is a non-trivial free product $G*\pi_1(\text{Int }C)$.   Hence, by Grushko's Theorem the number of generators of $\pi_1(\text{Int }\tilde{C})$ must be greater
than the rank of $\pi_1(\text{Int }C)$, so $\tilde{C}$ must be strictly wilder than $C$.
\end{proof}

\begin{corollary} 
Let  $C$ denote a crumpled $n$-cube such that $\pi_1(\mathrm{Int}$ $C)$ is
a simple group.  Then $C$ is not a maximal element in the "strictly
wilder than" partial ordering.
\end{corollary}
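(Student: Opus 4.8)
The plan is to run the same strategy used for the finitely generated case, but to replace the Grushko rank count by an appeal to simplicity. First I would invoke Theorem 5.3 to obtain a crumpled $n$-cube $\tilde{C}$ and a homeomorphism $h$ with $(\tilde{C},C,h) \in \mathscr{W}_n$; as extracted in the proof of the preceding corollary, the construction of Theorem 5.3 in fact yields $\pi_1(\mathrm{Int}\,\tilde{C}) \cong G \ast \pi_1(\mathrm{Int}\,C)$ for some nontrivial group $G$ (namely $G = \pi_1(\mathrm{Int}\,K)$ for the wild replacement cube $K$). This already shows $\tilde{C}$ is at least as wild as $C$, so the whole task reduces to proving that $C$ is \emph{not} at least as wild as $\tilde{C}$; that will make $\tilde{C}$ strictly wilder than $C$ and hence display $C$ as non-maximal.

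Next I would argue by contradiction, assuming a homeomorphism $H$ with $(C,\tilde{C},H) \in \mathscr{W}_n$ and choosing an associated map $f_H : C \to \tilde{C}$. Since any associated map satisfies $f_H^{-1}(\mathrm{Bd}\,\tilde{C}) = \mathrm{Bd}\,C$, it automatically carries $\mathrm{Int}\,C$ into $\mathrm{Int}\,\tilde{C}$, so Corollary 3.3 furnishes an epimorphism $\phi : \pi_1(\mathrm{Int}\,C) \to \pi_1(\mathrm{Int}\,\tilde{C})$.

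The heart of the matter is then a one-line piece of group theory. Because $\pi_1(\mathrm{Int}\,C)$ is simple, $\ker\phi$ is either trivial or all of $\pi_1(\mathrm{Int}\,C)$; the second option is excluded since the image $\pi_1(\mathrm{Int}\,\tilde{C}) \cong G \ast \pi_1(\mathrm{Int}\,C)$ is nontrivial. Hence $\phi$ is an isomorphism and $\pi_1(\mathrm{Int}\,C) \cong G \ast \pi_1(\mathrm{Int}\,C)$. But a free product of two nontrivial groups is never simple --- for example the normal closure of $G$ is a proper nontrivial normal subgroup --- which contradicts the simplicity of $\pi_1(\mathrm{Int}\,C)$. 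This contradiction rules out $H$ and completes the proof.

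I expect the genuine mathematics here to be routine and the only care needed to be bookkeeping: one must confirm that Theorem 5.3 delivers the free-product description with a nontrivial factor $G$ (exactly the information already used in the finitely generated corollary), and one should read the hypothesis ``$\pi_1(\mathrm{Int}\,C)$ is simple'' in the usual sense that excludes the trivial group, so that the target of $\phi$ is genuinely nontrivial and the normal-closure argument applies.
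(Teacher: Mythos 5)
Your proposal is correct and takes essentially the same route as the paper's proof: invoke Theorem 5.3 to produce $\tilde{C}$ with $(\tilde{C},C,h)\in\mathscr{W}_n$ and $\pi_1(\mathrm{Int}\,\tilde{C})$ a nontrivial free product, then observe that a simple group admits no epimorphism (furnished by Corollary 3.3) onto a nontrivial, non-simple group. Your write-up merely makes explicit the kernel dichotomy and the non-simplicity of free products that the paper's one-line argument leaves implicit.
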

\begin{proof}
Again the construction of Theorem 5.3 provides a crumpled $n$-cube $\tilde{C}$ at least as wild as $C$, the fundamental group of which is a non-trivial free product.  Here $C$  cannot be at least as wild as 
$\tilde{C}$ --- there can be no epimorphism of $\pi_1(\text{Int }C)$  to the non-simple group $\pi_1(\text{Int }\tilde{C})$.
\end{proof}
\begin{corollary} 
If  $C$ is a crumpled n-cube such that $\pi_1(\mathrm{Int}$ $C)$
is a torsion group, then $C$ is not a maximal element in
the "strictly wilder than" partial ordering.
\end{corollary}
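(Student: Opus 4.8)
The plan is to follow the template of Corollaries 5.2 and 5.3, producing a witness $\tilde{C}$ that is strictly wilder than $C$ and then ruling out the reverse comparison by a purely group-theoretic obstruction. First I would invoke Theorem 5.3 to obtain a crumpled $n$-cube $\tilde{C}$ and a homeomorphism $h$ with $(\tilde{C},C,h)\in\mathscr{W}_n$, together with the van Kampen decomposition furnished by that construction, $\pi_1(\text{Int }\tilde{C})\cong G*\pi_1(\text{Int }C)$, where $G$ is the fundamental group $\pi_1(\text{Int }K)$ of the replacement crumpled cube $K$. The non-triviality of the kernel of $(f_h)_\#$ guaranteed by Theorem 5.3 forces $G\neq 1$: the essential loop $\gamma\subset\text{Int }K$ that dies under $f_h$ represents a non-trivial element of $G$, so the free product is genuine.

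Next I would argue that $C$ cannot be at least as wild as $\tilde{C}$. Suppose to the contrary that $(C,\tilde{C},H)\in\mathscr{W}_n$ for some homeomorphism $H$; then, exactly as in Corollary 3.1, an associated map induces an epimorphism $\pi_1(\text{Int }C)\to\pi_1(\text{Int }\tilde{C})$. The hypothesis that $\pi_1(\text{Int }C)$ is a torsion group is now decisive: every homomorphic image of a torsion group is again torsion, so $\pi_1(\text{Int }\tilde{C})\cong G*\pi_1(\text{Int }C)$ would have to be torsion as well. This is the crux, and it splits into two cases. If $\pi_1(\text{Int }C)\neq 1$, then $G*\pi_1(\text{Int }C)$ is a free product of two non-trivial groups and hence, by the normal form theorem for free products, contains an element of infinite order (an alternating product of non-trivial elements drawn from the two factors), contradicting torsion. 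If instead $\pi_1(\text{Int }C)=1$, then $\pi_1(\text{Int }\tilde{C})\cong G\neq 1$, and the trivial group admits no epimorphism onto a non-trivial group, so the reverse comparison already fails at the level of the induced map.

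In either case the required epimorphism is impossible, so $C$ is not at least as wild as $\tilde{C}$; combined with $(\tilde{C},C,h)\in\mathscr{W}_n$ this shows $\tilde{C}$ is strictly wilder than $C$, whence $C$ is not maximal. I expect the only delicate point to be the bookkeeping around the free-product decomposition inherited from Theorem 5.3: specifically, confirming that the factor $G$ is non-trivial (which I would extract from the non-trivial kernel supplied by Theorem 5.3 rather than re-derive) and treating the degenerate case $\pi_1(\text{Int }C)=1$ separately, since there the free-product phrasing of the earlier corollaries collapses and one must instead observe directly that no epimorphism from the trivial group onto $G$ exists.
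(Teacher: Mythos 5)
Your proof is correct and follows essentially the approach the paper intends: the paper states this corollary without its own proof, and your argument---invoking Theorem 5.3 to get $(\tilde{C},C,h)\in\mathscr{W}_n$ with $\pi_1(\text{Int }\tilde{C})\cong G*\pi_1(\text{Int }C)$ a non-trivial free product, then using the induced epimorphism (Corollary 3.1) together with the facts that quotients of torsion groups are torsion and that a free product of two non-trivial groups contains elements of infinite order---is exactly the template of the paper's proofs of Corollaries 5.1 and 5.2. Your separate treatment of the degenerate case $\pi_1(\text{Int }C)=1$ (where the obstruction is simply that the trivial group cannot surject onto $G\neq 1$) is a sensible refinement that the paper leaves implicit.
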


The reader can confirm the existence of infinite families totally ordered
under the "strictly wilder than" relation.

The $n$-cell $B^n$ is the unique minimal element in the partial ordering
on the collection of all closed-$n$-cell-complements.

\begin{theorem} 
Every non-trivial crumpled $n$-cube is strictly wilder than the $n$-cell $B^n$.
\end{theorem}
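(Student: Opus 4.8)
The plan is to verify the two clauses of Definition 5.2 for the ordered pair $(C,B^n)$. The first clause---the existence of a homeomorphism $h$ with $(C,B^n,h)\in\mathscr{W}_n$---is immediate from Theorem 3.1, which asserts precisely that $(C,B^n,h)\in\mathscr{W}_n$ for \emph{every} homeomorphism $h:\text{Bd }C\to\text{Bd }B^n$. Thus $C$ is at least as wild as $B^n$ with no work at all, and the entire content of the theorem lies in ruling out the reverse comparison.

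For the strictness clause I would argue by contradiction. I interpret ``non-trivial'' to mean that $C$ is not itself an $n$-cell, which---recalling that the wild set is the set of boundary points at which $\text{Bd }C$ fails to be locally collared in $C$---is equivalent to $W_C\neq\emptyset$. Suppose, toward a contradiction, that there were a homeomorphism $H:\text{Bd }B^n\to\text{Bd }C$ with $(B^n,C,H)\in\mathscr{W}_n$. Since $\text{Bd }B^n$ is locally flat, hence locally collared, at each of its points, the wild set of $B^n$ is empty. Applying the wild-set containment corollary $W_D\subset h(W_C)$ (Corollary 3.9) to the triple $(B^n,C,H)$ then forces $W_C\subset H(W_{B^n})=H(\emptyset)=\emptyset$, so $W_C=\emptyset$. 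This contradicts the non-triviality of $C$, and therefore no such $H$ exists. Combining the two clauses gives that $C$ is strictly wilder than $B^n$.

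The only step requiring more than a direct citation is the identification of ``non-trivial'' with ``$W_C\neq\emptyset$,'' and I expect this to be the main (though still routine) obstacle. Concretely, one must know that a crumpled $n$-cube whose boundary is locally collared at every point is an $n$-cell: local collaring everywhere upgrades to a global collar by Brown's collaring theorem, whence $C$ is homeomorphic to $\text{Bd }C\times[0,1]$ together with its interior and is an $n$-cell. Granting this standard fact, every non-trivial crumpled cube has non-empty wild set, and the contradiction above goes through. Everything else is a bare appeal to Theorem 3.1 and Corollary 3.9, so the argument is short.
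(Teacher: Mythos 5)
Your proof is correct and takes essentially the same approach as the paper: the forward comparison is exactly Theorem 3.1, and strictness is obtained by transferring the trivial taming structure of $B^n$ across a hypothetical reverse triple $(B^n,C,H)\in\mathscr{W}_n$ to force $C$ to be an $n$-cell. The only difference is cosmetic --- the paper applies Theorem 3.5 directly with $T=\emptyset$ (so $\mathrm{Int}\,C$ is $1$-ULC, hence $C$ is a cell), while you cite Corollary 3.9 (wild set containment), which is itself derived from Theorem 3.5, and then conclude from $W_C=\emptyset$ that $C$ is a cell.
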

 \begin{proof}
That any  crumpled $n$-cube $C$ is at least as wild as $n$-cell $B$ has been established in Theorem 3.1. It suffices to show that 
$(B^n,C,H)$ is never in $\mathscr{W}_n$, no matter which homeomorphism $H:\text{Bd } B^n \rightarrow \text{Bd }C$ is 
under consideration. The empty set is a homotopy taming set for $B^n$.
If $(B^n,C,H) \in \mathscr{W}_n$, then by Theorem 3.5, $\emptyset$  is also a homotopy taming set for $C$.
In other words,  $\text{Int }C$  is 1-ULC.  That can only occur when  $C$  is an $n$-cell.
\end{proof}
The results immediately below are direct applications of Corollary 3.14 and Theorem 3.15.

\begin{theorem} 
If a closed  $n$-cell-complement $C$ is locally flat modulo a wild set, a closed $n$-cell complement $D$ is locally flat modulo
a tame set and $C$ is at least as wild as $D$, then $C$ is strictly wilder than $D$.
\end{theorem}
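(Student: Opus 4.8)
The plan is to read off the conclusion directly from Definition 5.2, whose two requirements split the work unevenly. The first requirement, that there exist a homeomorphism $h$ with $(C,D,h)\in\mathscr{W}_n$, is handed to us free of charge: it is exactly the standing hypothesis that $C$ is at least as wild as $D$. Hence the entire burden of the proof is the second requirement, namely that no homeomorphism $H$ makes $(D,C,H)\in\mathscr{W}_n$, i.e.\ that $D$ fails to be at least as wild as $C$.

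For this I would argue by contradiction and appeal to Corollary 3.14. Suppose some triple $(D,C,H)$ belonged to $\mathscr{W}_n$; then $D$ would be at least as wild as $C$. But by hypothesis $D$ is a closed $n$-cell-complement that is locally flat modulo a tame set, whereas $C$ is locally flat modulo a wild set, and this is precisely the configuration that Corollary 3.14 rules out. The contradiction shows that no such $H$ exists, and together with the free first requirement this yields, via Definition 5.2, that $C$ is strictly wilder than $D$.

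The one place where care is needed is matching the phrase ``locally flat modulo a tame set'' to the hypotheses of Corollary 3.14, whose statement is cast for tame sets of codimension three or greater. Whenever the wild set $W_D$ has codimension at least three the application above is immediate. In the residual low-codimension situations — the dimension-three case being the representative one — I would replace Corollary 3.14 by the piercing-point invariance of Theorem 3.15: a triple $(D,C,H)\in\mathscr{W}_3$ would force $H$ to carry every piercing point of $D$ to a piercing point of $C$, so the non-piercing (hence wild) points of $C$ would have to be dominated under $H$ by the tame wild set $W_D$. Showing that this domination is genuinely impossible — that the tameness of $W_D$ cannot be transported across the boundary homeomorphism $H$ so as to tame the essentially wild $W_C$ — is the real obstacle, and it is exactly what Theorem 3.15, resting on McMillan's characterization of piercing points, is there to overcome.
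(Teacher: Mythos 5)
Your proposal coincides with the paper's own proof: the paper dispatches this theorem (together with the one following it) with the single remark that they are ``direct applications of Corollary 3.14 and Theorem 3.15,'' which is exactly the argument you spell out --- the hypothesis supplies $(C,D,h)\in\mathscr{W}_n$, and Corollary 3.14 forbids any triple $(D,C,H)\in\mathscr{W}_n$, so Definition 5.2 applies. The codimension mismatch you flag (Corollary 3.14 is stated only for tame sets of codimension three or greater, while the theorem says just ``tame set''), and your appeal to Theorem 3.15 in the residual cases, is a genuine imprecision in the paper itself, which it glosses over by citing the two results jointly rather than resolving the low-codimension situation.
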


\begin{theorem} 
If a crumpled $3$-cube $C$ is at least as wild as another crumpled $3$-cube $D$ and $C$ has more non-piercing points than 
$D$, then $C$ is 
strictly wilder than $D$.
\end{theorem}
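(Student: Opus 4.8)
The plan is to argue by contradiction, invoking Theorem 3.15 in both directions. Write $N_C$ and $N_D$ for the sets of non-piercing points of $C$ and $D$, respectively, and let $h:\text{Bd }C \to \text{Bd }D$ be a homeomorphism witnessing the hypothesis $(C,D,h) \in \mathscr{W}_3$ (i.e.\ that $C$ is at least as wild as $D$). The remaining hypothesis is $|N_C| > |N_D|$.

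First I would record what $(C,D,h)\in\mathscr{W}_3$ already supplies. By Theorem 3.15, $h$ carries every piercing point of $C$ to a piercing point of $D$; since $h$ is a bijection of the two boundary spheres, passing to complements gives $N_D \subseteq h(N_C)$, and hence $|N_D| \le |N_C|$. This inequality is consistent with, but weaker than, the standing hypothesis.

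Next, suppose toward a contradiction that $C$ is \emph{not} strictly wilder than $D$. Because $C$ is already at least as wild as $D$, Definition 5.2 then forces the existence of a homeomorphism $H:\text{Bd }D \to \text{Bd }C$ with $(D,C,H) \in \mathscr{W}_3$. Applying Theorem 3.15 to this reversed triple, $H$ carries every piercing point of $D$ to a piercing point of $C$, and the same complement argument yields $N_C \subseteq H(N_D)$, so that $|N_C| \le |N_D|$.

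Finally, combining $|N_D| \le |N_C|$ with $|N_C| \le |N_D|$ through the Cantor--Schr\"oder--Bernstein theorem produces $|N_C| = |N_D|$, contradicting $|N_C| > |N_D|$. Hence no homeomorphism $H$ with $(D,C,H)\in\mathscr{W}_3$ can exist, and $C$ is strictly wilder than $D$. The only point that requires any care is the cardinality bookkeeping when $N_C$ and $N_D$ are infinite, which is exactly what Cantor--Schr\"oder--Bernstein settles; everything else is an immediate double application of Theorem 3.15 together with the bijectivity of $h$ and $H$.
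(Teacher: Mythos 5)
Your proof is correct and follows exactly the route the paper intends: the paper states this theorem as a ``direct application of Theorem 3.15,'' and your contradiction argument---applying Theorem 3.15 to a hypothetical reverse triple $(D,C,H)$ to get $N_C \subseteq H(N_D)$ and hence $|N_C| \le |N_D|$---is precisely that application spelled out. The cardinality bookkeeping via Cantor--Schr\"oder--Bernstein is fine (indeed, the reverse direction alone already contradicts $|N_C| > |N_D|$), so nothing is missing.
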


We conclude this section with an open question.
\begin{question}
Are there any maximal elements in the partial order constructed by "strictly wilder than"?
\end{question}

\section{Sewings of Crumpled Cubes}
The triple $(C,D,h)\in \mathscr{W}_n$ automatically gives rise to a sewing of the two  crumpled cubes by identifying each point 
$x$ of $\text{Bd }C$ with the point $h(x)$ on $\text{Bd }D$. The associated sewing space is denoted $C \cup_h D$. 
It can be viewed as a decomposition space arising  
from a cell-like decomposition of $S^n$ into points
and the fiber arcs of an $n$-dimensional annulus.
This section concentrates on the interplay between $(C,D,h)$ being in $\mathscr{W}_n$ and the sewings $h$  that yield $S^n$.

\begin{theorem}
Suppose $(C,D,h) \in \mathscr{W}_3$,  $C^*$ is
another crumpled $3$-cube and $\theta:\mathrm{Bd}$ $C \to \mathrm{Bd}$ $C^*$ is a homeomorphism such that $C \cup_\theta C^*=S^3$.  
Then $D \cup_{\theta h^{-1}} C^* = S^3$.
\end{theorem}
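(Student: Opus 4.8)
The plan is to reduce the statement to the standard criterion that characterizes when a sewing of two crumpled $3$-cubes produces $S^3$, and then to transport that criterion across $h$ using Theorem 3.5. The relevant criterion, from the theory of sewings of crumpled cubes (Daverman), runs as follows: for crumpled $3$-cubes $A$ and $B$ and a sewing homeomorphism $g:\mathrm{Bd}\,A\to\mathrm{Bd}\,B$, the sewing space $A\cup_g B$ is $S^3$ if and only if $A$ and $B$ possess $0$-dimensional homotopy taming sets $T_A\subset\mathrm{Bd}\,A$ and $T_B\subset\mathrm{Bd}\,B$ with $g(T_A)\cap T_B=\emptyset$. Since every crumpled $3$-cube already carries a $0$-dimensional homotopy taming set, the content of the criterion lies entirely in the disjointness \emph{after} sewing. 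First I would apply this to the hypothesis $C\cup_\theta C^{*}=S^3$ to extract a $0$-dimensional homotopy taming set $T_C$ for $C$ and one $T^{*}$ for $C^{*}$ with $\theta(T_C)\cap T^{*}=\emptyset$.

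Next I would move the $C$-side data to $D$. Since $(C,D,h)\in\mathscr{W}_3$, Theorem 3.5 guarantees that $h(T_C)$ is a homotopy taming set for $D$; because $h$ is a homeomorphism it carries $T_C$ to a set of the same topological type, in particular a $0$-dimensional one (consistent with Corollary 3.8), so $h(T_C)$ qualifies for the criterion on the $D$-side, while $T^{*}$ is retained for $C^{*}$. The decisive computation is then immediate:
\[
(\theta h^{-1})\big(h(T_C)\big)\cap T^{*}=\theta(T_C)\cap T^{*}=\emptyset.
\]
Thus $D$ and $C^{*}$ admit $0$-dimensional homotopy taming sets whose images are disjoint under the sewing homeomorphism $\theta h^{-1}:\mathrm{Bd}\,D\to\mathrm{Bd}\,C^{*}$, and the criterion delivers $D\cup_{\theta h^{-1}}C^{*}=S^3$, as desired. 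The conceptual engine is that $C$ being at least as wild as $D$ lets any taming set for $C$ descend to one for $D$, while the substitution $\theta\mapsto\theta h^{-1}$ is arranged precisely so that the pushed-forward taming set $h(T_C)$ returns to $\theta(T_C)$ after sewing.

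I expect the only genuine obstacle to be bookkeeping around the exact hypotheses of the sewing criterion: one must confirm that the type of taming set it demands (whether merely $0$-dimensional, or an $F_\sigma$ union of tame Cantor sets, or a Type 1 structure) is exactly the type preserved by $h$ together with Theorem 3.5 and Corollaries 3.6--3.8. Since a homeomorphism preserves all such structural features and Theorem 3.5 preserves the homotopy-taming property itself, this matching should go through for whichever version of the criterion one invokes; the disjointness, which is the crux, transfers for free via the displayed identity. The restriction to dimension $3$ in the statement is natural here, as it is precisely in that dimension that every crumpled cube is guaranteed the requisite $0$-dimensional taming set on which the criterion rests.
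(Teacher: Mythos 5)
Your proposal is correct and follows essentially the same route as the paper: both extract homotopy taming sets $T$, $T^*$ with $\theta(T)\cap T^*=\emptyset$ from Eaton's characterization of sewings of crumpled $3$-cubes yielding $S^3$, push $T$ forward to the homotopy taming set $h(T)$ for $D$ via Theorem 3.5, and verify $(\theta h^{-1})(h(T))\cap T^*=\emptyset$ to conclude by the same characterization. The only slips are cosmetic: the characterization you invoke is Eaton's (the paper's citation), not Daverman's, and the paper does not need the $0$-dimensionality bookkeeping you worry about, since the disjointness of the taming sets is all the criterion requires.
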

\begin{proof}
By Eaton's characterization of the sewings of crumpled 3-cubes that yield $S^3$
\cite{Eaton},  $C,C^*$ have homotopy taming sets $T,T^*$, respectively, such that 
$\theta(T) \cap T^* = \emptyset$.  Then $h(T)$ is a homotopy taming set for $D$,
and clearly $\theta h^{-1}(h(T)) \cap T^* = \emptyset.$  Hence, again by 
\cite{Eaton}, $D \cup_{\theta h^{-1}} C^* = S^3$.
\end{proof}

The same argument fails in higher dimensional
settings  since the homotopy taming set mismatch feature is a sufficient but not 
a necessary condition for a sewing of crumpled cubes 
to yield $S^n$.

We will make use of the following Controlled Homotopy Extension Theorem.

\begin{theorem}
For each $\epsilon > 0$
there exists $\delta > 0$ such that
given any map $f:X \to Z$ of a normal space to $Z$
and any map $F_A:A \to Z$ defined on a closed subset $A$
of $X$  which is $\delta$-close to $f|A$,
then $F_A$ admits a continuous extension $F:X \to Z$ which is $\epsilon$-close to $f$.
\end{theorem}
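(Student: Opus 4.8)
The plan is to lean on the standing hypothesis (operative in every application in this paper) that the target $Z$ is a compact ANR, and to do the extension not in $Z$ itself but in an ambient linear space where I can control sizes additively. Concretely, I would embed $Z$ as a closed subset of a Euclidean space $\mathbb{R}^N$; since $Z$ is an ANR there is an open neighborhood $U \supset Z$ and a retraction $R:U \to Z$. Compactness of $Z$ makes $R$ uniformly continuous on a compact neighborhood of $Z$ inside $U$, and this is exactly what delivers a $\delta$ depending on $\epsilon$ (and on $Z,R$) alone, independent of $X,A,f,F_A$. So given $\epsilon>0$ I would first choose $\eta>0$ with $R$ sending $\eta$-close points of this compact neighborhood to $\epsilon$-close points of $Z$, and then choose $\delta\in(0,\eta)$ small enough that the closed $\delta$-neighborhood of $Z$ lies inside $U$.

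The core of the construction is a \emph{controlled} extension carried out on the difference map. Given $f:X\to Z\subset\mathbb{R}^N$ and $F_A:A\to Z$ with $\|F_A(a)-f(a)\|<\delta$ on $A$, I would form $d_A := F_A - f|A : A \to \bar{B}_\delta(0)$, which lands in the closed $\delta$-ball. The closed ball is a compact AR and hence an absolute extensor for normal spaces: it is a retract of the cube $[-\delta,\delta]^N$, which is an AE by coordinatewise Tietze extension. Thus $d_A$ extends to $d:X\to \bar{B}_\delta(0)$ with $\|d(x)\|\le\delta$ for all $x$. Setting $G := f+d : X \to \mathbb{R}^N$ then yields a map that restricts to $F_A$ on $A$ and satisfies $\|G(x)-f(x)\| = \|d(x)\| \le \delta$ everywhere; in particular each $G(x)$ lies within $\delta$ of $f(x)\in Z$, so $G(x)\in U$.

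Finally I would put $F := R\circ G : X\to Z$. On $A$ one gets $F(a)=R(F_A(a))=F_A(a)$ since $R$ fixes $Z$, so $F$ extends $F_A$; and since $G(x)$ is $\delta$-close (hence $\eta$-close) to $f(x)=R(f(x))$, uniform continuity of $R$ gives $d(F(x),f(x))<\epsilon$. This is the required extension.

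The hard part — indeed the only place needing real care — is maintaining control. An arbitrary Tietze or Dugundji extension of $F_A$ to all of $X$ would be uncontrolled away from $A$, so the trick is to extend the \emph{small} map $d_A$ rather than $F_A$, and to insist the extension stay inside the $\delta$-ball. That single constraint simultaneously forces $G$ into the retraction domain $U$ and keeps it $\delta$-close to $f$, after which composition with the uniformly continuous $R$ converts $\delta$-control in $\mathbb{R}^N$ into $\epsilon$-control in $Z$. The two points I would verify carefully are that the closed ball is genuinely an absolute extensor for merely normal (not necessarily metrizable) $X$, and that compactness of $Z$ is precisely what makes the produced $\delta$ uniform.
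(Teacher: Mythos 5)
Your proof is correct, but it takes a genuinely different route from the paper's. The paper's two-line argument is homotopy-theoretic: choose $\delta$ so that any two $\delta$-close maps into $Z$ are $\epsilon$-homotopic (a standard compact-ANR property), then extend the resulting small homotopy from $f|A$ to $F_A$ over $X$ by the Urysohn-tapered motion-control device used in Section 5 to construct standard flattenings, and let $F$ be the terminal map of the tapered homotopy. You instead linearize: Tietze-extend the small difference $F_A - f|A$ inside the $\delta$-ball of $\mathbb{R}^N$, add it back to $f$, and push into $Z$ with the neighborhood retraction $R$. Both arguments rest on the same unstated hypothesis that $Z$ is a compact ANR; that is harmless in all of the paper's applications, since a crumpled cube, embedded as a closed $n$-cell-complement, is a retract of the open set obtained by adjoining to it an open collar on the boundary of the complementary $n$-cell, hence is a compact ANR. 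Your route is more elementary and in fact proves the statement exactly as written: the paper's homotopy extension runs in $X \times I$, whose normality does not follow from normality of $X$ (Dowker's example), whereas coordinatewise Tietze extension needs only $X$ itself to be normal; the paper's sketch implicitly assumes binormality, which is automatic for the metric domains appearing in Section 6 but not for general normal $X$. The costs of your version are minor: embedding in $\mathbb{R}^N$ presumes $Z$ finite-dimensional (true for crumpled cubes; for a general compact ANR one would embed in the Hilbert cube and argue the same way), and you should record that $\delta$-closeness in the Euclidean metric and in the metric of $Z$ are uniformly comparable by compactness, so your $\delta$ transfers to the metric in which the theorem is invoked. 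What the paper's approach buys is that it is intrinsic to $Z$ --- no embedding needed --- and it recycles motion-control machinery already established earlier in the paper.
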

\begin{proof}
Choose a $\delta > 0$ for which any two $\delta$-close 
maps are $\epsilon$-homotopic and then
reuse the motion control aspect of the proof that standard flattenings exist.
\end{proof}

\begin{theorem}
Suppose $(C,D,h) \in \mathscr{W}_n$, $n > 4$, $C^*$ is
another crumpled $n$-cube and $\theta:\mathrm{Bd}$ $C \to \mathrm{Bd}$ $C^*$ is a homeomorphism such that $C \cup_\theta C^*=S^n$.  
Then $D \cup_{\theta h^{-1}} C^* = S^n$.
\end{theorem}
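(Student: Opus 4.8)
The plan is to reduce both sewings to their Disjoint Disks Properties and then transfer that property from the first sewing to the second by the very mechanism used to prove Theorem 3.5. Since $n > 4$, each sewing space is a resolvable generalized $n$-manifold --- it is the image of the cell-like decomposition of $S^n$ into points and the fiber arcs of the annulus separating the two crumpled cubes --- and it is genuinely $S^n$ precisely when it has the Disjoint Disks Property (equivalently, the decomposition shrinks); this is the general-position characterization of such decomposition spaces, in the spirit of \cite[Theorem 28.9]{Daverman 1}. Thus the hypothesis $C \cup_\theta C^* = S^n$ says that the sewing space $\Sigma_1 = C \cup_\theta C^*$ has the DDP, and it suffices to prove that $\Sigma_2 = D \cup_{\theta h^{-1}} C^*$ has the DDP.

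The first concrete step is to record a natural map $G:\Sigma_1 \to \Sigma_2$. Writing both sewings in coordinates on the sphere $\text{Bd }C$, one sees that $C^*$ is attached by the \emph{same} homeomorphism $\theta$ in each sewing (because $\theta h^{-1}\circ h = \theta$), while on the opposite side $C$ is replaced by $D$ compatibly with the change of seam coordinate from $\mathbbm{1}$ to $h$. Hence the rule $G := f_h$ on $C$ and $G := \mathbbm{1}$ on $C^*$ is well defined; it carries the seam $\text{Bd }C$ homeomorphically (via $h$) onto the seam $\text{Bd }D$, restricts to the identity on the $C^*$-part, sends $\text{Int }C$ onto $\text{Int }D$ (since $f_h^{-1}(\text{Bd }D)=\text{Bd }C$), and has all of its nondegenerate point-preimages lying over $\text{Int }D$.

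To verify the DDP of $\Sigma_2$, take maps $\mu_1,\mu_2:B^2 \to \Sigma_2$ and $\epsilon>0$. Because $\text{Int }D$ and $\text{Int }C^*$ are open manifolds of dimension $\geq 5$, general position removes all intersections lying off the seam, so only intersections on $\text{Bd }D$ remain to be eliminated, and this is achieved by pushing the offending disk-pieces off the seam into one of the two sides. Pushes into $\text{Int }C^*$ are available verbatim from the DDP of $\Sigma_1 = S^n$, since the $C^*$-collar is shared by the two sewings. A push of a disk-piece off the seam into $\text{Int }C$ is transferred to the $D$-side exactly as in the proof of Theorem 3.5: the boundary loop of the piece, now a small loop in $\text{Int }D$ near $\text{Bd }D$, is pulled back through $f_h$ to a loop in $\text{Int }C$ near $\text{Bd }C$ using the local $\pi_1$-epimorphism of Lemma 3.1; that loop bounds a small singular disk in $\text{Int }C$ by the DDP of $\Sigma_1$; and $f_h$ carries this disk forward to a small singular disk in $\text{Int }D$ realizing the desired push-off. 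Throughout, seam-disjointness is preserved because $G$ is a homeomorphism on the seam and sends the two sides into the \emph{disjoint} open sets $\text{Int }D$ and $\text{Int }C^*$; the only intersections that the non-injectivity of $f_h$ can create lie inside the manifold $\text{Int }D$ and are cleared by a final general-position adjustment.

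I expect the main obstacle to be the size-and-closeness bookkeeping in the last step: the transferred disks must end up $\epsilon$-close to the \emph{given} maps $\mu_1,\mu_2$ in $\Sigma_2$, not merely close to their $G$-images, even though $f_h$ need not be a homeomorphism on any neighborhood of the seam. I would control this with the Controlled Homotopy Extension Theorem (Theorem 6.2) together with the uniform-smallness refinement of Lemma 3.1 already exploited in Theorem 3.5, choosing the neighborhoods $N_x$ of seam points and the neighborhoods $M_x$ of their preimages so fine that the loop-lifting, disk-filling, and forward-mapping operations all take place within preassigned $\epsilon$-neighborhoods. The essential point, which makes the transfer go through despite $C$ being potentially wilder than $D$, is that the side on which wildness must be compensated ($C^*$) is identical in the two sewings, while the remaining side is only made \emph{tamer} on passing from $C$ to $D$; since pushing loops forward under $f_h$ requires only the epimorphism direction of Lemma 3.1, no $2$-dimensional lifting of disks is needed. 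Once the metric control is in place, the DDP of $\Sigma_2$ follows, and hence $D \cup_{\theta h^{-1}} C^* = S^n$.
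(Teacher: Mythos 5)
Your reduction to the Disjoint Disks Property of $\Sigma_2 = D \cup_{\theta h^{-1}} C^*$ and your map $G$ (the paper's $F_h$: equal to $f_h$ on $C$ and the identity on $C^*$) coincide with the paper's setup, but the heart of your argument --- eliminating seam intersections by ``pushing the offending disk-pieces off the seam into one of the two sides'' --- is a step that fails. A small disk-piece meeting the seam can be pushed entirely into $\text{Int }C^*$ (resp.\ $\text{Int }D$) only if its boundary loop bounds a small singular disk in $\text{Int }C^*$ (resp.\ $\text{Int }D$); that is precisely the 1-ULC condition whose failure \emph{is} wildness. In particular, the claim that the lifted loop ``bounds a small singular disk in $\text{Int }C$ by the DDP of $\Sigma_1$'' is a non sequitur: the DDP of $S^n = C\cup_\theta C^*$ lets two singular disks be made disjoint \emph{from each other}, but general position never removes intersections of a disk with a codimension-one sphere, and small loops in $\text{Int }C$ near $\text{Bd }C$ are only guaranteed to bound small singular disks in $C$ (possibly meeting $\text{Bd }C$). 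If such loops always bounded in $\text{Int }C$, then $\text{Int }C$ would be 1-ULC and $C$ would be an $n$-cell (compare the proof of Theorem 5.4); the same objection kills the pushes into $\text{Int }C^*$ when $C^*$ is wild. Indeed, an argument that pushes all disk-pieces off the seam would show that \emph{every} sewing of crumpled cubes yields $S^n$, which is false.

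What the paper actually does at this point (its Step 6) is different in kind, and it is the idea your proposal is missing. First (Steps 1--5, which you also cannot skip) the seam-preimages are made 1-dimensional, the 1-dimensional parts $Z_e$ that map \emph{into} the seam are embedded disjointly in $\text{Bd }D$ missing homotopy taming sets of both $D$ and $C^*$ --- this, not general position, is what handles the portions of the disks lying in the seam --- and the residual 0-dimensional seam-preimages are covered by a null sequence of small disks whose boundaries map off the seam. Then those small pieces are \emph{approximately lifted} through $F_h$ to singular disks in the genuine sphere $S^n = C\cup_\theta C^*$; the lifts are allowed to go on meeting the seam. The two lifted families are made disjoint from each other by general position \emph{in} $S^n$ (possible since $2+2 < n$), and only then is $F_h$ applied. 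Because $F_h$ is one-to-one over $C^*\cup\text{Bd }D$, every surviving intersection of the image disks lies in the open manifold $\text{Int }D$, where a final general position adjustment removes it. So the hypothesis $C\cup_\theta C^* = S^n$ is used to supply an ambient manifold in which the lifted disks can be separated from one another, with injectivity of $F_h$ over the seam transferring that separation down to $\Sigma_2$ --- not to push anything off the seam.
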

\begin{proof}
Since $\Sigma = D \cup_{\theta h^{-1}} C^*$ is the cell-like image of $S^n$, by Edwards's Cell-Like Approximation 
Theorem \cite{Edwards} (or \cite{Daverman 1})
it suffices to prove that this sewing space satisfies the Disjoint Disks Property.  
To start the process, consider maps  $\psi_1,\psi_2:I^2 \to \Sigma$.
We will produce approximating maps with disjoint images  in six steps;
the only non-routine step, where the hypothesis that $C \cup_{\theta} C^* = S^n$ comes into play, is the final one. 

Step 1: Approximating to make the preimages $Z_1,Z_2$ of Bd $D$ = Bd $C^*$  1-dimensional.  Generically, for $e \in \{1,2\}$ $Z_e = \psi^{-1}_e(\mathrm{Bd }D)$ 
"ought to be" 1-dimensional.  If not, pushing certain points one at a time into
$\Sigma - \mathrm{Bd }D$, subject to convergence controls, 
we can readily modify $\psi_e$  slightly 
so that the new map (still called $\psi_e$) sends a countable,
dense subset of $I^2$ to $\Sigma - \mathrm{Bd }D$,
which assures 1-dimensionality. 

Step 2: Approximating to make images of  $Z_1,Z_2$ disjoint, 1-LCC subsets of $D$ and $C^*$.  Let $T,T^*$ denote $\sigma$-compact, homotopy taming sets 
for $D,C^*$, respectively.  According to \cite{Daverman 7}, $T$ and 
$T^*$ can be taken to have dimension at most 1.
Set $X_e = \psi^{-1}_e(D)$ and $Y_e = \psi^{-1}_e(C^*)$,
and note that $Z_e = X_e \cap Y_e$.
Approximate each $\psi_e|Z_e$ by an embedding $\lambda_e:Z_e \to \mathrm{Bd} D$ such that
$\lambda_1,\lambda_2$ have disjoint images in 
Bd $D - (T \cup h\theta^{-1}(T^*))$.
Require $\lambda_e$ to be so close to $\psi_e|Z_e$ that,
by the Controlled Homotopy Extension Theorem,
$\lambda_e$ extends to a map $\psi'_e:I^2 \to \Sigma$ close to $\psi_e$.
This must be done in two seperate operations, one extending $\lambda_e$
to an approximation of $\psi_e|X_e:X_e \to D$  and the other extending 
$\lambda_e$ to an approximation of $\psi_e|Y_e:Y_e \to C^*$.
It should be clear that $\lambda_e(Z_e) = \psi'_e(Z_e)$ is a 
1-LCC subset of each crumpled cube; for instance, since $T$ is a homotopy taming set for $D$, any map $I^2 \to D$ can  be approximated  
by a map $I^2 \to T \cup \mathrm{Int }D$, the image of which avoids 
$\lambda_e(Z_e)$.  It may be worth observing that,
unfortunately, $Z_e$ is not the complete preimage 
of Bd $D = \mathrm{Bd} C^*$ under $\psi'_e$. 

Step 3: Approximating to make the image of each $Z_e$ disjoint from the other
singular disk.  This step makes use of properties of homotopy taming sets.
The maps $\psi'_e|X_e:X_e \to D$ and
$\psi'_e|Y_e:Y_e \to C^*$ can be approximated,
fixing $\psi'_e|Z_e$.
by maps $\psi^*_e:X_e \cup Y_e \to \Sigma$ such that 
$$\psi^*(X_e - Z_e) \subset T \cup \mathrm{Int} D \mathrm{~ and ~} 
\psi^*(Y_e - Z_e) \subset T^* \cup \mathrm{Int} C^*.$$
It follows that
$$\psi^*_1(Z_1) \cap \psi^*_2(I^2) = \emptyset = \psi^*_1(I^2) \cap \psi^*_2(Z_2).$$
Although we will not write anything further to this effect explicitly, in successive steps we impose controls to maintain this disjointness feature.

Step 4: Approximating to make preimages of Bd $D$ in 
$I^2 - Z_e$ 0-dimensional.  This is a standard
operation.  Working in $D$ and $C^*$ separately, we
use the fact that the boundary is 0-LCC in the crumpled 
cube to approximate $\psi^*_e|X_e - Z_e$ 
and $\psi^*_e|Y_e - Z_e$ 
by maps $\Psi^*_1,\Psi^*_2:I^2 \to \Sigma$ 
that put the
1-skeleta of systems of finer and finer triangulations of the respective domains into the interiors of the relevant crumpled cubes.

Step 5: Covering the preimage of Bd $D$ in $I^2 - Z_e$
by a null sequence of pairwise disjoint disks.
Set $P_e = (\psi^*_e)^{-1}(\mathrm{Bd }D) - Z_e$,
and express it as a countable union of compact, 0-dimensional sets $K_1,K_2,\ldots$. Cover $K_1$ by a finite 
collection $\xi^1_e,\ldots,\xi^k_e$ of small disks (more about how small in the next step) in $I^2 - Z_e$ whose boundaries
miss $P_e$.  Then cover the part of $K_2$ not covered by the
$\xi^j_e$ by a finite collection of much smaller disks.
The latter should be pairwise disjoint and disjoint from the first collection. Continue in the same way.

Step 6: Approximating to make the intersection of singular disks disjoint. The heart of the matter is to make
the intersections of those disks disjoint from 
$\mathrm{Bd }D = \mathrm{Bd }C^*$.
Let $f_h:C \to D$ be a map associated with $(C,D,h)$ being in 
$\mathscr{W}_n$.  Extend  $f_h$ to a map  $F_h:C \cup_{\theta} C^* \to  \Sigma = D \cup_{\theta h^{-1}} C^*$ via the identity on $C^*$.  

The idea is to cover the preimages of Bd $D$ by null sequences of pairwise disjoint disks $\xi^j_e$ in $I^2 - Z_e$
as in Step 5, to approximately lift (with respect to $F_h$)
the restriction of $\Psi^*_e$ on those disks to maps into
$C \cup_{\theta} C^* = S^n$,
to adjust the lifted maps on these disks to pairwise disjoint 
embeddings in $S^n$ and to apply $F_h$.

Lifting singular disks mapped into $C^*$ is no problem, since
$F_h$ restricts to a homeomorphism over $C^*$.
In this paragraph we describe how to approximately lift certain singular disks mapped into $D$.
Given $\epsilon > 0$ choose $\delta > 0$ such that subsets $A$ of $C \cup_\theta C^*$ having diameter less than $\delta$  are mapped via $F_h$ to sets of diameter less than $\epsilon/2$.  Next, identify $\delta' > 0$
for which loops in $C \subset C \cup_\theta C^*$ of diameter less than  $\delta'$ bound singular disks in $C$  of diameter less than $\delta$. 
Build an open cover $U_1,\ldots,U_k$ of Bd $D$ in $D$ by connected open subsets of $D$ that meet Bd $D$ in connected sets and for which each $(F_h|C)^{-1}(U_i)$ has diameter less than $\delta'$.  Then find a compact neighborhood $Q$ of Bd $D$  in $D$  covered by the $U_i$, and let $\eta \in (0,\epsilon/4)$ be a Lebesgue number for this cover of $Q$. By Lemma 3.1 and this choice of $\eta$ each $\eta$-small loop $\gamma$ in 
$\mathrm{Int}D \cap Q$  is homotopic in the intersection of  Int $D$ with one of the $U_i$ 
to the image of a loop $\gamma'$ in Int $C$, and $\gamma'$ bounds a singular disk $\xi'$  in $C$ whose image under $F_h$ has diameter less than $\epsilon/2$.  We construct null sequences
$\xi^j_e \subset I^2 - Z_e$ ($j = 1,2,\ldots$)
of pairwise disjoint disks, each of diameter less than $\eta$,
whose interiors cover the 0-dimensional set  $P_e$.
We split each $\xi^j_e$ into an annulus $A^j_e$ and
disk $E^j_e$, the union of which equals $\xi^j_e$
and intersection of which equals $\partial E^j_e \subset
\partial A^j_e$.  From the construction 
just described we see that each
$\Psi^*_e|I^2 - \cup_j \mathrm{Int }\xi^j_e$ extends over the various $\xi^j_e$ to $F_h \nu^j_e$ on $E^j_e$ 
and as a short homotopy in $\Sigma - \mathrm{Bd }D$
on $A^j_e$ (where $\nu^j_e:E^j_e \to C \cup_\theta C^*$).
Controls on sizes of $\xi^j_e, A^j_e, E^j_e$ and shortness
of the homotopy on $A^j_e$ must be increasingly stringent as $j \to \infty$.
Denote the new maps as $\Psi'_1,\Psi'_2$.

It is a simple matter to perform a general position adjustment in $S^n = C \cup_\theta C^*$
to make the images of the various $E^j_e$ under $\nu_1,\nu_2$
pairwise disjoint, where $\nu_1,\nu_2$ denote the obvious union of maps.  That is, approximate these $\nu_e$
by $\nu'_e$, fixing $\cup_j \partial E^j_e$, so that
$$\nu'_1(\cup_j E^j_1) \cap \nu'_2(\cup_j E^j_2) 
= \emptyset.$$
Then the maps $\Psi_e:I^2 \to \Sigma$
defined as $F_h \nu'_e$ on $\cup_j E^j_e$
and as $\Psi'_e$ elsewhere satisfies
$\Psi_1(I^2) \cap \Psi_2(I^2) \subset \mathrm{Int }D$,
since $F_h$ is 1-1 over $C^*$.
A final general position adjustment
affecting only points sent into Int $D$ 
eliminates all intersections for the
images of $\Psi_1,\Psi_2$.
\end{proof}

\begin{corollary} 
If $(C_1,D_1,h_1)$, $(C_2,D_2, h_2) \in \mathscr{W}_n$ and 
$\theta:\mathrm{Bd}$ $C_1 \to \mathrm{Bd}$ $C_2$ is a sewing such that 
$C_1 \cup_\theta C_2 = S^n$, then 
$D_1 \cup_{h_2 \theta h_1^{-1}} D_2 = S^n$. 
\end{corollary}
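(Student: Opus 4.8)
The plan is to deduce this two-sided statement from the one-sided replacement result already in hand (Theorem 6.3 when $n>4$, and Theorem 6.1 when $n=3$) by applying it \emph{twice}, once on each side of the sewing. The only extra ingredient needed is the elementary symmetry of the sewing operation, namely that $A \cup_\phi B$ and $B \cup_{\phi^{-1}} A$ name the same identification space; this lets me maneuver whichever crumpled cube I wish to replace into the ``first slot'' to which the hypotheses of the replacement theorem apply.

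First I would replace $C_1$ by $D_1$. Applying the replacement theorem with $C = C_1$, $D = D_1$, $h = h_1$, and with $C^* = C_2$ and the given sewing $\theta$, the hypothesis $C_1 \cup_\theta C_2 = S^n$ yields $D_1 \cup_{\theta h_1^{-1}} C_2 = S^n$. Note that $\theta h_1^{-1}:\mathrm{Bd}\,D_1 \to \mathrm{Bd}\,C_2$ has the correct domain and range to be a sewing of $D_1$ with $C_2$.

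Next I would replace $C_2$ by $D_2$. Rewriting the previous conclusion via symmetry as $C_2 \cup_{h_1 \theta^{-1}} D_1 = S^n$, I apply the replacement theorem again, now with $C = C_2$, $D = D_2$, $h = h_2$, taking $C^* = D_1$ and the sewing $h_1 \theta^{-1}:\mathrm{Bd}\,C_2 \to \mathrm{Bd}\,D_1$. Since $(C_2, D_2, h_2) \in \mathscr{W}_n$, this produces $D_2 \cup_{(h_1 \theta^{-1}) h_2^{-1}} D_1 = S^n$. Reversing the roles of the two cubes one last time and computing $(h_1 \theta^{-1} h_2^{-1})^{-1} = h_2 \theta h_1^{-1}$, I obtain $D_1 \cup_{h_2 \theta h_1^{-1}} D_2 = S^n$, exactly as required.

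I do not expect any genuine obstacle here: all of the substantive geometry (the Disjoint Disks Property verification via the Cell-Like Approximation Theorem) is already carried out in the proof of Theorem 6.3, and the corollary is essentially a formal bookkeeping argument. The only point demanding care is tracking the directions and composition order of the homeomorphisms $h_1, h_2, \theta$ and invoking the sewing symmetry at the right moments so that the final sewing map composes precisely to $h_2 \theta h_1^{-1}$. I would also remark that the dimension hypotheses are inherited from whichever replacement theorem is used, so the argument covers $n = 3$ via Theorem 6.1 and $n > 4$ via Theorem 6.3.
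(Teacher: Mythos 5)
Your proof is correct and is essentially the intended argument: the paper states this corollary without proof precisely because it follows from a double application of the replacement theorem (Theorem 6.3 for $n>4$, Theorem 6.1 for $n=3$), using the symmetry $A \cup_\phi B = B \cup_{\phi^{-1}} A$ to swap slots, and your bookkeeping of $h_2\theta h_1^{-1}$ is accurate. Your closing caveat about dimension (the argument leaving $n=4$ untouched) is a limitation shared by the paper itself, since the corollary sits directly under Theorem 6.3 and implicitly inherits its hypotheses.
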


\begin{theorem} 
If $(C,D,h) \in \mathscr{W}_n$ and $C \cup_h D = S^n$, $n \geq 5,$ then $D$ contains disjoint 
homotopy taming sets $T$ and $T'$.
\end{theorem}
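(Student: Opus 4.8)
The plan is to turn the hypothesis into a statement about the \emph{double} of $D$ and then manufacture the two taming sets from that sphere by a disjoint-disks argument.

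First I would invoke Theorem 6.3 with $C^{*}=D$ and $\theta=h$. Since $C\cup_h D=S^n$ and $n\geq 5>4$, its conclusion $D\cup_{\theta h^{-1}}C^{*}=S^n$ becomes $D\cup_{\mathbbm{1}}D=S^n$; that is, the $(n-1)$-sphere $\Sigma_0=\text{Bd }D$ splits $S^n$ into two crumpled cubes $D_1,D_2$, each a copy of $D$, glued to one another by the identity along $\Sigma_0$. Because both identifications $D_1\cong D\cong D_2$ restrict to the identity on $\Sigma_0$, it is enough to produce a homotopy taming set $T\subset\Sigma_0$ for $D_1$ and a homotopy taming set $T'\subset\Sigma_0$ for $D_2$ with $T\cap T'=\emptyset$: as subsets of $\text{Bd }D$ these are then disjoint homotopy taming sets for $D$ itself. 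It is worth noting that Theorem 3.5 already hands us one homotopy taming set for $D$ at no cost (e.g.\ $h(T_C)$ for a taming set $T_C$ of $C$), so the entire difficulty is disjointness.

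Next I would fix $\sigma$-compact, at most $1$-dimensional homotopy taming sets $T_1^{0},T_2^{0}$ for $D_1,D_2$ (these exist by \cite{Daverman 7}) and build $T,T'$ from disk traces on $\Sigma_0$. Since $D_1\cup_{\mathbbm{1}}D_2=S^n$, Edwards's Cell-Like Approximation Theorem \cite{Edwards} guarantees that the defining arc decomposition is shrinkable and that the sewing space has the Disjoint Disks Property. I would then enumerate countable families of test disks $\mu_1^{j}:I^2\to D_1$ and $\mu_2^{k}:I^2\to D_2$ dense among all singular disks, push each into $T_i^{0}\cup\text{Int }D_i$ so that it meets $\Sigma_0$ only inside the $1$-dimensional set $T_i^{0}$, and then exploit the sphere structure, together with the Controlled Homotopy Extension Theorem (Theorem 6.2) for the motion control, to nudge the pushed disks so that the $\Sigma_0$-traces inherited from the $D_1$-side become disjoint from those inherited from the $D_2$-side, tightening the size and convergence controls as the enumeration advances. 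Taking $T$ to be the (suitably closed) union of the surviving $D_1$-traces and $T'$ the union of the $D_2$-traces yields the required disjoint pair.

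The hard part will be separating the seam-traces near the common wild set $W\subset\Sigma_0$ while keeping each disk inside its own side. Ambient general position in the manifold $S^n$ does not suffice, since a generic push would carry part of a $D_1$-disk across $\Sigma_0$ into $D_2$, and at genuinely wild points one cannot simply retract a trace off $\Sigma_0$ into an interior. This is precisely where $D\cup_{\mathbbm{1}}D=S^n$ --- not mere general position --- is indispensable: shrinkability of the arc decomposition supplies exactly the room needed to pull the two sides' taming data apart along $\Sigma_0$. The mechanism is the higher-dimensional counterpart of Eaton's dimension-$3$ characterization used in Theorem 6.1 and of the equivalence, exploited in the spin construction of Section 4, between the Disjoint Disks Property at $\beta$ and the existence of homotopy taming sets meeting $\beta$ disjointly; I expect the bookkeeping of the convergence controls to be the only remaining labor.
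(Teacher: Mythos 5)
Your first step is fine: applying Theorem 6.3 with $C^{*}=D$ and $\theta=h$ does yield $D\cup_{\mathbbm{1}}D=S^{n}$, and it is correct that producing disjoint homotopy taming sets for the two copies $D_{1},D_{2}$ of $D$ glued along the seam $\Sigma_{0}=\mathrm{Bd}\,D$ would finish the job. But the heart of the theorem is the second step, and there your proposal has a genuine gap. The mechanism you offer --- that ``shrinkability of the arc decomposition supplies exactly the room needed to pull the two sides' taming data apart,'' invoked as a higher-dimensional counterpart of Eaton's characterization --- is precisely the principle the paper disclaims immediately after Theorem 6.1: in higher dimensions the mismatch of homotopy taming sets is sufficient but \emph{not} necessary for a sewing to yield $S^{n}$, so no Eaton-type equivalence lets you pass from ``this sewing gives $S^{n}$'' to ``there exist disjoint taming sets.'' Knowing that the decomposition is shrinkable, or that $S^{n}$ has the Disjoint Disks Property, does not by itself tell you how to separate disk traces sitting on the wild set of $\Sigma_{0}$; you correctly isolate this as the hard point, but you never supply an argument for it.

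What is missing is the one concrete idea on which the paper's proof turns: a retraction $r:S^{n}\to D$ with $r^{-1}(\mathrm{Bd}\,D)=\mathrm{Bd}\,D$. In the paper this comes from applying the associated map $f_{h}:C\to D$ on the summand $C$ of $C\cup_{h}D=S^{n}$ (and the identity on $D$); in your doubled picture it is even simpler --- the fold map $D_{1}\cup_{\mathbbm{1}}D_{2}\to D$. With such an $r$ in hand, your objection that ``ambient general position does not suffice because a push may carry a $D_{1}$-disk across $\Sigma_{0}$'' evaporates: given $\mu_{1},\mu_{2}:I^{2}\to D$, make them disjoint by general position in $S^{n}$ (possible since $2+2<n$), letting them cross $\Sigma_{0}$ freely, and then compose with $r$. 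Because $r$ restricts to the identity on $D$ and is one-to-one over $\mathrm{Bd}\,D$, the maps $r\mu_{1}',r\mu_{2}'$ are close to the originals and their images can meet only in $\mathrm{Int}\,D$, where a final adjustment removes all intersections; passing from this disjoint-disks approximation property of $D$ to an actual disjoint pair of taming sets is then the countable-dense-family bookkeeping you sketched. So your reduction to the double is a legitimate (if roundabout, since it routes through the much harder Theorem 6.3) variant of the paper's setup, but as written the proposal lacks the retraction argument that actually does the work, and the substitute you offer in its place is invalid.
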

\begin{proof}
For this proof we regard $D$ as embedded in $S^n$ via its position as a summand in the sewing
space $C \cup_h D = S^n$.  The hypothesis that $(C,D,h) \in \mathscr{W}_n$ means
that there is a retraction $r:S^n \to D$ such that $r^{-1}(\text{Bd }D) = \text{Bd }D$ ---
simply apply the map $f_h:C \to D$ associated with $h$ to the other summand
$C$ of the sewing space.

It suffices to show that any two maps $\mu_1, \mu_2:I^2 \to D$ can be
approximated by maps with disjoint images.  Such maps, regarded as maps
into $S^n$, can be approximated by maps $\mu'_1,\mu'_2:I^2 \to S^n$ with
disjoint images. If these approximations protrude only slightly into 
Int $C$, $r \mu'_1, r \mu'_2$ will be close to $\mu_1$ and $\mu_2$, respectively.
Their images intersect only at points of $\text{Int }D$, so a final 
adjustment over Int $D$  eliminates all intersections, just as in the proof of Theorem 6.3.
\end{proof}

\begin{corollary} 
If $(C,D,h) \in \mathscr{W}_n$  and $C \cup_{\mathbbm{1}} C=S^n$,  $n\geq 5$, then $D \cup_{\mathbbm{1}} D=S^n$.
\end{corollary}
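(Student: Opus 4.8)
The plan is to obtain this as the diagonal special case of Corollary 6.5. In that corollary I am free to choose the two input triples from $\mathscr{W}_n$ independently, so I would take $(C_1,D_1,h_1)=(C_2,D_2,h_2)=(C,D,h)$, both of which belong to $\mathscr{W}_n$ by hypothesis. For the sewing I would use $\theta=\mathbbm{1}:\text{Bd }C\to\text{Bd }C$; then the standing assumption $C\cup_{\mathbbm{1}}C=S^n$ is precisely the hypothesis $C_1\cup_\theta C_2=S^n$ demanded by Corollary 6.5. The dimension restriction causes no trouble, since $n\ge 5$ is the same as the $n>4$ appearing in Theorem 6.3 and Corollary 6.5.

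With these choices Corollary 6.5 delivers $D_1\cup_{h_2\theta h_1^{-1}}D_2=S^n$. The remaining step is purely formal: I would track the domains of the composite sewing homeomorphism to confirm it is the identity. Reading $h_1^{-1}:\text{Bd }D\to\text{Bd }C$, then $\theta=\mathbbm{1}:\text{Bd }C\to\text{Bd }C$, then $h_2=h:\text{Bd }C\to\text{Bd }D$, the composite is
$$h_2\theta h_1^{-1}=h\,\mathbbm{1}\,h^{-1}=hh^{-1}=\mathbbm{1}_{\text{Bd }D}.$$
Hence the conclusion of Corollary 6.5 reads $D\cup_{\mathbbm{1}}D=S^n$, which is exactly what is to be proved.

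I expect no genuine obstacle here: the entire substantive content has already been absorbed into Corollary 6.5 (itself a consequence of the Disjoint Disks Property argument of Theorem 6.3). The only points demanding care are bookkeeping ones---verifying that two identical copies of $(C,D,h)$ may legitimately be fed into Corollary 6.5, checking that $\theta=\mathbbm{1}$ is well defined on the common boundary $\text{Bd }C$, and confirming that the resulting composite sewing map collapses to the identity on $\text{Bd }D$. Once those are in place the statement follows at once, so the proof amounts to a single invocation of Corollary 6.5 in its symmetric case.
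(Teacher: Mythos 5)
Your proposal is correct and matches what the paper intends: the statement is given there without proof precisely because it is the diagonal specialization of the two-triple sewing corollary (numbered Corollary 6.4 in the paper, not 6.5), taking $(C_1,D_1,h_1)=(C_2,D_2,h_2)=(C,D,h)$ and $\theta=\mathbbm{1}$, so that the composite sewing $h_2\theta h_1^{-1}=h\,\mathbbm{1}\,h^{-1}$ collapses to the identity on $\mathrm{Bd}\,D$. Aside from the off-by-one citation label, your bookkeeping---the legitimacy of repeating the same triple, the identification $n\geq 5\Leftrightarrow n>4$, and the collapse of the sewing map---is exactly right.
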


\begin{corollary} 
Suppose $(C,D,h)$ and $(D,C,h^{-1}) \in \mathscr{W}_n$.  Then  $C \cup_h D = S^n$  
if and only if $h$ satisfies the Mismatch Property.
\end{corollary}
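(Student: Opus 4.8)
The plan is to treat the two implications separately. The backward implication is the already-known direction: as the remark following Theorem 6.1 records, the homotopy taming set mismatch feature---the existence of taming sets $T$ for $C$ and $T'$ for $D$ with $h(T) \cap T' = \emptyset$---is a sufficient condition for a sewing of crumpled cubes to yield $S^n$ (Eaton's characterization in dimension $3$, and a Disjoint Disks Property argument for the sewing space when $n \geq 5$). I would simply invoke that sufficiency; it uses neither of the $\mathscr{W}_n$ hypotheses and holds for any sewing meeting the mismatch condition.

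The forward implication is where the symmetric hypothesis does the real work, so this is the direction I would present carefully. Assuming $C \cup_h D = S^n$, I first apply Theorem 6.4 to $(C,D,h)$ to obtain a pair of \emph{disjoint} homotopy taming sets $T_1$ and $T_2$ inside the single crumpled cube $D$. I then bring in the second hypothesis $(D,C,h^{-1}) \in \mathscr{W}_n$: by Theorem 3.5 the set $h^{-1}(T_2)$ is a homotopy taming set for $C$. Taking $T = h^{-1}(T_2)$ and $T' = T_1$ gives
$$h(T) \cap T' = h\big(h^{-1}(T_2)\big) \cap T_1 = T_2 \cap T_1 = \emptyset,$$
which is exactly the Mismatch Property for $h$.

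The crux---and the only genuine obstacle---is recognizing that the reverse membership $(D,C,h^{-1}) \in \mathscr{W}_n$ is precisely the device that transports one of $D$'s two disjoint taming sets back across to $C$. Without that hypothesis, Theorem 6.4 supplies disjoint taming sets confined to $D$ alone, with no mechanism to distribute them between the two boundaries and realize a mismatch; this is exactly why, for $n \geq 5$, the mismatch condition is in general only sufficient and not necessary. The symmetric membership restores necessity, which is what upgrades the implication to a biconditional. I would also flag the dimension bookkeeping: the appeal to Theorem 6.4 confines the forward argument to $n \geq 5$, whereas the $n = 3$ case already follows from Eaton's characterization, itself a biconditional requiring no $\mathscr{W}_n$ hypothesis.
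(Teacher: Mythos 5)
Your proposal is correct, and its backward implication is handled exactly as the paper handles it (citing the known sufficiency of the Mismatch Property, which needs no $\mathscr{W}_n$ hypothesis). For the forward implication, though, you take a genuinely different and more modular route than the paper. The paper's proof re-runs the geometric argument of Theorem 6.5 using \emph{two} retractions of $S^n = C \cup_h D$ --- one onto $D$ induced by $f_h$ and one onto $C$ induced by $f_{h^{-1}}$, each 1-1 over the common boundary --- and extracts the mismatched taming sets directly from that double general-position argument. You instead invoke Theorem 6.5 as a black box (which consumes only the hypothesis $(C,D,h) \in \mathscr{W}_n$) to obtain disjoint homotopy taming sets $T_1, T_2 \subset \mathrm{Bd}\,D$, and then feed the second hypothesis $(D,C,h^{-1}) \in \mathscr{W}_n$ into Theorem 3.5 to transport $T_2$ to the homotopy taming set $h^{-1}(T_2)$ for $C$; the computation $h\big(h^{-1}(T_2)\big) \cap T_1 = T_2 \cap T_1 = \emptyset$ is then precisely the Mismatch Property. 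Your route buys brevity and modularity: no geometric argument is repeated, and the role of the symmetric hypothesis is isolated in a purely formal transport step. The paper's route buys independence from Theorem 3.5 and makes visible how both retractions act on a pair of singular disks. Two small corrections: the result you want is Theorem 6.5, not 6.4 (Corollary 6.4 is the two-sewings statement); and your dimension remark is apt --- both your argument and the paper's establish the forward direction only for $n \geq 5$, with $n = 3$ covered by Eaton's characterization, a restriction the corollary's statement suppresses.
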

\begin{proof}
It is well-known \cite{CannonDaverman 1}\cite{Daverman 5}  that $C \cup_h D = S^n$ if $h$ satisfies the Mismatch
Property. The reverse implication follows from an argument similar to
the one given here for Theorem 6.5, due to the existence of retractions of $S^n = C \cup_h D$  to both $C$ and $D$ that are 1-1 over the boundaries. 
\end{proof}

\begin{corollary} 
Suppose $G$ is a u.s.c. decomposition of $S^n$ such that for any $p\in \pi(H_G)$ and open set $U$ containing $p$ there 
is an open set $V$ such that $p\in V \subset U$ and $\mathrm{Bd}$ $V$ is an $(n-1)$-sphere which misses $\pi(H_G)$; 
suppose $\big(V,\mathrm{Cl}$ $(S^n-V),\mathbbm{1}\big)$ and $\big(\mathrm{Cl}$ $(S^n-V),V,\mathbbm{1}\big)$ are in
$\mathscr{W}_n$. Then $G$ is shrinkable and $S^n/G$ is homeomorphic to $S^n$.
\end{corollary}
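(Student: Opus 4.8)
The plan is to confirm the hypotheses of Edwards's Cell-Like Approximation Theorem \cite{Edwards} (the same tool invoked for Theorem 6.3), which, for a cell-like upper semicontinuous decomposition $G$ of $S^n$ with $n\ge 5$, asserts that $\pi$ is approximable by homeomorphisms --- equivalently, that $G$ is shrinkable --- precisely when $S^n/G$ is finite-dimensional and has the Disjoint Disks Property, and which in that event also yields $S^n/G\cong S^n$. Throughout I read each neighborhood $V$ as (the image of) a $G$-saturated open subset of $S^n$, so that $\mathrm{Bd}\,V$ lifts to an honest $(n-1)$-sphere of singletons and the two crumpled $n$-cubes it bounds, $C_1=\mathrm{Cl}\,V$ and $C_2=\mathrm{Cl}(S^n-V)$, satisfy $C_1\cup_{\mathbbm{1}}C_2=S^n$.

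First I would record that $G$ is cell-like: in the framework of this section its nondegenerate elements are the point and arc fibers of a cell-like decomposition, and each element $g=\pi^{-1}(p)$ is located inside the arbitrarily small crumpled cubes $\mathrm{Cl}\,V_k$ furnished by the hypothesis as $\pi(V_k)\downarrow\{p\}$. Finite-dimensionality of $S^n/G$ then follows exactly as in the corresponding claim of Example 5.2: off $\pi(H_G)$ the map $\pi$ is injective, while every point of $\pi(H_G)$ has arbitrarily small neighborhoods whose frontiers are $(n-1)$-spheres over which $\pi$ is one-to-one, and this local separation by genuine $(n-1)$-spheres forces $\dim(S^n/G)\le n$.

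The heart of the matter is the Disjoint Disks Property for $S^n/G$, and this is where both membership statements of the hypothesis are needed. For each chosen $V$ we have $C_1\cup_{\mathbbm{1}}C_2=S^n$ automatically, so Theorem 6.5 applied to $(C_1,C_2,\mathbbm{1})$ shows $C_2$ contains two disjoint homotopy taming sets, and applied to $(C_2,C_1,\mathbbm{1})$ shows the same for $C_1$; hence both crumpled cubes abutting every such sphere enjoy the Disjoint Disks Property. To establish the property for the quotient I would take two maps $\mu_1,\mu_2:I^2\to S^n/G$ and run the six-step approximation scheme from the proof of Theorem 6.3: general position handles intersections lying in the manifold part $S^n/G-\pi(H_G)$, the intersections meeting $\pi(H_G)$ are corralled into a null family of small disks carried into the images $\pi(V)$ of the sphere-bounded neighborhoods, and within each such region the disjoint homotopy taming sets of $C_1$ and $C_2$ --- transported by the Controlled Homotopy Extension Theorem (Theorem 6.2) --- are used to push the two singular disks off one another across $\pi(\mathrm{Bd}\,V)$ and finally off $\pi(H_G)$ altogether.

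I expect the principal obstacle to be precisely this local-to-global assembly of disjointness. As in Theorem 6.3, the covering disks and the sizes of the accompanying homotopies must be organized into null sequences with progressively stringent metric controls, so that the countably many local modifications converge to honest maps, after which a last general-position adjustment confined to the manifold part removes the residual intersections. Once the Disjoint Disks Property is in hand, Edwards's theorem simultaneously delivers the shrinkability of $G$ and the homeomorphism $S^n/G\cong S^n$. (As with Theorem 6.5 this argument presumes $n\ge 5$; the remaining low-dimensional cases are governed by the classical theory.)
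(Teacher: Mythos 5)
Your proposal takes a fundamentally different route from the paper, and the route you chose has a real gap. The paper's proof is essentially a citation: the sphere $\pi^{-1}(\mathrm{Bd}\,V)$ bounds two crumpled cubes in $S^n$ whose identity sewing trivially yields $S^n$; since by hypothesis both $(\mathrm{Cl}\,V,\mathrm{Cl}(S^n-V),\mathbbm{1})$ and $(\mathrm{Cl}(S^n-V),\mathrm{Cl}\,V,\mathbbm{1})$ lie in $\mathscr{W}_n$, the corollary characterizing sewings under two-sided $\mathscr{W}_n$ membership (Corollary 6.7 as compiled here, cited in the paper as Corollary 6.5) upgrades ``the sewing is $S^n$'' to ``$\mathbbm{1}$ satisfies the Mismatch Property.'' That Mismatch condition on the small spheres is precisely the hypothesis of the main theorem of \cite{Gu 1}, which then delivers shrinkability and $S^n/G\cong S^n$. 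Note that what feeds Gu's theorem is the Mismatch Property --- a homotopy taming set for one cube disjoint from a homotopy taming set for the \emph{other} --- which is not the conclusion of Theorem 6.5 that you invoke (disjoint taming sets within a single cube); those are different conditions. By bypassing \cite{Gu 1} you are in effect undertaking to reprove its main theorem, which is a paper's worth of work, not a corollary-length argument.

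The concrete gap is your Disjoint Disks Property step. The six-step scheme of Theorem 6.3 is tailored to a sewing space: two honest crumpled cubes glued along a single boundary sphere, each an ANR carrying homotopy taming sets. The space $S^n/G$ is not of that form: inside every neighborhood $\pi(\mathrm{Cl}\,V)$ there remain further points of $\pi(H_G)$ at all smaller scales, so $\pi(\mathrm{Cl}\,V)$ is a cell-like quotient of the crumpled cube $\pi^{-1}(\mathrm{Cl}\,V)$, not a crumpled cube, and the taming sets (or the Disjoint Disks Property) of the cubes upstairs are statements about disks mapped into those cubes, not into their quotients. The Controlled Homotopy Extension Theorem does not lift maps through the unresolved quotient map, and approximate lifting via fine homotopy equivalence would presuppose ANR/shrinkability-type information that is part of what must be proved; moreover, even disjoint lifted disks in $S^n$ can have intersecting images in $S^n/G$ if they pierce the same element of $G$. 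Your clause ``push the two singular disks off one another across $\pi(\mathrm{Bd}\,V)$ and finally off $\pi(H_G)$ altogether'' therefore conceals the entire content of the theorem: once disks can be pushed off $\pi(H_G)$, the DDP is immediate from general position in the open manifold $S^n/G-\pi(H_G)\cong S^n-N_G$. Two smaller corrections: the elements of $G$ are not ``point and arc fibers'' (that phrase describes the sewing decompositions at the start of Section 6, not this arbitrary u.s.c.\ $G$); cell-likeness instead follows because each nondegenerate element is a nested intersection of the crumpled cubes $\pi^{-1}(\mathrm{Cl}\,V_k)$, which are contractible ANRs. And the cleanest finite-dimensionality argument is that $\pi(H_G)$ is $0$-dimensional, since each of its points has arbitrarily small neighborhoods whose frontiers miss $\pi(H_G)$.
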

\begin{proof}
Apply the main theorem of \cite{Gu 1} and Corollary 6.5.
\end{proof}

\end{document}